\NeedsTeXFormat{LaTeX2e}

\documentclass[a4paper]{amsart}
\usepackage{amssymb} 
\usepackage[T1]{fontenc}
\usepackage[latin1]{inputenc}
\usepackage{amsfonts}
\usepackage{amsxtra}
\usepackage{ae}
\usepackage[all]{xy}
\usepackage{enumerate}
\usepackage{url}
\usepackage{verbatim}
\usepackage{pgf,tikz}
\usepackage{tikz-cd}
\usepackage{stmaryrd}

\usetikzlibrary{arrows,patterns}

\include{diagram}

\newcommand*{\ket}{\rangle}
\newcommand*{\bra}{\langle}
\newcommand*{\A}{\mathcal{A}}

\newcommand*{\C}{\mathcal{C}}

\newcommand*{\G}{\mathcal{G}}
\renewcommand*{\H}{\mathcal{H}}

\renewcommand*{\O}{\mathcal{O}}

\newcommand*{\W}{\mathcal{W}}

\newcommand*{\Poly}{\mathcal{O}}
\newcommand*{\FC}{\mathbb{F}C}
\newcommand*{\FO}{\mathbb{F}\mathcal{O}}
\newcommand*{\Rep}{\mathsf{Rep}}

\DeclareMathOperator{\ad}{ad}

\DeclareMathOperator{\End}{End}
\DeclareMathOperator{\Aut}{Aut}

\DeclareMathOperator{\tr}{tr}
\DeclareMathOperator{\Tr}{Tr}
\DeclareMathOperator{\id}{id}

\newenvironment{bnum}
{\begin{list}{}
    {\setlength{\labelwidth}{15pt}
     \setlength{\leftmargin}{\labelwidth}
    }
}
{\end{list}}

\numberwithin{equation}{section}
\theoremstyle{change}
\newtheorem{theorem}{Theorem}[section]
\newtheorem{prop}[theorem]{Proposition}
\newtheorem{lemma}[theorem]{Lemma}

\theoremstyle{definition}
\newtheorem{definition}[theorem]{Definition}
\newtheorem{example}[theorem]{Example}
\newtheorem{remark}[theorem]{Remark}

\begin{document}

\title{Quantum Cuntz-Krieger algebras}

\author{Michael Brannan}
\address{Michael Brannan \\ Department of Mathematics\\ 
Texas A\&M University \\
College Station, TX 77840 \\
USA} 
\email{mbrannan@tamu.edu}

\author{Kari Eifler}
\address{Kari Eifler \\ Department of Mathematics\\ 
Texas A\&M University \\
College Station, TX 77840\\
USA} 
\email{keifler@tamu.edu}

\author{Christian Voigt}
\address{Christian Voigt \\ School of Mathematics and Statistics \\
University of Glasgow \\
University Place \\
Glasgow G12 8QQ \\
United Kingdom}
\email{christian.voigt@glasgow.ac.uk}

\author{Moritz Weber} 
\address{Moritz Weber \\ Saarland University \\
Faculty of Mathematics \\
Postbox 151150 \\
D-66041 Saarbr\"ucken \\
Germany}
\email{weber@math.uni-sb.de}

\begin{abstract}
Motivated by the theory of Cuntz-Krieger algebras we define and study $ C^\ast $-algebras associated to directed quantum graphs. For classical graphs 
the $ C^\ast $-algebras obtained this way can be viewed as free analogues of Cuntz-Krieger algebras, and need not be nuclear. 

We study two particular classes of quantum graphs in detail, namely the trivial and the complete quantum graphs. For the trivial quantum graph on a single 
matrix block, we show that the associated quantum Cuntz-Krieger algebra is neither unital, nuclear nor simple, and does not depend on the size of the matrix 
block up to $ KK $-equivalence. In the case of the complete quantum graphs we use quantum symmetries to show that, in certain cases, the corresponding 
quantum Cuntz-Krieger algebras are isomorphic to Cuntz algebras. These isomorphisms, which seem far from obvious from the definitions, imply in particular 
that these $ C^\ast $-algebras are all pairwise non-isomorphic for complete quantum graphs of different dimensions, even on the level of $ KK $-theory. 
We explain how the notion of unitary error basis from quantum information theory can help to elucidate the situation. 

We also discuss quantum symmetries of quantum Cuntz-Krieger algebras in general. 
\end{abstract} 

\subjclass[2020]{46L55, 46L67, 81P40, 19K35}

\maketitle

\section{Introduction} 

Cuntz-Krieger algebras were introduced in \cite{CKalgebras}, generalizing the Cuntz algebras in \cite{Cuntzalgebras}. 
These algebras have intimate connections with symbolic dynamics, and have been studied intensively in the framework of graph $ C^\ast $-algebras over the 
past decades, thus providing a rich supply of interesting examples \cite{Raeburngraph}. The structure of graph $ C^\ast $-algebras is understood to an impressive 
level of detail, and many properties can be interpreted geometrically in terms of the underlying graphs. Motivated by this success, the original constructions 
and results have been generalized in several directions, including higher rank graphs \cite{KPhigherrankgraphcstar}, Exel-Laca algebras \cite{ExelLacaalgebras} 
and ultragraph algebras \cite{Tomfordeunified}, among others. 

The aim of the present paper is to study a generalization of Cuntz-Krieger algebras of a quite different flavor, based on the concept of a quantum graph. The 
latter notion goes back to work of Erdos-Katavolos-Shulman \cite{EKSrankonesubspaces} and Weaver \cite{Weaverquantumrelations}, and was subsequently developed 
further by Duan-Severini-Winter \cite{DSWnoncommutativegraphs} and Musto-Reutter-Verdon \cite{MRVmorita}. Quantum graphs play an intriguing role in the study 
of the graph isomorphism game in quantum information via their connections with quantum symmetries of graphs \cite{BCEHPSWbigalois}. Moreover, based on the 
use of quantum symmetries, fascinating results on the graph theoretic interpretation of quantum isomorphisms between finite graphs were recently obtained 
by Man\v{c}inska-Roberson \cite{MRplanar}. 

Our main idea is to replace the matrix $ A $ in the definition of the Cuntz-Krieger algebra $ \O_A $ by the quantum adjacency matrix of a directed quantum graph. 
Roughly speaking, this means that the standard generators in a Cuntz-Krieger algebra are replaced by matrix-valued valued partial isometries, with matrix sizes 
determined by the quantum graph, and the Cuntz-Krieger relations are expressed using the quantum adjacency matrix of the quantum graph,  
in analogy to the scalar case. 

An important difference to the classical situation is that the matrix partial isometries are not required to have mutually orthogonal ranges, 
as this condition does not generalize to matrices in a natural way. Therefore, the quantum Cuntz-Krieger algebra of a classical graph is typically not 
isomorphic to an ordinary Cuntz-Krieger algebra, and will often neither be unital nor nuclear. However, we show that free Cuntz-Krieger algebras, or equivalently, 
quantum Cuntz-Krieger algebras associated with classical graphs, are always $ KK $-equivalent to Cuntz-Krieger algebras. 

Our main results concern the quantum Cuntz-Krieger algebras associated with two basic examples of quantum graphs, namely the trivial and complete quantum graphs 
associated to an arbitrary measured finite-dimensional $ C^\ast $-algebra $ (B,\psi) $. The first example we consider in detail, namely the quantum Cuntz-Krieger
algebra of the trivial quantum graph $ TM_N $ on the full matrix algebra $ M_N(\mathbb{C}) $, can be viewed as a non-unital version of Brown's universal algebra
generated by the entries of a unitary $ N \times N $-matrix \cite{Brownext}. For $ N > 1 $, the quantum Cuntz-Krieger algebra of $ TM_N $ is neither unital, 
nuclear nor simple, but it is always $ KK $-equivalent to $ C(S^1) $. We exhibit a description of matrices over this algebra in terms of a free product. The 
second example, namely the quantum Cuntz-Krieger algebra associated to the complete quantum graph $ K(B,\psi) $, is even more intriguing. We show
that this $ C^\ast $-algebra always admits a canonical quotient map onto the Cuntz algebra $ \O_n $, where $ n = \dim(B) $. Moreover, for certain 
quantum complete graphs we are able to show that this map is an isomorphism. This fact, which seems far from obvious from the defining relations, is 
proved using monoidal equivalence of the quantum automorphism groups of the underlying quantum graphs. In particular, our results show that for $ N > 1 $ 
the quantum Cuntz-Krieger algebras of the complete quantum graphs $ K(M_N(\mathbb{C}), \tr) $ are unital, nuclear, simple, and pairwise non-isomorphic, 
even on the level of $ KK $-theory.   

We also discuss how quantum symmetries of directed quantum graphs induce quantum symmetries of their associated quantum Cuntz-Krieger algebras in general. 
This is particularly interesting when one tries to relate quantum Cuntz-Krieger algebras associated to graphs which are quantum isomorphic, as in our 
analysis of the examples mentioned above. In particular, we indicate how the notion of a unitary error basis \cite{Wernerteleportation}, 
which is well-known in quantum information theory, can be used to find finite-dimensional quantum isomorphisms, which in turn induce crossed product relations 
between quantum Cuntz-Krieger algebras. In a sense, the existence of quantum symmetries can be viewed as a substitute for the gauge action which features 
prominently in the study of ordinary Cuntz-Krieger algebras. While there exists a gauge action in the quantum case as well, it seems to be of limited use 
for understanding the structure of quantum Cuntz-Krieger algebras in general. 

Let us briefly explain how the paper is organized. In section \ref{secck} we collect some background material on graphs and their 
associated $ C^\ast $-algebras, and introduce free graph $ C^\ast $ algebras and free Cuntz-Krieger algebras. We show that these algebras are $ KK $-equivalent 
to ordinary graph $ C^\ast $-algebras and Cuntz-Krieger algebras, respectively. After reviewing some facts about finite quantum spaces, 
that is, measured finite-dimensional $ C^\ast $-algebras, we define directed quantum graphs in section \ref{secqck}. We then introduce our main object of 
study, namely quantum Cuntz-Krieger algebras. In section \ref{secexamples} we discuss some examples of quantum graphs and their associated $ C^* $-algebras. 
We show that the quantum Cuntz-Krieger algebras associated with classical graphs lead precisely to free Cuntz-Krieger algebras, and look at several
concrete examples of quantum graphs. We also discuss two natural operations on directed quantum graphs, obtained by taking direct sums and tensor products 
of their underlying $ C^\ast $-algebras, respectively. Section \ref{secamplification} is concerned with a general procedure to assign quantum graphs to 
classical graphs, essentially by replacing all vertices with matrix blocks of a fixed size. We analyze the structure of the resulting quantum Cuntz-Krieger 
algebras, and show that they are always $ KK $-equivalent to their classical counterparts. This allows one in particular 
to determine the $ K $-theory of the quantum Cuntz-Krieger algebra of the trivial quantum graph on a matrix algebra mentioned above. 
In section \ref{secquantumsymmetry} we explain how quantum symmetries of quantum graphs induce actions on quantum Cuntz-Krieger algebras. 
We also discuss the canonical gauge action, in analogy to the classical situation. 
The construction of quantum symmetries works in fact at the level of linking algebras associated with arbitrary quantum isomorphisms of quantum graphs. 
This is used together with the some unitary error basis constructions in section \ref{secunitaryerror} to study the structure of 
the quantum Cuntz-Krieger algebras of the trivial and complete quantum graphs associated to a full matrix algebra equipped with its standard trace. In the final 
section \ref{secquantumcomplete} we gather the required results from the preceding sections to furnish a proof of our main theorem for 
quantum Cuntz-Krieger algebras of complete quantum graphs.

Let us conclude with some remarks on notation. The closed linear span of a subset $ X $ of a Banach space is denoted by $ [X] $. If $ F $ is a finite set 
and $ A $ a $ C^\ast $-algebra we shall write $ M_F(A) $ for the $ C^\ast $-algebra of matrices indexed by elements from $ F $ with entries in $ A $. 
We write $ \otimes $ both for algebraic tensor products and for the minimal tensor product of $ C^\ast $-algebras. For operators 
on multiple tensor products we use the leg numbering notation. 

\subsection*{Acknowledgements} The authors are indebted to Li Gao for fruitful discussions on unitary error bases and their connections with quantum isomorphisms. 
MB and KE were partially supported by NSF Grant DMS-2000331. CV and MW were partially supported by SFB-TRR 195 ``Symbolic Tools in Mathematics and their 
Application'' at Saarland University. Parts of this project were completed while the authors participated in the March 2019 Thematic Program 
``New Developments in Free Probability and its Applications'' at CRM (Montreal) and the October 2019 Mini-Workshop ``Operator Algebraic Quantum Groups'' 
at Mathematisches Forschungsinstitut Oberwolfach. The authors gratefully acknowledge the support and productive research environments provided by these institutes.

\section{Cuntz-Krieger algebras} \label{secck}

In this section we review the definition of Cuntz-Krieger algebras and graph $ C^\ast $-algebras \cite{CKalgebras}, \cite{EnomotoWatatanigraph}, \cite{KPRck}, 
\cite{Raeburngraph}, and introduce a free variant of these algebras. Our conventions for graphs and graph $ C^\ast $-algebras will follow \cite{KPRck}.  

\subsection{Graphs} 

A directed graph $ E = (E^0, E^1) $ consists of a set $ E^0 $ of vertices and a set $ E^1 $ of edges, 
together with source and range maps $ s, r: E^1 \rightarrow E^0 $. A vertex $ v \in E^0 $ is called a sink iff $ s^{-1}(v) $ is empty, and a source iff $ r^{-1}(v) $ 
is empty. That is, a sink is a vertex which emits no edges, and a source is a vertex which receives no edges. A self-loop is an edge with $ s(e) = r(e) $. 
The graph $ E $ is called simple if the map $ E^1 \rightarrow E^0 \times E^0, e \mapsto (s(e), r(e)) $ is injective. 

The line graph $ LE $ of $ E $ is the directed graph with vertex set $ EL^0 = E $, edge set 
$$ 
EL^1 = \{(e,f) \mid r(e) = s(f) \} \subset E \times E,  
$$
and the source and range maps $ s, r: LE^1 \rightarrow LE^0 $ given by projection to the first and second coordinates, respectively. 
By construction, the line graph $ LE $ is simple.  

The adjacency matrix of $ E = (E^0, E^1) $ is the $ E^0 \times E^0 $-matrix 
$$
B_E(v,w) = |\{e \in E^1 \mid s(e) = v, r(e) = w \}|, 
$$
and the edge matrix of $ E $ is the $ E^1 \times E^1 $-matrix with entries 
$$
A_E(e,f) = 
\begin{cases} 
1 & r(e) = s(f) \\
0 & \text{else}. 
\end{cases}
$$
Note that the edge matrix $ A_E $ of $ E $ equals the adjacency matrix $ B_{LE} $ of $ LE $. 

We will only be interested in finite directed graphs in the sequel, that is, directed graphs $ E = (E^0,E^1) $ such that both $ E^0 $ and $ E^1 $ are 
finite sets. This requirement can be substantially relaxed \cite{KPRck}.

\subsection{Graph $ C^\ast $-algebras and Cuntz-Krieger algebras} 

We recall the definition of the graph $ C^\ast $-algebra of a finite directed graph $ E = (E^0, E^1) $. 

A Cuntz-Krieger $ E $-family in a $ C^\ast $-algebra $ D $ consists of mutually orthogonal projections $ p_v \in D $ for all $ v \in E^0 $ together with partial 
isometries $ s_e \in D $ for all $ e \in E^1 $ such that 
\begin{bnum}
\item[a)] $ s_e^* s_e = p_{r(e)} $ for all edges $ e \in E^1 $ 
\item[b)] $ p_v = \sum_{s(e) = v} s_e s_e^* $ whenever $ v \in E^0 $ is not a sink. 
\end{bnum}
The graph $ C^\ast $-algebra of $ E $ can then be defined as follows. 
 
\begin{definition} \label{defgraphcstar}
Let $ E = (E^0, E^1) $ be a finite directed graph. The graph $ C^\ast $-algebra $ C^\ast(E) $ is the universal $ C^\ast $-algebra generated by a 
Cuntz-Krieger $ E $-family. We write $ P_v $ and $ S_e $ for the corresponding projections and partial isometries in $ C^\ast(E) $, associated 
with $ v \in E^0 $ and $ e \in E^1 $, respectively.  
\end{definition} 

That is, given any Cuntz-Krieger $ E $-family in a $ C^\ast $-algebra $ D $, with projections $ p_v $ for $ v \in E^0 $ and partial isometries $ s_e $ 
for $ e \in E^1 $, there exists a unique $ \ast $-homomorphism $ \phi: C^\ast(E) \rightarrow D $ such that $ \phi(P_v) = p_v $ and $ \phi(S_e) = s_e $. 

Next we recall the definition of Cuntz-Krieger algebras \cite{CKalgebras}. If $ A \in M_N(\mathbb{Z}) $ is a matrix with entries $ A(i,j) \in \{0,1 \} $
then a Cuntz-Krieger $ A $-family in a $ C^\ast $-algebra $ D $ consists of partial isometries $ s_1, \dots, s_N \in D $ 
with mutually orthogonal ranges such that the Cuntz-Krieger relations 
$$
s_i^* s_i = \sum_{j = 1}^N A(i,j) s_j s_j^*
$$
hold for all $ 1 \leq i \leq N $. 

\begin{definition} \label{defck}
Let $ A \in M_N(\mathbb{Z}) $ be a matrix with entries $ A(i,j) \in \{0,1 \} $. The Cuntz-Krieger algebra $ \O_A $ is the $ C^\ast $-algebra 
generated by a universal Cuntz-Krieger $ A $-family, that is, it is the universal $ C^\ast $-algebra generated by partial isometries $ S_1, \dots, S_N $ 
with mutually orthogonal ranges, satisfying 
$$
S_i^* S_i = \sum_{j = 1}^N A(i,j) S_j S_j^*
$$ 
for all $ 1 \leq i \leq N $.
\end{definition} 

In contrast to \cite{CKalgebras}, we do not make any further assumptions on the matrix $ A $ in Definition \ref{defck} in the sequel, except that it should 
have entries in $ \{0,1\} $. Accordingly, the algebras $ \O_A $ may sometimes be rather degenerate or even trivial, as for instance if $ A = 0 $. 
However, we have adopted this setting for the sake of consistency with our definitions in the quantum case further below. 

If $ E $ is a graph with no sinks and no sources then the graph $ C^\ast $-algebra $ C^\ast(E) $ can be canonically identified with the 
Cuntz-Krieger algebra associated with the edge matrix $ A_E $ of $ E $. 
In particular, the projections in $ C^\ast(E) $ associated to vertices of $ E $ need not be mentioned explicitly in this case. 

We note that the graph $ C^\ast $-algebra of a graph $ E $ with no sinks and no sources is completely determined by the line graph $ LE $ of $ E $, 
keeping in mind that the edge matrix $ A_E $ equals the adjacency matrix $ B_{LE} $, see also \cite{MRSrepresentationsck}. Viewing $ C^\ast(E) $ 
as being associated with the line graph of $ E $ motivates our generalizations further below, where we will replace the matrix $ A $ in Definition \ref{defck} 
with the quantum adjacency matrix of a directed quantum graph.  

\begin{remark}
It is known that all graph $ C^\ast $-algebras of finite directed graphs without sinks are isomorphic to Cuntz-Krieger algebras \cite{ARcorners}. 
\end{remark}

\subsection{Free graph $ C^\ast $-algebras and free Cuntz-Krieger algebras} 

Borrowing terminology from \cite{BSliberation}, we shall now consider ``liberated'' analogues of graph $ C^\ast $-algebras and Cuntz-Krieger algebras. 

In the case of graphs, the input for this construction is a finite directed graph $ E = (E^0, E^1) $ as above. By a free Cuntz-Krieger $ E $-family 
in a $ C^\ast $-algebra $ D $ we shall mean a collection of projections $ p_v \in D $ for all $ v \in E^0 $ together with partial isometries $ s_e \in D $ 
for all $ e \in E^1 $ such that 
\begin{bnum}
\item[a)] $ s_e^* s_e = p_{r(e)} $ for all edges $ e \in E^1 $ 
\item[b)] $ p_v = \sum_{s(e) = v} s_e s_e^* $ whenever $ v \in E^0 $ is not a sink. 
\end{bnum}
That is, the only difference to an ordinary Cuntz-Krieger $ E $-family is that the projections $ p_v $ are no longer required to be mutually orthogonal. 

Stipulating that the $ p_v $ are mutually orthogonal is equivalent to saying that the $ C^\ast $-algebra generated by the projections $ p_v $ 
is commutative. In the same way as in the liberation of matrix groups \cite{BSliberation}, removing commutation relations of this type leads to the following 
free version of the notion of a graph $ C^\ast $-algebra. 

\begin{definition} \label{defliberatedgraphcstar}
Let $ E = (E^0, E^1) $ be a finite directed graph. The free graph $ C^\ast $-algebra $ \FC^\ast(E) $ is the universal $ C^\ast $-algebra generated by a 
free Cuntz-Krieger $ E $-family. We write $ P_v $ and $ S_e $ for the corresponding projections and partial isometries in $ \FC^\ast(E) $, associated 
with $ v \in E^0 $ and $ e \in E^1 $, respectively.  
\end{definition} 

Of course, a similar definition can be made in the Cuntz-Krieger case as well. For the sake of definiteness, let us say that 
a free Cuntz-Krieger $ A $-family in a $ C^\ast $-algebra $ D $, associated with a matrix $ A \in M_N(\mathbb{Z}) $ with entries $ A(i,j) \in \{0,1 \} $, 
consists of partial isometries $ s_1, \dots, s_N \in D $ such that the Cuntz-Krieger relations 
$$
s_i^* s_i = \sum_{j = 1}^N A(i,j) s_j s_j^*
$$
hold for all $ 1 \leq i \leq N $. 
 
\begin{definition} \label{defliberatedck}
Let $ A \in M_N(\mathbb{Z}) $ be a matrix with entries $ A(i,j) \in \{0,1 \} $. The free Cuntz-Krieger algebra $ \FO_A $ is the universal $ C^* $-algebra 
generated by partial isometries $ S_1, \dots, S_N $, satisfying the relations 
$$
S_i^* S_i = \sum_{j = 1}^N A(i,j) S_j S_j^*
$$ 
for all $ i $. 
\end{definition} 

We note that free graph $ C^\ast $-algebras and free Cuntz-Krieger algebras always exist, keeping in mind that the norms of all generators are uniformly 
bounded in any representation of the universal $ \ast $-algebra generated by a free Cuntz-Krieger family. 
Let us also remark that the free graph $ C^* $-algebra of a finite directed graph $ E $ with no sinks and no sources agrees with the free Cuntz-Krieger algebra 
associated with the edge matrix $ A_E $. 

For any finite directed graph $ E $ and any matrix $ A $ as above there are canonical surjective $ \ast $-homomorphisms $ \pi: \FC^\ast(E) \rightarrow C^\ast(E) $
and $ \pi: \FO_A \rightarrow \O_A $, respectively, obtained directly from the universal property. These maps are not isomorphisms in general. 

For instance, if $ E $ is the graph with two vertices and no edges then $ C^\ast(E) = \mathbb{C} \oplus \mathbb{C} $, 
whereas $ \FC^\ast(E) = \mathbb{C} * \mathbb{C} $ is the non-unital free product of two copies of $ \mathbb{C} $. 
However, we note that if $ E $ is the graph with a single vertex and $ N $ self-loops then the canonical projection induces an 
isomorphism $ \FC^\ast(E) \cong C^\ast(E) $, identifying the free graph $ C^\ast $-algebra with the Cuntz algebra $ \O_N $. 

Let us now elaborate on the relation between $ \FC^\ast(E) $ and $ C^\ast(E) $ for an arbitrary finite directed graph $ E $, 
and similarly on the relation between $ \FO_A $ and $ \O_A $. 

\begin{theorem} \label{graphversusfreegraph}
Let $ E $ be a finite directed graph. Then the canonical projection map $ \FC^\ast(E) \rightarrow C^\ast(E) $ is a $ KK $-equivalence. 
Similarly, if $ A \in M_N(\mathbb{Z}) $ is a matrix with entries $ A(i,j) \in \{0,1 \} $ then the canonical projection $ \FO_A \rightarrow \O_A $
is a $ KK $-equivalence. 
\end{theorem}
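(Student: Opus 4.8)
The plan is to establish each projection map as a $ KK $-equivalence by a Yoneda-type argument that isolates the whole difference between the free and ordinary theories into a single model computation for free products of projections. The point to keep in mind throughout is that the free algebras are in general \emph{not} nuclear: already the $ C^\ast $-subalgebra generated by three free projections is a full free product $ \mathbb{C}^{\ast 3} $, which fails to be nuclear. Hence I cannot compute $ K $-theory and invoke the UCT, and must work with genuine $ KK $-classes at every step. I would treat the Cuntz-Krieger case $ \FO_A \to \O_A $ in detail; the graph case $ \FC^\ast(E) \to C^\ast(E) $ is handled by the same argument, the only extra ingredient being the vertex projections attached to sinks and sources, which one may alternatively remove by a desingularization compatible with both constructions and preserving $ KK $-classes.

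First I would pin down the precise difference between a free and an ordinary Cuntz-Krieger $ A $-family: it lies entirely in the orthogonality of the range projections $ S_i S_i^\ast $. Equivalently, the $ C^\ast $-subalgebra generated by the relevant projections is a full (non-unital) free product $ \mathbb{C}^{\ast k} $ in the free case and its abelianized quotient $ \mathbb{C}^{\oplus k} $ in the ordinary case, with the projection map restricting to the canonical surjection $ \pi \colon \mathbb{C}^{\ast k} \to \mathbb{C}^{\oplus k} $. The key lemma is that $ \pi $ is a $ KK $-equivalence. I would prove this by excision for full free products (in the spirit of the quasihomomorphism picture of $ KK $): the $ k $ inclusions $ \mathbb{C} \hookrightarrow \mathbb{C}^{\ast k} $ induce an isomorphism $ KK(\mathbb{C}^{\ast k}, D) \cong \bigoplus_{j=1}^k KK(\mathbb{C}, D) $ for every $ D $. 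Since the same holds for $ \mathbb{C}^{\oplus k} $ via its summand inclusions, and $ \pi $ intertwines the two families, $ \pi^\ast $ is an isomorphism for all $ D $, whence $ \pi $ is invertible in $ KK $ by Yoneda.

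It then remains to propagate this coefficient-level equivalence to the full algebras. For this I would exploit the gauge action of $ \mathbb{T} $ that both $ \FO_A $ and $ \O_A $ carry, with respect to which the projection map is equivariant. Passing to the crossed products by $ \mathbb{T} $, or equivalently to the gauge-invariant core subalgebras, the Pimsner-Voiculescu / Connes-Thom exact sequence is natural, so by the five lemma in $ KK $ it suffices to show that the induced map on cores is a $ KK $-equivalence. The cores are inductive limits of finite stages built from the projection data; at each stage the free-versus-commutative discrepancy is again of full-free-product type and is handled by the lemma, after which continuity of $ KK $ along the inductive limit closes the argument.

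I expect this propagation step to be the main obstacle. The difficulty is that $ \FO_A $ is \emph{not} a Cuntz-Pimsner algebra over $ \mathbb{C}^{\ast k} $ with the naive graph correspondence: the would-be $ \mathbb{C}^{\ast k} $-valued inner product is incompatible with the right action precisely because the projections $ P_v $ are no longer orthogonal. Consequently one cannot feed the free algebra into the standard Pimsner machinery off the shelf, and the comparison exact sequence, the identification of the gauge-invariant cores, and the verification that all connecting maps are natural in $ KK $ (not merely in $ K $-theory) must be set up by hand. Getting this bookkeeping right while never leaving the category of $ KK $-classes, so as to avoid any appeal to the UCT, is the crux of the proof.
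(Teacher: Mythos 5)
Your opening reduction is fine as far as it goes: the canonical surjection from the non-unital free product $ \mathbb{C} * \cdots * \mathbb{C} $ onto $ \mathbb{C} \oplus \cdots \oplus \mathbb{C} $ is indeed a $ KK $-equivalence (this is Cuntz's computation for free products, and the paper draws on the same circle of ideas). The genuine gap is the propagation step, and you have in effect conceded it yourself: knowing that the subalgebras generated by the range projections are $ KK $-equivalent does not, by any general principle, imply that the ambient algebras are. The route you sketch --- equivariance of $ \pi $ for the gauge action, passage to crossed products by $ \mathbb{T} $, Pimsner-Voiculescu and the five lemma, and an identification of the gauge-invariant cores as inductive limits in which ``the discrepancy is again of full-free-product type'' --- is precisely the part that needs proof, and the last claim is the one I do not believe as stated. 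The core of $ \O_A $ is AF because the elements $ S_\mu S_\nu^* $ with $ |\mu| = |\nu| $ form systems of matrix units, and that hinges on the orthogonality of the ranges $ S_i S_i^* $. In $ \FO_A $ that orthogonality fails, and the failure contaminates every graded piece: the finite stages of the core are not free products of matrix algebras over the degree-zero data, and there is no evident reason they should be inductive limits of anything whose comparison with the classical stages reduces to your key lemma. Without that, the five-lemma argument has nothing to run on. (The paper itself warns, in its discussion of gauge actions, that the fixed-point algebras in the free and quantum settings have a more complicated structure and that the gauge action is of limited use for structural questions.)

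The paper's actual proof sidesteps all of this with a single observation that your proposal is missing: if you place the generator $ S_e $ of $ \FC^\ast(E) $ in the $ (s(e), r(e)) $ matrix entry of $ M_{E^0}(\FC^\ast(E)) $ and $ P_v $ in the $ (v,v) $ entry, then the resulting projections are mutually orthogonal \emph{automatically}, because they sit in distinct diagonal corners. This yields a $ \ast $-homomorphism $ \phi: C^\ast(E) \rightarrow M_{E^0}(\FC^\ast(E)) $ going the ``wrong'' way, and standard rotation homotopies show that $ M_{E^0}(\pi) \circ \phi $ and $ \phi \circ \pi $ are each homotopic to corner embeddings. Hence $ \pi $ becomes invertible under any homotopy-invariant, matrix-stable functor --- in particular in $ KK $ --- with no appeal to gauge actions, cores, or exact sequences. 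If you want to salvage your outline, this diagonal-matrix trick is the missing idea; the coefficient-level lemma alone will not carry you to the conclusion.
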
 

\begin{proof} 
The proof is analogous for graph algebras and Cuntz-Krieger algebras, therefore we shall restrict attention to the case of graph algebras. 

Adapting a well-known argument from \cite{Cuntzfreeproduct}, 
we will show more generally that $ C^\ast(E) $ and $ \FC^\ast(E) $ cannot be distinguished by any homotopy invariant functor on the category 
of $ C^\ast $-algebras which is stable under tensoring with finite matrix algebras. 

Firstly, we claim that there exists a $ \ast $-homomorphism $ \phi: C^\ast(E) \rightarrow M_{E^0}(\FC^\ast(E)) $ satisfying 
\begin{align*} 
\phi(P_v)_{xy} &= \delta_{x,v} \delta_{y,v} P_v, \\
\phi(S_e)_{xy} &= \delta_{x, s(e)} \delta_{y, r(e)} S_e 
\end{align*}
for $ v \in E^0 $ and $ e \in E^1 $. For this it suffices to show that the elements $ \phi(P_v), \phi(S_e) $ in $ M_{E^0}(\FC^\ast(E)) $ given by the above 
formulas define a Cuntz-Krieger $ E $-family. Clearly, the elements $ P_v $ are mutually orthogonal projections, 
and the elements $ \phi(S_e) $ are partial isometries such that 
$$
(\phi(S_e)^* \phi(S_e))_{xy} = \delta_{x, r(e)} \delta_{y, r(e)} S_e^* S_e = \delta_{x, r(e)} \delta_{y, r(e)} P_{r(e)} = \phi(P_{r(e)})_{xy} 
$$
and 
\begin{align*}
\phi(P_v)_{xy} = \delta_{x, v} \delta_{y, v} P_v 
&= \delta_{x, v} \delta_{y, v} \sum_{s(f) = v} S_f S_f^* \\ 
&= \sum_{s(f) = v} \delta_{x, s(f)} \delta_{y, s(f)}S_f S_f^* \\ 
&= \sum_{s(f) = v} \sum_{z \in E^0} \phi(S_f)_{xz} (\phi(S_f)_{yz})^* \\
&= \sum_{s(f) = v} (\phi(S_f) \phi(S_f)^*)_{xy}
\end{align*}
if $ v \in E^0 $ is not a sink, as required. 

Recall that we write $ \pi: \FC^\ast(E) \rightarrow C^*(E) $ for the canonical projection. 
Fixing a vertex $ w \in E^0 $, we claim that $ M_{E^0}(\pi) \circ \phi $ is homotopic to the embedding $ \iota $ of $ C^\ast(E) $ 
into the corner of $ M_{E^0}(C^\ast(E)) $ corresponding to $ w $. 
For this we consider the $ \ast $-homomorphisms $ \mu_t: C^\ast(E) \rightarrow M_{E^0}(C^\ast(E)) $ for $ t \in [0,1] $ given by 
$$
\mu_t(P_v) = u^v_t \iota(P_v) (u^v_t)^*, \qquad \mu_t(S_e) = u^{s(e)}_t \iota(S_e) (u^{r(e)}_t)^*, 
$$
where $ u^x_t $ for $ x \in E^0 $ with $ x \neq w $ is the rotation matrix 
$$
u_t = 
\begin{pmatrix} 
\cos(2 \pi t) & \sin(2 \pi t) \\
-\sin(2 \pi t) & \cos(2 \pi t) 
\end{pmatrix} 
$$
placed in the block corresponding to the indices $ w $ and $ x $, and $ u^x_t = \id $ for $ x = w $. In a similar way as above one checks that $ \mu_t $ 
preserves the relations for $ C^\ast(E) $. Indeed, the elements $ \mu_t(P_v) $ are mutually orthogonal projections 
since $ P_v, P_w $ for $ v \neq w $ are orthogonal in $ C^\ast(E) $ and the unitaries $ u^x_t $ have scalar entries. 
Moreover, for $ t \in [0,1] $ the elements $ \mu_t(S_e) $ are partial isometries such that 
$$
\mu_t(S_e)^* \mu_t(S_e) = u^{r(e)}_t \iota(S_e^* S_e) (u^{r(e)}_t)^* = u^{r(e)}_t \iota(P_{r(e)}) (u^{r(e)}_t)^* = \mu_t(P_{r(e)}),  
$$
and 
\begin{align*}
\mu_t(P_v) &= u^v_t \iota(P_v) (u^v_t)^* 
= \sum_{s(f) = v} u^{s(f)}_t \iota(S_f S_f^*) (u^{s(f)}_t)^* 
= \sum_{s(f) = v} \mu_t(S_f) \mu_t(S_f)^*
\end{align*}
if $ v $ is not a sink. By construction we have $ \mu_0 = \iota $ and $ \mu_1 = M_{E^0}(\pi) \circ \phi $. 

The composition $ \phi \circ \pi $ looks the same as $ M_{E^0}(\pi) \circ \phi $ on generators, and a similar homotopy shows that $ \phi \circ \pi $
is homotopic to the embedding $ \FC^\ast(E) \rightarrow M_{E^0}(\FC^\ast(E)) $ associated with a fixed vertex $ w $. This finishes the proof. 
\end{proof}

\section{Quantum Cuntz-Krieger algebras} \label{secqck} 

In this section we define our quantum analogue of Cuntz-Krieger algebras. Since the input for this construction is the quantum adjacency 
matrix of a directed quantum graph, we shall first review the concept of a quantum graph. 

\subsection{Quantum graphs} \label{parqgraph}

The notion of a quantum graph has been considered with some variations by a number of authors, see \cite{EKSrankonesubspaces}, \cite{Weaverquantumrelations}, 
\cite{DSWnoncommutativegraphs}, \cite{MRVmorita}, \cite{BCEHPSWbigalois}. We will follow the approach in \cite{MRVmorita}, \cite{BCEHPSWbigalois}, and adapt 
it to the setting of directed graphs. 

Assume that $ B $ is a finite dimensional $ C^\ast $-algebra $ B $ and let $ \psi: B \rightarrow \mathbb{C} $ be a faithful state. We 
denote by $ L^2(B) = L^2(B, \psi) $ the Hilbert space obtained by equipping $ B $ with the inner product $ \bra x, y \ket = \psi(x^* y) $. 
Moreover let us write $ m: B \otimes B \rightarrow B $ for the multiplication map of $ B $ and $ m^* $ for its adjoint, noting that $ m $ can be 
viewed as a linear operator $ L^2(B) \otimes L^2(B) \rightarrow L^2(B) $. 

If $ B = C(X) $ is the algebra of functions on a finite set $ X $ then states on $ B $ correspond to probability measures on $ X $. 
The most natural choice is to take for $ \psi $ the state corresponding to the uniform measure in this case. 
For an arbitrary finite dimensional $ C^\ast $-algebra $ B $ we have the following well-known condition, singling out certain natural choices among all possible 
states on $ B $ in a similar way \cite{Banicafusscatalan}. 

\begin{definition} \label{deffqs}
Let $ B $ be a finite dimensional $ C^\ast $-algebra and $ \delta > 0 $. A faithful state $ \psi: B \rightarrow \mathbb{C} $ is called a $ \delta $-form 
if $ m m^* = \delta^2 \id $. 
By a finite quantum space $ (B, \psi) $ we shall mean a finite dimensional $ C^\ast $-algebra $ B $ together with 
a $ \delta $-form $ \psi: B \rightarrow \mathbb{C} $. 
\end{definition} 

If $ B $ is a finite dimensional $ C^\ast $-algebra then we have $ B \cong \bigoplus_{a = 1}^d M_{N_a}(\mathbb{C}) $ for some $ N_1, \dots, N_d $. 
A state $ \psi $ on $ B $ can be written uniquely in the form  
$$ 
\psi(x) = \sum_{a = 1}^d \Tr(Q_{(a)} x_i)  
$$
for $ x = (x_1, \dots, x_d) $, where the $ Q_{(a)} \in M_{N_a}(\mathbb{C}) $ are positive matrices satisfying $ \sum_{a = 1}^d \Tr(Q_{(a)}) = 1 $. 
Then $ \psi $ is a $ \delta $-form iff $ Q_{(a)} $ is invertible and $ \Tr(Q_{(a)}^{-1}) = \delta^2 $ for all $ a $. Here $ \Tr $ denotes the natural 
trace, given by summing all diagonal terms of a matrix. 

Note that we may assume without loss of generality that all matrices $ Q_{(a)} $ in the definition of $ \psi $ are diagonal. We shall say that $ (B, \psi) $ 
as above is in standard form in this case. 

Any finite dimensional $ C^\ast $-algebra $ B $ admits a unique tracial $ \delta $-form for a uniquely determined value of $ \delta $. Explicitly, 
this is the tracial state given by  
$$
\tr(x) = \frac{1}{\dim(B)} \sum_{a = 1}^d N_a \Tr(x_i), 
$$
and we have $ \delta^2 = \dim(B) $. Note that if $ B = C(X) $ is commutative then this corresponds to the uniform measure on $ X $, and $ \delta^2 $ is the 
cardinality of $ X $. 

For later purposes it will be useful to record an explicit formula for the adjoint of the multiplication map in a finite quantum space. 

\begin{lemma} \label{mstarcomputation}
Let $ (B, \psi) $ be a finite quantum space in standard form as described above, and consider the linear basis of $ B $ given by the adapted matrix units 
$$
f^{(a)}_{ij} = (Q_{(a)}^{-1/2})_{ii} e^{(a)}_{ij} (Q_{(a)}^{-1/2})_{jj}, 
$$
where $ e^{(a)}_{ij} $ in $ M_{N_a}(\mathbb{C}) $ are the standard matrix units. 
Then we have $ (f^{(a)}_{ij})^* = f^{(a)}_{ji} $ and 
$$
m^*(f^{(a)}_{ij}) = \sum_k f^{(a)}_{ik} \otimes f^{(a)}_{kj}
$$
for all $ a,i,j $. 
\end{lemma}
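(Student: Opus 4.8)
The plan is to verify the two asserted identities directly in terms of the diagonal entries of the $ Q_{(a)} $, exploiting that standard form makes each $ Q_{(a)} $ diagonal. Writing $ Q_{(a)} = \diag(q^{(a)}_1, \dots, q^{(a)}_{N_a}) $ with $ q^{(a)}_i > 0 $, we have $ (Q_{(a)}^{-1/2})_{ii} = (q^{(a)}_i)^{-1/2} $ and hence $ f^{(a)}_{ij} = (q^{(a)}_i q^{(a)}_j)^{-1/2}\, e^{(a)}_{ij} $. The first identity is then immediate: the scalar prefactor is real and positive and $ (e^{(a)}_{ij})^* = e^{(a)}_{ji} $, so $ (f^{(a)}_{ij})^* = (q^{(a)}_i q^{(a)}_j)^{-1/2}\, e^{(a)}_{ji} = f^{(a)}_{ji} $.

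For the second identity I would not derive $ m^* $ from scratch, but rather check the proposed formula against the defining property of an adjoint. Since $ m^* $ is characterized by $ \langle m^*(z), x \otimes y\rangle = \langle z, xy\rangle = \psi(z^* xy) $ for all $ x,y \in B $, and since the elementary tensors $ f^{(b)}_{pq} \otimes f^{(c)}_{rs} $ span $ L^2(B) \otimes L^2(B) $, it suffices to prove
\[
\sum_k \langle f^{(a)}_{ik} \otimes f^{(a)}_{kj},\, f^{(b)}_{pq}\otimes f^{(c)}_{rs}\rangle = \langle f^{(a)}_{ij},\, f^{(b)}_{pq} f^{(c)}_{rs}\rangle
\]
for all choices of indices. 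The two computational inputs are the following. First, $ \psi(e^{(a)}_{jl}) = \Tr(Q_{(a)} e^{(a)}_{jl}) = q^{(a)}_j\, \delta_{jl} $, which upon expanding $ f $ in terms of $ e $ yields the orthogonality relation
\[
\langle f^{(a)}_{ij}, f^{(b)}_{kl}\rangle = \delta_{ab}\, \delta_{ik}\, \delta_{jl}\, (q^{(a)}_i)^{-1};
\]
these basis vectors are thus \emph{orthogonal but not normalized}. Second, the multiplication rule
\[
f^{(b)}_{pq} f^{(c)}_{rs} = \delta_{bc}\, \delta_{qr}\, (q^{(b)}_q)^{-1}\, f^{(b)}_{ps},
\]
obtained from $ e^{(b)}_{pq} e^{(b)}_{rs} = \delta_{qr}\, e^{(b)}_{ps} $.

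Substituting the first relation into the left-hand side makes the sum over $ k $ collapse, since the factors $ \delta_{kq}\delta_{kr} $ force $ k = q = r $, and substituting the second into the right-hand side reduces it to a single inner product. Both sides then equal
\[
\delta_{ab}\, \delta_{ac}\, \delta_{ip}\, \delta_{qr}\, \delta_{js}\, (q^{(a)}_i)^{-1}(q^{(a)}_q)^{-1},
\]
which completes the verification. The calculation is routine; the one place that requires care, and the main source of potential error, is the bookkeeping of the normalization weights $ (q^{(a)}_i)^{-1} $. Because the adapted matrix units are not orthonormal, one cannot read off $ m^* $ by naive pairing against an orthonormal basis, and it is essential to track exactly which diagonal entry of $ Q_{(a)} $ appears on each side. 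Indeed, the content of the lemma is precisely that the factors built into the definition of $ f^{(a)}_{ij} $ are chosen so that these weights cancel and $ m^* $ takes the clean, weight-free comultiplication form stated.
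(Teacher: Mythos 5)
Your proof is correct and follows essentially the same route as the paper: both verify the formula for $m^*(f^{(a)}_{ij})$ by pairing against elementary tensors of adapted matrix units, using the multiplication rule $f^{(b)}_{pq} f^{(c)}_{rs} = \delta_{bc}\delta_{qr}(Q_{(b)}^{-1})_{qq} f^{(b)}_{ps}$ and the normalization $\psi(f^{(a)}_{kl}) = \delta_{kl}$ (which you package as the orthogonality relation $\langle f^{(a)}_{ij}, f^{(b)}_{kl}\rangle = \delta_{ab}\delta_{ik}\delta_{jl}(q^{(a)}_i)^{-1}$). The bookkeeping of the weights checks out, and both sides indeed reduce to $\delta_{ab}\delta_{ac}\delta_{ip}\delta_{qr}\delta_{js}(q^{(a)}_i)^{-1}(q^{(a)}_q)^{-1}$ as you claim.
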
 

\begin{proof} 
Since the matrices $ Q_{(a)} $ are positive we clearly have $ (f^{(a)}_{ij})^* = f^{(a)}_{ji} $. Moreover, observing 
$$ 
f^{(b)}_{rs} f^{(c)}_{pq} = \delta_{bc} (Q_{(b)}^{-1})_{sp} f^{(b)}_{rq} 
$$ 
and $ \psi(f^{(a)}_{kl}) = \delta_{kl} $, we compute 
\begin{align*} 
\bra f^{(a)}_{ij}, m(f^{(b)}_{rs} \otimes f^{(c)}_{pq}) \ket 
&= \delta_{bc} \psi(f^{(a)}_{ji} (Q_{(b)}^{-1})_{sp} f^{(b)}_{rq}) 
= \delta_{abc} (Q_{(a)}^{-1})_{sp} (Q_{(a)}^{-1})_{ir} \delta_{jq}  
\end{align*}
and 
\begin{align*}
\sum_k \bra f^{(a)}_{ik} \otimes f^{(a)}_{kj}, f^{(b)}_{rs} \otimes f^{(c)}_{pq} \ket
&= \sum_k \psi(f^{(a)}_{ki} f^{(b)}_{rs}) \psi(f^{(a)}_{jk} f^{(c)}_{pq}) \\
&= \delta_{ab} \delta_{ac} \sum_k (Q_{(a)}^{-1})_{ir} (Q_{(a)}^{-1})_{kp} \psi(f^{(a)}_{ks}) \psi(f^{(a)}_{jq}) \\
&= \delta_{abc} (Q_{(a)}^{-1})_{ir} (Q_{(a)}^{-1})_{sp} \delta_{jq}.  
\end{align*}
This yields the claim. 
\end{proof}

Let us now discuss the concept of a quantum graph. We shall be interested in directed quantum graphs in the following sense. 

\begin{definition} \label{defdirectedqgraph}
Let $ B $ be a finite dimensional $ C^\ast $-algebra and $ \psi: B \rightarrow \mathbb{C} $ a $ \delta $-form. A linear operator $ A: L^2(B) \rightarrow L^2(B) $ 
is called a quantum adjacency matrix if 
$$ 
m(A \otimes A) m^* = \delta^2 A. 
$$
A directed quantum graph $ \G = (B, \psi, A) $ is a finite quantum space $ (B, \psi) $ together with a quantum adjacency matrix. 
\end{definition} 

In order to explain Definition \ref{defdirectedqgraph} let us consider the case that $ B = C(X) $ is the quantum space associated with a finite set $ X $, 
with $ \psi $ being given by the uniform measure. A linear operator $ A: L^2(B) \rightarrow L^2(B) $ can be identified canonically with a matrix 
in $ M_X(\mathbb{C}) $. Moreover, a straightforward calculation shows that 
$$ 
\frac{1}{|X|} m(A \otimes B) m^* 
$$ 
is the Schur product of $ A, B \in M_X(\mathbb{C}) $, given by entrywise multiplication.  
Hence $ A $ is a quantum adjacency matrix iff it is an idempotent with respect to the Schur product in this case, which is equivalent to saying that $ A $ has 
entries in $ \{0,1\} $. 

According to the above discussion, every simple finite directed classical graph $ E = (E^0, E^1) $ gives rise to a directed quantum graph 
in a natural way. More precisely, if $ A_E $ denotes the adjacency matrix of $ E $ then we obtain a 
directed quantum graph structure on $ B = C(E^0) $ by taking the state $ \psi $ which corresponds to counting measure, 
and the operator $ A: L^2(B) \rightarrow L^2(B) $ given by $ A(e_i) = \sum_j A(i,j) e_j $. 
Conversely, every directed quantum graph structure on a finite dimensional commutative $ C^\ast $-algebra $ B = C(X) $ arises from 
a simple finite directed graph on the vertex set $ X $. 

For a general finite quantum space $ (B, \psi) $ it will be convenient for our considerations further below to write down the quantum adjacency 
matrix condition in terms of bases. 

\begin{lemma} \label{adjacencycomputation}
Let $ (B, \psi) $ be a finite quantum space in standard form. Then a linear operator $ A: L^2(B) \rightarrow L^2(B) $, given by 
$$
A(f^{(a)}_{ij}) = \sum_{brs} A_{ija}^{rsb} f^{(b)}_{rs}
$$ 
in terms of the adapted matrix units, is a directed quantum adjacency matrix iff
$$
\sum_{ks} (Q_{(b)}^{-1})_{ss} A_{ika}^{rsb} A_{kja}^{snb} = \delta^2 A_{ija}^{rnb}
$$
for all $ a,b,i,j,r,n $.  
\end{lemma}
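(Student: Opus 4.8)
The plan is to evaluate the operator $ m(A \otimes A) m^* $ directly on the adapted matrix units $ f^{(a)}_{ij} $, read off its matrix coefficients, and compare them with those of $ \delta^2 A $. Since the $ f^{(a)}_{ij} $ form a linear basis of $ L^2(B) $, the operator identity $ m(A \otimes A) m^* = \delta^2 A $ holds if and only if the two sides agree on each basis vector, which in turn amounts to equality of all matrix coefficients with respect to this basis. The only ingredients needed are the formula for $ m^* $ from Lemma \ref{mstarcomputation}, the multiplication rule $ f^{(b)}_{rs} f^{(c)}_{pq} = \delta_{bc} (Q_{(b)}^{-1})_{sp} f^{(b)}_{rq} $ recorded in its proof, and the fact that each $ Q_{(b)} $ is diagonal, since $ (B,\psi) $ is in standard form.

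First I would apply $ m^* $ to $ f^{(a)}_{ij} $ using Lemma \ref{mstarcomputation}, obtaining $ \sum_k f^{(a)}_{ik} \otimes f^{(a)}_{kj} $. Applying $ A \otimes A $ and expanding each factor via the defining formula $ A(f^{(a)}_{ij}) = \sum_{brs} A_{ija}^{rsb} f^{(b)}_{rs} $ produces a sum of simple tensors $ f^{(b)}_{rs} \otimes f^{(c)}_{pq} $ with coefficients $ A_{ika}^{rsb} A_{kja}^{pqc} $. Next I would apply $ m $, using the multiplication rule to collapse each simple tensor to $ \delta_{bc} (Q_{(b)}^{-1})_{sp} f^{(b)}_{rq} $. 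The factor $ \delta_{bc} $ forces $ c = b $, and diagonality of $ Q_{(b)} $ gives $ (Q_{(b)}^{-1})_{sp} = \delta_{sp} (Q_{(b)}^{-1})_{ss} $, which forces $ p = s $.

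After these substitutions the coefficient of $ f^{(b)}_{rn} $ in $ m(A \otimes A) m^*(f^{(a)}_{ij}) $ becomes exactly $ \sum_{ks} (Q_{(b)}^{-1})_{ss} A_{ika}^{rsb} A_{kja}^{snb} $, while the coefficient of the same basis vector in $ \delta^2 A(f^{(a)}_{ij}) $ is $ \delta^2 A_{ija}^{rnb} $. Equating these over all $ a,b,i,j,r,n $ yields precisely the asserted index identity, and since every step is an equivalence the two conditions match. I do not anticipate any genuine obstacle: the argument is a bookkeeping computation, and the only points requiring a little care are the correct use of diagonality of $ Q_{(b)} $ to simplify $ (Q_{(b)}^{-1})_{sp} $, and keeping track of which summation indices survive once the two Kronecker deltas have been applied.
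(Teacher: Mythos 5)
Your proposal is correct and follows essentially the same route as the paper: both evaluate $ m(A \otimes A) m^* $ on the basis $ f^{(a)}_{ij} $ using Lemma \ref{mstarcomputation} and the product rule $ f^{(b)}_{rs} f^{(c)}_{pq} = \delta_{bc} (Q_{(b)}^{-1})_{sp} f^{(b)}_{rq} $, then compare coefficients of $ f^{(b)}_{rn} $ against $ \delta^2 A $. Your explicit invocation of diagonality of $ Q_{(b)} $ to force $ p = s $ is exactly the step the paper performs implicitly in passing to its final display.
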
 

\begin{proof} 
Using Lemma \ref{mstarcomputation} we calculate 
\begin{align*}
m(A \otimes A) m^*(f^{(a)}_{ij}) &= \sum_k A(f^{(a)}_{ik}) A(f^{(a)}_{kj}) \\
&= \sum_k \sum_{brs} \sum_{cmn} A_{ika}^{rsb} f^{(b)}_{rs} A_{kja}^{mnc} f^{(c)}_{mn} \\
&= \sum_k \sum_{brsn} (Q_{(b)}^{-1})_{ss} A_{ika}^{rsb} A_{kja}^{snb} f^{(b)}_{rn}, 
\end{align*}
so that comparing coefficients yields the claim. 
\end{proof} 

We point out that there is a rich supply of directed quantum adjacency matrices and quantum graphs. Let $ B $ be a finite dimensional $ C^\ast $-algebra and 
let $ \tr $ be the unique tracial $ \delta $-form on $ B $. Every element $ P \in B \otimes B^{op} $ has a Choi-Jamio\l kowski form, that is, there exists a 
unique linear map $ A: B \rightarrow B $ such that 
$$
P = P_A = \frac{1}{\dim(B)} (1 \otimes A)m^*(1),  
$$
where $ m^*: B \rightarrow B \otimes B $ is the adjoint of multiplication with respect to $ \tr $. 
Then $ A $ is a quantum adjacency matrix with respect to $ (B, \tr) $ iff $ P $ is idempotent, that is, iff $ P^2 = P $. 

Moreover, idempotents in $ B \otimes B^{op} $ can be naturally obtained as follows. Assume that $ B \hookrightarrow B(\H) $ is unitally embedded into 
the algebra of bounded operators on some finite dimensional Hilbert space $ \H $, and let $ B' \subset B(\H) $ be the commutant of $ B $. 
Then $ B \otimes B^{op} $ identifies with the space of all completely bounded $ B' $-$ B' $-bimodule maps from $ B(\H) $ to itself. 
In particular, idempotents in $ B \otimes B^{op} $ are the same thing as direct sum decompositions $ B(\H) = S \oplus R $ of $ B' $-$ B' $-bimodules. 

Taking $ B = M_N(\mathbb{C}) $ and the identity embedding into $ B(\mathbb{C}^N) = M_N(\mathbb{C}) $ we see that there is a bijective 
correspondence between quantum graph structures on $ (M_N(\mathbb{C}), \tr) $ and vector space direct sum decompositions $ M_N(\mathbb{C}) = S \oplus R $. 

\begin{remark} 
One could work more generally with arbitrary faithful positive linear functionals $ \psi $ instead of $ \delta $-forms, by modifying the 
defining relation of a quantum adjacency matrix in Definition \ref{defdirectedqgraph} to 
$$ 
m(A \otimes A) m^* = A mm^*. 
$$
We will however restrict ourselves to $ \delta $-forms in the sequel, as this will allow us to remain closer to the classical theory in the 
commutative case. 
\end{remark} 

\begin{remark}
The definition of a quantum graph in \cite{MRVmorita}, \cite{BCEHPSWbigalois} contains further conditions on the quantum adjacency matrix. If $ B = C(X) $ 
is commutative then these conditions correspond to requiring that the matrix $ A \in M_X(\mathbb{C}) $ is symmetric and has entries $ 1 $ 
on the diagonal, respectively. That is, the quantum graphs considered in these papers are undirected and have all self-loops. Neither of these conditions 
is appropriate in connection with Cuntz-Krieger algebras. 
\end{remark}

\subsection{Quantum Cuntz-Krieger algebras} 

Let us now define the quantum Cuntz-Krieger algebra associated to a directed quantum graph. Comparing with the definition of graph $ C^\ast $-algebras, 
we note that the quantum graph used as an input in our definition may be thought of as an analogue of the line graph of a classical graph. 

If $ \G = (B, \psi, A) $ is a directed quantum graph then we shall say that a quantum Cuntz-Krieger $ \G $-family in a $ C^\ast $-algebra $ D $ is a linear 
map $ s: B \rightarrow D $ such that 
\begin{bnum} 
\item[a)] $ \mu_D(\id \otimes \mu_D)(s \otimes s^* \otimes s)(\id \otimes m^*)m^* = s $
\item[b)] $ \mu_D(s^* \otimes s)m^* = \mu_D(s \otimes s^*) m^* A $, 
\end{bnum}
where $ \mu_D: D \otimes D \rightarrow D $ is the multiplication map for $ D $ and $ s^*(b) = s(b^*)^* $ for $ b \in B $. 
We also recall that $ m^* $ denotes the adjoint of the multiplication map for $ B $ with respect to the inner product given by $ \psi $. 

\begin{definition} \label{defqck}
Let $ \G = (B, \psi, A) $ be a directed quantum graph. The quantum Cuntz-Krieger algebra $ \FO(\G) $ is the universal $ C^\ast $-algebra 
generated by a quantum Cuntz-Krieger $ \G $-family $ S: B \rightarrow \FO(\G) $. 
\end{definition} 

In other words, the quantum Cuntz-Krieger algebra $ \FO(\G) $ satisfies the following universal property. If $ D $ is a $ C^\ast $-algebra 
and $ s: B \rightarrow D $ a quantum Cuntz-Krieger $ \G $-family, then there exists a unique $ \ast $-homomorphism $ \varphi: \FO(\G) \rightarrow D $ 
such that $ \varphi(S(b)) = s(b) $ for all $ b \in B $. 

\begin{remark}
We note that Definition \ref{defqck} makes sense for a finite dimensional $ C^\ast $-algebra $ B $ together with a faithful positive linear 
functional $ \psi $ and an arbitrary linear map $ A: L^2(B) \rightarrow L^2(B) $. At this level of generality one can shift information from $ \psi $ into 
the matrix $ A $ and vice versa, without changing the resulting $ C^\ast $-algebra. 
Our definition will allow us to remain closer to the standard setup for Cuntz-Krieger algebras. 
\end{remark}

It is not difficult to check that 
the quantum Cuntz-Krieger algebra $ \FO(\G) $ always exists. This is done most easily by rewriting Definition \ref{defqck} in terms of a linear basis for 
the algebra $ B $. In the sequel we shall say that a directed quantum graph $ \G = (B, \psi, A) $ is in standard form if its underlying finite quantum space 
is, compare paragraph \ref{parqgraph}.  

\begin{prop} \label{qckconcrete}
Let $ \G = (B, \psi, A) $ be a directed quantum graph in standard form, and let 
\begin{align*} 
A(f^{(a)}_{ij}) = \sum_{brs} A_{ija}^{rsb} f^{(b)}_{rs} 
\end{align*}
be the quantum adjacency matrix written in terms of the adapted matrix units as discussed further above. 
Then the quantum Cuntz-Krieger algebra $ \FO(\G) $ identifies with the universal $ C^\ast $-algebra $ \FO_A $ with generators $ S^{(a)}_{ij} $ 
for $ 1 \leq a \leq d $ and $ 1 \leq i,j \leq N_a $, satisfying the relations 
\begin{align*}
\sum_{rs} S^{(a)}_{ir} (S^{(a)}_{sr})^* S^{(a)}_{sj} &= S^{(a)}_{ij} \\
\sum_l (S^{(a)}_{li})^* S^{(a)}_{lj} &= \sum_{brs} A^{rsb}_{ija} \sum_l S^{(b)}_{rl} (S^{(b)}_{sl})^*
\end{align*}
for all $ a,i,j $. 
\end{prop}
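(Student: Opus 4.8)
The plan is to evaluate the two defining conditions of a quantum Cuntz-Krieger $\G$-family on the basis of adapted matrix units and read off precisely the stated relations; the identification of the two universal $C^\ast$-algebras then follows formally from their respective universal properties.

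First I would record the elementary bookkeeping. Setting $S^{(a)}_{ij} = s(f^{(a)}_{ij})$, the linear map $s$ is completely determined by, and in turn determines, the family $\{S^{(a)}_{ij}\}$, since the $f^{(a)}_{ij}$ form a basis of $B$; hence it suffices to check conditions a) and b) on these basis vectors. Using $(f^{(a)}_{ij})^* = f^{(a)}_{ji}$ from Lemma \ref{mstarcomputation} together with the definition $s^*(b) = s(b^*)^*$, one finds $s^*(f^{(a)}_{ij}) = (S^{(a)}_{ji})^*$. The only further ingredient is the formula $m^*(f^{(a)}_{ij}) = \sum_k f^{(a)}_{ik} \otimes f^{(a)}_{kj}$ from the same lemma, which upon one more application yields $(\id \otimes m^*) m^*(f^{(a)}_{ij}) = \sum_{k,l} f^{(a)}_{ik} \otimes f^{(a)}_{kl} \otimes f^{(a)}_{lj}$.

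Next I would substitute these into the two conditions. Applying $(s \otimes s^* \otimes s)$ to the triple tensor above produces $\sum_{k,l} S^{(a)}_{ik} \otimes (S^{(a)}_{lk})^* \otimes S^{(a)}_{lj}$, and collapsing the two multiplications $\mu_D(\id \otimes \mu_D)$ gives $\sum_{k,l} S^{(a)}_{ik}(S^{(a)}_{lk})^* S^{(a)}_{lj}$; equating this with $s(f^{(a)}_{ij}) = S^{(a)}_{ij}$ and relabelling the summation indices yields the first relation. For condition b) the left hand side $\mu_D(s^* \otimes s) m^*(f^{(a)}_{ij})$ becomes $\sum_l (S^{(a)}_{li})^* S^{(a)}_{lj}$, while on the right hand side I would apply $A(f^{(a)}_{ij}) = \sum_{brs} A^{rsb}_{ija} f^{(b)}_{rs}$ first, then $m^*$, then $\mu_D(s \otimes s^*)$, obtaining $\sum_{brs} A^{rsb}_{ija} \sum_l S^{(b)}_{rl}(S^{(b)}_{sl})^*$. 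This matches the second relation exactly.

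Having thereby shown that a map $s$ is a quantum Cuntz-Krieger $\G$-family in a $C^\ast$-algebra $D$ if and only if the elements $S^{(a)}_{ij} = s(f^{(a)}_{ij})$ satisfy the two displayed relations, I would conclude that $\FO(\G)$ and the concretely presented algebra $\FO_A$ represent the same functor on $C^\ast$-algebras and are therefore canonically isomorphic. The computation is entirely routine; the only point requiring genuine care is the index bookkeeping introduced by the $\ast$-twist in $s^*$, which swaps the two indices of each matrix unit and is responsible for the transposed index patterns of the ``inner'' factors in both relations. I would check these transpositions carefully against Lemma \ref{mstarcomputation} to be sure the coefficient $A^{rsb}_{ija}$ and the contractions over the repeated index $l$ end up in the stated positions.
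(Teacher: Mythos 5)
Your proposal is correct and follows essentially the same route as the paper: both evaluate the two defining conditions of a quantum Cuntz--Krieger $\G$-family on the adapted matrix units using Lemma \ref{mstarcomputation}, arrive at exactly the stated relations (with the same index transpositions coming from $s^*$), and then invoke the universal properties to identify $\FO(\G)$ with $\FO_A$. The only cosmetic difference is that you phrase the conclusion as an equivalence of functors, whereas the paper explicitly writes down the two mutually inverse $\ast$-homomorphisms; these are the same argument.
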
 

\begin{proof} 
Let us first consider the elements $ S^{(a)}_{ij} = S(f^{(a)}_{ij}) $ in $ \FO(\G) $. If $ \mu $ denotes the multiplication map for $ \FO(\G) $, then 
according to Lemma \ref{mstarcomputation} we get  
\begin{align*}
\sum_{rs} S^{(a)}_{ir} (S^{(a)}_{sr})^* S^{(a)}_{sj} &= \sum_{rs} S(f^{(a)}_{ir}) S^*(f^{(a)}_{rs}) S(f^{(a)}_{sj}) \\
&= \sum_{rs} \mu(\id \otimes \mu)(S \otimes S^* \otimes S)(f^{(a)}_{ir} \otimes f^{(a)}_{rs} \otimes f^{(a)}_{sj}) \\
&= \mu(\id \otimes \mu)(S \otimes S^* \otimes S)(\id \otimes m^*)m^*(f^{(a)}_{ij}) \\
&= S(f^{(a)}_{ij}) = S^{(a)}_{ij}, 
\end{align*}
and similarly 
\begin{align*}
\sum_r (S^{(a)}_{ri})^* S^{(a)}_{rj} &= \sum_r \mu(S^* \otimes S)(f^{(a)}_{ir} \otimes f^{(a)}_{rj}) \\
&= \mu(S^* \otimes S)m^*(f^{(a)}_{ij}) \\
&= \mu(S \otimes S^*)m^* A(f^{(a)}_{ij}) \\
&= \sum_{brs} A_{ija}^{rsb} \mu(S \otimes S^*)m^* (f^{(b)}_{rs}) \\
&= \sum_{brsl} A_{ija}^{rsb} \mu(S \otimes S^*)(f^{(b)}_{rl} \otimes f^{(b)}_{ls}) \\
&= \sum_{brsl} A_{ija}^{rsb} S^{(b)}_{rl} (S^{(b)}_{sl})^*.  
\end{align*}
Hence, by the definition of $ \FO_A $, there exists a unique $ \ast $-homomorphism $ \phi: \FO_A \rightarrow \FO(\G) $ such 
that $ \phi(S^{(a)}_{ij}) = S(f^{(a)}_{ij}) $ for all $ a,i,j $. 

Conversely, let us define a linear map $ s: B \rightarrow \FO_A $ by $ s(f^{(a)}_{ij}) = S^{(a)}_{ij} $. 
Essentially the same computation as above shows that $ s $ defines a quantum Cuntz-Krieger $ \G $-family in $ \FO_A $, so that there 
exists a unique $ \ast $-homomorphism $ \psi: \FO(\G) \rightarrow \FO_A $ satisfying $ \psi(S(b)) = s(b) $ for all $ b \in B $. 

It is straightforward to check that the maps $ \phi $ and $ \psi $ are mutually inverse isomorphisms. 
\end{proof}

Using matrix notation we can rephrase the relations from Proposition \ref{qckconcrete} in a very concise way. 
More precisely, writing $ S^{(a)} \in M_{N_a}(\FO(\G)) $ for the matrix with entries $ S^{(a)}_{ij} = S(f^{(a)}_{ij}) $ 
and $ \hat{A} $ for the $ d \times d $-matrix with coefficients $ \hat{A}^b_a = A_{ija}^{rsb} $ we obtain 
\begin{align*}
S^{(a)} (S^{(a)})^* S^{(a)} &= S^{(a)} \\
(S^{(a)})^* S^{(a)} &= \sum_b \hat{A}^b_a S^{(b)} (S^{(b)})^*
\end{align*}
for all $ 1 \leq a \leq d $. The first formula says that the elements $ S^{(a)} \in M_{N_a}(\FO(\G)) $ are partial isometries. This means in particular that 
their entries are bounded in norm by $ 1 $, which implies in turn that the universal $ C^\ast $-algebras $ \FO_A $ and $ \FO(\G) $ 
always exist. The second formula can be viewed as a matrix-valued version of the classical Cuntz-Krieger relation. 

\begin{remark}
From Proposition \ref{qckconcrete} and the above remarks it may appear at first sight that $ \FO(\G) \cong \FO_A $ does not depend on 
the $ \delta $-form $ \psi $ in $ \G = (B, \psi, A) $. However, recall from Lemma \ref{adjacencycomputation} that the choice of $ \psi $ is 
reflected in the defining relations for the coefficients $ A_{ija}^{rsb} $ of the quantum adjacency matrix. 
\end{remark}

\begin{remark} 
As will be discussed in more detail at the start of the next section, the notation $ \FO_A $ used in Proposition \ref{qckconcrete} is compatible 
with our notation for free Cuntz-Krieger algebras introduced in Definition \ref{defliberatedck}.  
\end{remark}

\section{Examples} \label{secexamples}

In this section we take a look at some examples of quantum graphs and their associated quantum Cuntz-Krieger algebras in the sense of 
Definition \ref{defqck}. 

\subsection{Classical graphs} \label{parclassical}

Assume that $ E = (E^0, E^1) $ is a finite simple directed graph with $ N $ vertices. 
The directed quantum graph $ \G $ associated with $ E $ has $ B = C(E^0) = \mathbb{C}^N $ as underlying $ C^\ast $-algebra. We work with the canonical 
basis $ e_1, \dots, e_N $ of minimal projections in $ B $ and the normalized standard trace $ \tr: B \rightarrow \mathbb{C} $. That is, $ \tr(e_i) = 1/N $ 
for all $ i $, and we have $ m(e_i \otimes e_j) = \delta_{ij} e_i $ and $ m^*(e_i) = N e_i \otimes e_i $. 
If $ B_E $ denotes the adjacency matrix of $ E $ then 
$$ 
A(e_i) = \sum_{j = 1}^N B_E(i,j) e_j 
$$
determines a quantum adjacency matrix $ A: L^2(B) \rightarrow L^2(B) $. 

\begin{prop} \label{qgraphalgebraclassical}
Let $ E $ be a finite simple directed graph and let $ \G = (B, \psi, A) $ be the quantum graph corresponding to $ E $ as above. Then the 
free Cuntz-Krieger algebra associated with the adjacency matrix $ B_E $ of $ E $ is canonically isomorphic to the quantum Cuntz-Krieger algebra $ \FO(\G) $. 
\end{prop}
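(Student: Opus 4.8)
The plan is to set up a bijection between quantum Cuntz-Krieger $\G$-families in an arbitrary $C^\ast$-algebra $D$ and free Cuntz-Krieger $B_E$-families in $D$, and then invoke the universal properties of $\FO(\G)$ and $\FO_{B_E}$. Since $B = \mathbb{C}^N$ has the minimal projections $e_1, \dots, e_N$ as a linear basis, a linear map $s: B \rightarrow D$ is the same data as a tuple $(t_1, \dots, t_N)$ of elements of $D$ via $t_i = s(e_i)$; note that $s^*(e_i) = s(e_i^*)^* = t_i^*$, since each $e_i$ is a self-adjoint idempotent.

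First I would unwind the two defining relations of a quantum Cuntz-Krieger $\G$-family using the explicit formulas $m^*(e_i) = N e_i \otimes e_i$ and $A(e_i) = \sum_j B_E(i,j) e_j$ recorded above. For relation (a), iterating the formula for $m^*$ gives $(\id \otimes m^*)m^*(e_i) = N^2 e_i \otimes e_i \otimes e_i$, so the left-hand side evaluates to $N^2 t_i t_i^* t_i$, and the relation reads $N^2 t_i t_i^* t_i = t_i$ for all $i$. For relation (b), applying the left-hand side to $e_i$ gives $N t_i^* t_i$, while the right-hand side gives $N \sum_j B_E(i,j) t_j t_j^*$; cancelling the common factor $N$ yields $t_i^* t_i = \sum_j B_E(i,j) t_j t_j^*$.

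The key observation is then the rescaling $S_i := N t_i = N s(e_i)$. Under this substitution relation (a) becomes $S_i S_i^* S_i = S_i$, that is, each $S_i$ is a partial isometry, and relation (b) becomes precisely the free Cuntz-Krieger relation $S_i^* S_i = \sum_j B_E(i,j) S_j S_j^*$ of Definition \ref{defliberatedck}. Since the assignment $s \mapsto (N s(e_i))_i$ is manifestly a bijection between linear maps $B \rightarrow D$ and tuples in $D$, it restricts to a bijection between quantum Cuntz-Krieger $\G$-families and free Cuntz-Krieger $B_E$-families in $D$. Applying this correspondence to $D = \FO(\G)$ and $D = \FO_{B_E}$ and comparing the respective universal properties produces mutually inverse $\ast$-homomorphisms, hence the desired canonical isomorphism $\FO_{B_E} \cong \FO(\G)$.

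The computation is entirely routine, and I do not expect a genuine obstacle; the only point requiring care is the bookkeeping of the normalization constant $\delta^2 = N$, which enters through $m^*$ and must be absorbed into the rescaling $S_i = N s(e_i)$ so that relation (a) yields a genuine partial isometry rather than a scalar multiple of one. I remark that the statement can alternatively be deduced from Proposition \ref{qckconcrete}: for commutative $B$ all matrix blocks are one-dimensional, the adapted matrix units satisfy $f^{(a)}_{11} = N e_a$, the coefficients of $A$ reduce to $A^{11b}_{11a} = B_E(a,b)$, and the two families of relations there collapse exactly to the free Cuntz-Krieger relations for $B_E$.
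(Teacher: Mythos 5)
Your proposal is correct and follows essentially the same route as the paper: both unwind the two quantum Cuntz-Krieger relations on the basis of minimal projections using $m^*(e_i) = N\, e_i \otimes e_i$, perform the rescaling $S_i = N\, s(e_i)$ to obtain exactly the free Cuntz-Krieger relations for $B_E$, and conclude by matching universal properties (the paper likewise notes that the statement is a special case of Proposition~\ref{qckconcrete}). The normalization bookkeeping in your computation agrees with the paper's.
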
 

\begin{proof} 
This can be viewed as a special case of Proposition \ref{qckconcrete}, but let us write down the key formulas explicitly. 
Note that $ \tr $ is a $ \delta $-form with $ \delta^2 = N $ and consider $ S_i = N S(e_i) \in \FO(\G) $.  
Then the defining relations for a free Cuntz-Krieger $ B_E $-family are obtained from 
\begin{align*}
S_i S_i^* S_i &= N^3 \mu(\id \otimes \mu)(S(e_i) \otimes S^*(e_i) \otimes S(e_i)) \\
&= N \mu(\id \otimes \mu)(S \otimes S^* \otimes S)(\id \otimes m^*)m^*(e_i) \\
&= N S(e_i) = S_i
\end{align*}
and 
\begin{align*}
S_i^* S_i &= N^2 \mu(S^* \otimes S)(e_i \otimes e_i) \\
&= N\mu(S^* \otimes S)m^*(e_i) \\
&= N \mu(S \otimes S^*)m^*(A(e_i)) \\
&= N^2 \sum_{j = 1}^N B_E(i,j) \mu(S \otimes S^*)(e_j \otimes e_j) \\
&= \sum_{j = 1}^N B_E(i,j) S_j S_j^* 
\end{align*} 
for all $ i $. This yields a $ \ast $-homomorphism $ \FO_{B_E} \rightarrow \FO(\G) $. 
Similarly, one checks that the linear map $ s: B \rightarrow \FO_{B_E} $ given by $ s(e_i) = \frac{1}{N} S_i $ is a quantum Cuntz-Krieger $ \G $-family,  
which induces a $ \ast $-homomorphism $ \FO(\G) \rightarrow \FO_{B_E} $. These maps are mutually inverse isomorphisms. 
\end{proof} 

It follows from the remarks after Definition \ref{defdirectedqgraph} that every quantum Cuntz-Krieger algebra $ \FO(\G) $ over a directed quantum 
graph $ \G = (B, \psi, A) $ with $ B $ abelian 
is a free Cuntz-Krieger algebra associated to some $ 0,1 $-matrix, and conversely, all free Cuntz-Krieger algebras arise in this way. 

Let us also point out that already quantum Cuntz-Krieger algebras associated with classical graphs as in Proposition \ref{qgraphalgebraclassical} 
may fail to be unital. This is of course in contrast to the situation for ordinary Cuntz-Krieger algebras.

\subsection{Complete quantum graphs and quantum Cuntz algebras} \label{parQKn}

Let us next consider an arbitrary finite quantum space $ (B, \psi) $ in standard form, using the same notation as after Definition \ref{deffqs}. 
Following \cite{BCEHPSWbigalois}, we can form the {\it complete quantum graph} on $ (B,\psi) $, which is the directed quantum 
graph $ K(B,\psi) = (B,\psi,A) $ with quantum adjacency matrix $ A: L^2(B) \to L^2(B) $ given by $ A(b) = \delta^2 \psi(b)1 $. 
In terms of the adapted matrix units $ f_{ij}^{(a)} \in B $ defined in Lemma \ref{mstarcomputation} we get 
$$
A(f_{ij}^{(a)}) = \delta_{ij} \delta^2 1 = \sum_{b} \sum_k \delta_{ij}\delta^2 (Q_{(a)})_{kk}f^{(b)}_{kk}. 
$$  
Therefore, relative to this basis, we have the matrix representation $ A = (A^{klb}_{ija}) $, where
\begin{align*}
A^{klb}_{ija} = \delta_{ij}\delta_{kl} \delta^2(Q_{(b)})_{kk}. 
\end{align*}
It follows from Proposition \ref{qckconcrete} and the preceding discussion that the quantum Cuntz-Krieger algebra $ \FO(K(B,\psi)) $ is the 
universal $ C^\ast $-algebra with generators $ S_{ij}^{(a)} $ for $ 1 \leq a \leq d, 1 \leq i,j \leq N_a $ and relations 
\begin{align*}
\sum_{rs} S^{(a)}_{ir} (S^{(a)}_{sr})^* S^{(a)}_{sj} &= S^{(a)}_{ij}, \\
\sum_r (S^{(a)}_{ri})^* S^{(a)}_{rj} &= \delta_{ij} \delta^2 \sum_{b} \sum_{kl} (Q_{(b)})_{kk}S^{(b)}_{kl} (S^{(b)}_{kl})^*
\end{align*}
for all $ a,i,j $.   

\begin{example} \label{completematrixqgraph}
Let us consider explicitly the special case of the complete quantum graph $ K(M_N(\mathbb{C}), \tr) $ on a full matrix algebra $ B = M_N(\mathbb{C}) $. 
The $ C^\ast $-algebra $ \FO(K(M_N(\mathbb{C}), \tr)) $ has generators $ S_{ij} $ for $ 1 \leq i,j \leq N $ satisfying the relations 
\begin{align*}
\sum_{kl} S_{ik} S^*_{lk} S_{lj} &= S_{ij} \\
\sum_r S^*_{ri} S_{rj} &= \delta_{ij} N \sum_{rs} S_{rs} S^*_{rs} 
\end{align*}
for all $ i,j $. 
\end{example} 

Note that when $ B = \mathbb{C}^d $ is abelian, Proposition \ref{qgraphalgebraclassical} implies that $ \FO(K(\mathbb{C}^d, \tr)) $ is nothing other than 
the free Cuntz-Krieger algebra associated to the complete graph $ K_d $, 
or equivalently, the free graph $ C^\ast $-algebra associated to the graph with a single vertex and $ d $ self-loops. 
Thus $ \FO(K(\mathbb{C}^d, \tr)) $ identifies with the Cuntz algebra $ \O_d $, compare the remarks after Definition \ref{defliberatedck}. 
With this in mind, we may call any quantum Cuntz-Krieger algebra of the form $ \FO(K(B,\psi)) $ a {\it quantum Cuntz algebra}.  

The algebras obtained in this way are in fact rather closely related to Cuntz algebras, as we discuss next.  

\begin{lemma} \label{QCChomo}
Let $ \FO(K(B,\psi)) $ be as above and write $ n = \dim(B) $. Then there exists a surjective $ \ast $-homomorphism $ \phi: \FO(K(B,\psi)) \rightarrow \O_n $ 
such that 
$$
\phi(S_{ij}^{(a)}) = \frac{1}{(Q_{(a)})_{ii}^{1/2} \delta} s_{ij}^{(a)}
$$
for all $ a,i,j $, where $ s_{ij}^{(a)} $ are standard generators of the Cuntz algebra $ \O_n $. 
\end{lemma}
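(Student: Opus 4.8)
The plan is to invoke the universal property of $ \FO(K(B,\psi)) $ in the concrete form of Proposition \ref{qckconcrete}. Since that description presents $ \FO(K(B,\psi)) $ as the universal $ C^\ast $-algebra generated by elements $ S^{(a)}_{ij} $ subject to the two relations recorded just before Example \ref{completematrixqgraph}, it suffices to produce elements $ T^{(a)}_{ij} \in \O_n $ satisfying those same relations; the universal property then yields a unique $ \ast $-homomorphism $ \phi $ with $ \phi(S^{(a)}_{ij}) = T^{(a)}_{ij} $. I would set $ T^{(a)}_{ij} = ((Q_{(a)})_{ii}^{1/2}\delta)^{-1} s^{(a)}_{ij} $, labelling the $ n = \dim(B) $ standard generators of $ \O_n $ by the very index set $ (a,i,j) $ that labels the adapted matrix units $ f^{(a)}_{ij} $, so that they obey the Cuntz relations $ (s^{(a)}_{ij})^* s^{(b)}_{kl} = \delta_{ab}\delta_{ik}\delta_{jl} 1 $ and $ \sum_{b,k,l} s^{(b)}_{kl}(s^{(b)}_{kl})^* = 1 $. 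The numerical input I expect to use repeatedly is the $ \delta $-form identity $ \Tr(Q_{(a)}^{-1}) = \delta^2 $, equivalently $ \sum_s (Q_{(a)})_{ss}^{-1}\delta^{-2} = 1 $.

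For the first relation, substituting the $ T^{(a)}_{ij} $ into $ \sum_{rs} T^{(a)}_{ir} (T^{(a)}_{sr})^* T^{(a)}_{sj} $ and applying the isometry relation $ (s^{(a)}_{sr})^* s^{(a)}_{sj} = \delta_{rj} 1 $ collapses the sum over $ r $ and leaves $ \big(\sum_s (Q_{(a)})_{ss}^{-1}\delta^{-2}\big)\, ((Q_{(a)})_{ii}^{1/2}\delta)^{-1} s^{(a)}_{ij} $; the bracketed factor is $ 1 $ by the $ \delta $-form identity, so this equals exactly $ T^{(a)}_{ij} $. For the second relation, the left hand side $ \sum_r (T^{(a)}_{ri})^* T^{(a)}_{rj} $ reduces by the same isometry relation to $ \delta_{ij}\big(\sum_r (Q_{(a)})_{rr}^{-1}\delta^{-2}\big) 1 = \delta_{ij} 1 $. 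On the right hand side, the scalar attached to each $ s^{(b)}_{kl}(s^{(b)}_{kl})^* $ is $ \delta^2 (Q_{(b)})_{kk}\cdot((Q_{(b)})_{kk}^{1/2}\delta)^{-2} = 1 $, so it becomes $ \delta_{ij}\sum_{b,k,l} s^{(b)}_{kl}(s^{(b)}_{kl})^* $, which is $ \delta_{ij} 1 $ by Cuntz completeness. Both sides thus agree.

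Having verified the relations, surjectivity is immediate: each scalar $ ((Q_{(a)})_{ii}^{1/2}\delta)^{-1} $ is nonzero, so the image of $ \phi $ contains every generator $ s^{(a)}_{ij} $ of $ \O_n $. I do not anticipate a genuine obstacle here, since the content is essentially bookkeeping, but the one point requiring care is matching the normalizations so that the two distinct roles of the hypotheses both produce exactly the constant $ 1 $: the $ \delta $-form identity $ \Tr(Q_{(a)}^{-1}) = \delta^2 $ collapses the partial-isometry sum in the first relation, while the same identity together with the \emph{full} Cuntz completeness relation (summed over all of $ b,k,l $) cancels the weights $ (Q_{(b)})_{kk} $ against the chosen constants in the second.
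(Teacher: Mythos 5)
Your proposal is correct and follows essentially the same route as the paper: both verify the two defining relations from Proposition \ref{qckconcrete} for the rescaled Cuntz generators, using the identity $\Tr(Q_{(a)}^{-1}) = \delta^2$ and the Cuntz relations exactly as you do. Your explicit appeal to the universal property and your remark on surjectivity only make explicit what the paper leaves implicit.
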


\begin{proof} 
We just have to check that the elements $ \phi(S^{(a)}_{ij}) $ satisfies the defining relations of $ \FO(K(B,\psi)) $ from above. Indeed, we obtain
\begin{align*}
\sum_{rs} \phi(S^{(a)}_{ir}) \phi(S^{(a)}_{sr})^* \phi(S^{(a)}_{sj}) 
&= \sum_{rs} \frac{1}{(Q_{(a)})_{ii}^{1/2}(Q_{(a)})_{ss} \delta^3}s^{(a)}_{ir} (s^{(a)}_{sr})^* s^{(a)}_{sj} \\
&=\sum_{s} \frac{1}{(Q_{(a)})_{ii}^{1/2}(Q_{(a)})_{ss} \delta^3}s^{(a)}_{ij} \\
& = \frac{1}{(Q_{(a)})_{ii}^{1/2} \delta} s^{(a)}_{ij} \\
&= \phi(S_{ij}^{(a)}),
\end{align*}
and similarly
\begin{align*}
\sum_r \phi(S^{(a)}_{ri})^* \phi(S^{(a)}_{rj}) &= \sum_r \frac{1}{(Q_{(a)})_{rr} \delta^2}(s^{(a)}_{ri})^* s^{(a)}_{rj} \\
&= \delta_{ij}\\
&= \delta_{ij} \sum_{bkl} s^{(b)}_{kl} (s^{(b)}_{kl})^* \\
&= \delta_{ij} \delta^2 \sum_{bkl}  (Q_{(b)})_{kk} \phi(S^{(b)}_{kl}) \phi(S^{(b)}_{kl})^*
\end{align*}
as required. 
\end{proof}

\begin{remark}
Lemma \ref{QCChomo} implies in particular that the canonical linear map $ S: B \rightarrow \FO(K(B,\psi)) $ is injective. 
This is not always the case for general quantum Cuntz-Krieger algebras. An explicit example will be given in Example \ref{noninj} further below. 
\end{remark}

Our main structure result regarding the quantum Cuntz algebras $ \FO(K(B,\psi)) $ can be stated as follows. 

\begin{theorem} \label{quantumcompletemain} 
Let $ B $ be an $ n $-dimensional $ C^\ast $-algebra and let $ \psi: B \rightarrow \mathbb{C} $ be a $ \delta $-form satisfying $ \delta^2 \in \mathbb{N} $. 
Then $ \FO(K(B,\psi)) \cong \O_n $. 
\end{theorem}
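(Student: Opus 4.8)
The starting point is the surjective $\ast$-homomorphism $\phi\colon \FO(K(B,\psi)) \to \O_n$ produced in Lemma \ref{QCChomo}. The plan is to show that this map is automatically injective once one knows that $\FO(K(B,\psi))$ is simple: the kernel of $\phi$ is a closed two-sided ideal which is proper (since $\O_n \ne 0$), so simplicity forces $\ker\phi = 0$, whence $\phi$ is an isomorphism. The entire theorem thus reduces to the single assertion that $\FO(K(B,\psi))$ is simple, and it is at this point that the hypothesis $\delta^2 \in \mathbb{N}$ and the quantum symmetry machinery enter.

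To prove simplicity I would exploit the action of the quantum automorphism group $\Aut^+(B,\psi)$ on $\FO(K(B,\psi))$ furnished by the results of Section \ref{secquantumsymmetry}. Because $\delta^2 = m$ is a positive integer, $\Aut^+(B,\psi)$ is monoidally equivalent to a quantum automorphism group possessing a genuine classical point, namely the quantum symmetric group $\Aut^+(\mathbb{C}^{m})$ acting on the abelian model $K(\mathbb{C}^{m},\tr)$. By Proposition \ref{qgraphalgebraclassical} together with the remarks after Definition \ref{defliberatedck}, the quantum Cuntz-Krieger algebra of this classical complete graph is the Cuntz algebra $\O_{m}$, which is simple. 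I would then use the linking algebra attached to this monoidal equivalence to relate $\FO(K(B,\psi))$ to the simple classical model in a way that transfers simplicity; the unitary error basis constructions of Section \ref{secunitaryerror} make the relevant finite-dimensional quantum isomorphisms explicit in the matrix case $K(M_N(\mathbb{C}),\tr)$, which serves as the concrete base case.

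Combining the transfer of simplicity with the base cases yields simplicity of $\FO(K(B,\psi))$ for an arbitrary $\delta$-form with $\delta^2 \in \mathbb{N}$, and the reduction in the first paragraph then upgrades the surjection $\phi$ of Lemma \ref{QCChomo} to the desired isomorphism $\FO(K(B,\psi)) \cong \O_n$.

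The delicate point is precisely this transfer of simplicity. It cannot proceed through a naive Morita equivalence with the classical model $\O_{m}$: the theorem asserts $\FO(K(B,\psi)) \cong \O_n$ with $n = \dim(B)$, and one may have $\delta^2 = m > n$ (for example for a non-tracial $\delta$-form on a single matrix block, where the harmonic-mean estimate gives $\delta^2 \geq N^2 = n$ with equality only for the trace). A Morita equivalence with $\O_{m}$ would equate the $K$-theories $\mathbb{Z}/(n-1)$ and $\mathbb{Z}/(m-1)$, which is false. Hence the linking-algebra argument must be arranged so that it transports the lattice of ideals, and thereby simplicity, while remaining consistent with the dimension mismatch $m \neq n$ and with the rescaling of generators in Lemma \ref{QCChomo}; verifying this compatibility is the main obstacle, whereas the remaining steps amount to bookkeeping with the relations of Proposition \ref{qckconcrete}.
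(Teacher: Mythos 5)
Your reduction is sound as far as it goes: since Lemma \ref{QCChomo} gives a surjection $\phi\colon \FO(K(B,\psi))\to\O_n$ onto a nonzero algebra, simplicity of $\FO(K(B,\psi))$ would indeed force $\phi$ to be an isomorphism. The genuine gap is that you never actually prove simplicity; you correctly identify the ``transfer of simplicity'' through the linking algebra as the main obstacle, observe that a naive Morita-type argument is ruled out by the $K$-theory mismatch between $\O_n$ and $\O_{\delta^2}$, and then leave the resolution open. No mechanism in the paper (or in your sketch) shows that quantum isomorphic quantum graphs have quantum Cuntz--Krieger algebras with isomorphic ideal lattices; even the injectivity of the lifted map $\hat{\beta}\colon \FO(K(B,\psi))\to\O_{\delta^2}\otimes C(G^+(K_{\delta^2},K(B,\psi)))$ on the whole $C^\ast$-algebra is left open in Remark \ref{remhatinjective} (only injectivity on the dense $\ast$-subalgebra generated by $S(B)$ is established there). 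So the step you flag as ``the main obstacle'' is in fact the entire content of the theorem, and your proposal does not close it.

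The paper's proof avoids simplicity of $\FO(K(B,\psi))$ as an intermediate step altogether. Instead it shows directly that the rescaled generators $(Q_{(a)})_{ii}^{1/2}\delta\, S^{(a)}_{ij}$ satisfy the Cuntz relations of $\O_n$: the generators $v^x_{ija}$ of the linking algebra $A=C(G^+(K_{\delta^2},K(B,\psi)))$, suitably normalized, assemble into a \emph{rectangular} $\delta^2\times n$ unitary $u$ over $A$ (verified from the relations of Proposition \ref{qisorelations}); a short lemma shows that twisting the $\delta^2$ standard Cuntz isometries of $\O_{\delta^2}$ by the columns of such a rectangular unitary produces an $n$-tuple of Cuntz isometries in $\O_{\delta^2}\otimes A$; and these are exactly the images under $\hat{\beta}$ of the rescaled generators. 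Pulling the relations back through the injectivity of $\hat{\beta}$ on the dense $\ast$-subalgebra (Remark \ref{remhatinjective}) shows that $\FO(K(B,\psi))$ is generated by $n$ Cuntz isometries, hence is a quotient of $\O_n$, and simplicity of $\O_n$ (not of $\FO(K(B,\psi))$) finishes the argument. Note how the rectangular shape of $u$ is precisely what reconciles the dimension mismatch $\delta^2\neq n$ that worried you: the passage from $\delta^2$ isometries to $n$ isometries happens inside $\O_{\delta^2}\otimes A$ by an explicit computation, with no appeal to Morita equivalence or to a transport of ideal lattices. To repair your proposal you would need to replace the unproved transfer-of-simplicity step with an argument of this kind.
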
 

We will prove Theorem \ref{quantumcompletemain} using methods from the theory of quantum groups in section \ref{secquantumcomplete}. 
Under the hypothesis $ \delta^2 \in \mathbb{N} $, Theorem \ref{quantumcompletemain} implies that 
the $ \ast $-homomorphism $ \phi: \FO(K(B,\psi)) \rightarrow \O_n $ constructed in Lemma \ref{QCChomo} is an isomorphism. 

It seems remarkable that the relations defining $ \FO(K(B,\psi)) $ do indeed characterize the Cuntz algebra $ \O_n $, at least when we restrict 
to $ \delta $-forms satisfying the above integrality condition. Already in the special case $ (B, \psi) = (M_N(\mathbb{C}), \tr) $ 
from Example \ref{completematrixqgraph} it seems not even obvious that $ \FO(K(B, \psi)) $ is {\it unital}. In fact, an easy argument shows that the 
element $ e = N^2 \sum_{kl} S_{kl} (S_{kl})^* \in \FO(K(M_N(\mathbb{C}),\tr)) $ satisfies $ S_{ij} e = S_{ij} $ for all $ 1 \leq i,j \leq N $. 
In section \ref{secquantumcomplete} we will verify in particular the less evident relation $ e S_{ij} = S_{ij} $ for all $ i,j $. 

We note at the same time that $ \FO(K(M_N(\mathbb{C}),\tr)) $ is very different from the universal $ C^\ast $-algebra generated 
by the coefficients of a $ N \times N $-matrix $ S = (S_{ij}) $ satisfying $ S^* S = \id $, as the latter algebra admits many characters.

\subsection{Trivial quantum graphs} \label{parQTn}

If $ (B,\psi) $ is a finite quantum space as above, then the {\it trivial quantum graph} $ T(B,\psi) $ on $ (B,\psi) $ is given by the quantum 
adjacency matrix $ A = \id $, so that we have the matrix representation $ A^{klb}_{ija} = \delta_{ab} \delta_{ik} \delta_{jl} $. 
Using Proposition \ref{qckconcrete} we see that the quantum Cuntz-Krieger algebra $ \FO(T(B, \psi)) $ is the universal $ C^\ast $-algebra with 
generators $ S_{ij}^{(a)} $ for $ 1 \leq a \leq d, 1 \leq i,j \leq N_a $, and relations 
\begin{align*}
\sum_{kl} S_{ik}^{(a)}(S_{lk}^{(a)})^* S_{lj}^{(a)} &= S_{ij}^{(a)} \\
\sum_k (S_{ki}^{(a)})^* S_{kj}^{(a)} &= \sum_k S_{ik}^{(a)} (S_{jk}^{(a)})^* 
\end{align*}
for all $ a,i,j $. We note that $ \FO(T(B, \psi)) $ is independent of the $ \delta $-form $ \psi $ on $ B $, and we will therefore also 
write $ \FO(TB) $ instead of $ \FO(T(B,\psi)) $ in the sequel. 

\begin{example} 
Let us consider explicitly the special case of the trivial quantum graph $ TM_N = TM_N(\mathbb{C}) $ on a full matrix algebra $ B = M_N(\mathbb{C}) $. 
The $ C^\ast $-algebra $ \FO(TM_N) $ has generators $ S_{ij} $ for $ 1 \leq i,j \leq N $ satisfying the relations 
\begin{align*}
\sum_{kl} S_{ik} S^*_{lk} S_{lj} &= S_{ij} \\
\sum_k S^*_{ki} S_{kj} &= \sum_k S_{ik} S^*_{jk} 
\end{align*}
for all $ i,j $. 

It is easy to check that $ \FO(TM_N) $ maps onto Brown's algebra \cite{Brownext}, that is, the universal $ C^\ast $-algebra $ U_N^{nc} $ 
generated by the entries of a unitary $ N \times N $-matrix $ u = (u_{ij}) $, by sending $ S_{ij} $ to $ u_{ij} $. 
This shows in particular that $ \FO(TM_N) $ for $ N > 1 $ is not nuclear. 
We may also map $ \FO(TM_N) $ onto the non-unital free product $ \mathbb{C} * \cdots * \mathbb{C} $ of $ N $ copies of $ \mathbb{C} $, 
by sending $ S_{ij} $ to $ \delta_{ij} 1_i $, where $ 1_i $ denotes the unit element in the $ i $-th copy of $ \mathbb{C} $. 
It follows that $ \FO(TM_N) $ is not unital for $ N > 1 $. 

In our study of amplifications in section \ref{secamplification} we will obtain the following result on the structure of $ \FO(TM_N) $ as a special case 
of Theorem \ref{amplificationmain}. 

\begin{theorem} \label{Ktheoryquantumtrivial} 
Let $ TM_N $ be the trivial quantum graph as above. Then there exists a $ \ast $-isomorphism 
$$ 
M_N(\FO(TM_N)^+) \cong M_N(\mathbb{C}) *_1 (C(S^1) \oplus \mathbb{C}),  
$$
and the quantum Cuntz-Krieger algebra $ \FO(TM_N) $ is $ KK $-equivalent to $ C(S^1) $ for all $ N \in \mathbb{N} $. In particular 
\begin{align*}
K_0(\FO(TM_N)) &= \mathbb{Z}, \\
K_1(\FO(TM_N)) &= \mathbb{Z}. 
\end{align*}
\end{theorem}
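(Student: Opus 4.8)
The plan is to reformulate the defining relations in matrix form, identify $\FO(TM_N)$ via a universal property, and only at the very end feed the result into homotopy-invariant, matrix-stable bookkeeping. \textbf{Step 1 (matrix reformulation).} Collecting the generators into the matrix $S = (S_{ij}) \in M_N(\FO(TM_N))$, the two displayed relations for $\FO(TM_N)$ say precisely that $SS^*S = S$ and $S^*S = SS^*$; that is, $S$ is a \emph{normal partial isometry}, and (as already noted in the excerpt for general $\FO(\G)$) a $\ast$-homomorphism $\FO(TM_N) \to D$ is the same datum as a normal partial isometry in $M_N(D)$. Passing to the unitalization, I would set $P = S^*S = SS^*$ and $U = S + (1-P)$ in $M_N(\FO(TM_N)^+)$. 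Using $SP = S = PS$ one checks directly that $U$ is a unitary commuting with $P$, that $U(1-P) = 1-P$, and that $S = UP$, while conversely $S$ is recovered from the commuting pair $(U,P)$. Thus the structure carried by $\FO(TM_N)$ beyond the scalar matrices $M_N(\mathbb{C})$ is exactly a unital representation of $C(S^1) \oplus \mathbb{C}$, the factor $C(S^1)$ sitting under $P$ (where $U$ is a free unitary) and the factor $\mathbb{C}$ under $1-P$ (where $U$ collapses to the identity).

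\textbf{Step 2 (the free product isomorphism).} I would prove $M_N(\FO(TM_N)^+) \cong M_N(\mathbb{C}) *_1 (C(S^1)\oplus\mathbb{C})$ by comparing, for an arbitrary unital $C^\ast$-algebra $D$, the two functors of unital $\ast$-homomorphisms into $D$. A unital $\ast$-homomorphism out of the free product is exactly a system of matrix units $(E_{ij})$ in $D$ together with an unrelated unital representation of $C(S^1)\oplus\mathbb{C}$, i.e.\ a normal partial isometry in $D$. On the other side, a unital $\ast$-homomorphism $M_N(\FO(TM_N)^+) \to D$ restricts to matrix units $(E_{ij})$, under which $D \cong M_N(E_{11}DE_{11})$, together with a $\ast$-homomorphism $\FO(TM_N) \to E_{11}DE_{11}$, that is, by Step 1, a normal partial isometry in $M_N(E_{11}DE_{11}) \cong D$. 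These descriptions agree naturally in $D$, so the universal properties produce mutually inverse $\ast$-homomorphisms. (For $N=1$ this specializes to $C(S^1)^+ \cong \mathbb{C} *_1 (C(S^1)\oplus\mathbb{C}) = C(S^1)\oplus\mathbb{C}$, a reassuring check.)

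\textbf{Step 3 ($K$-theory).} Since $K$-theory is matrix-stable, $K_*(\FO(TM_N)^+)$ equals the $K$-theory of $F = M_N(\mathbb{C}) *_1 (C(S^1)\oplus\mathbb{C})$. Feeding the unital inclusions $\mathbb{C} \hookrightarrow M_N(\mathbb{C})$ and $\mathbb{C} \hookrightarrow C(S^1)\oplus\mathbb{C}$ into the six-term exact sequence for the $K$-theory of a full free product, and using $K_0(M_N(\mathbb{C})) = \mathbb{Z}$ with $[1] = N$, together with $K_*(C(S^1)\oplus\mathbb{C}) = (\mathbb{Z}^2, \mathbb{Z})$, the comparison map $\mathbb{Z} \to \mathbb{Z}^3$ sending $1 \mapsto (N,1,1)$ is injective, so the connecting map vanishes and one reads off $K_0(F) = \mathbb{Z}^3/\mathbb{Z}(N,1,1) = \mathbb{Z}^2$ and $K_1(F) = \mathbb{Z}$. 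Stripping the unitalization via the split exact sequence $0 \to \FO(TM_N) \to \FO(TM_N)^+ \to \mathbb{C} \to 0$ removes one copy of $\mathbb{Z}$ from $K_0$, yielding $K_0(\FO(TM_N)) = \mathbb{Z}$ and $K_1(\FO(TM_N)) = \mathbb{Z}$.

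\textbf{Step 4 ($KK$-equivalence) and the main obstacle.} The $K$-groups now match those of $C(S^1)$, but promoting this to a genuine $KK$-equivalence is where the real difficulty lies: $F$, hence $\FO(TM_N)$, is non-nuclear for $N>1$ (it surjects onto Brown's algebra), so the UCT is unavailable and one cannot simply deduce $KK$-equivalence from the $K$-theory isomorphism. I would instead work at the level of the $KK$-category, invoking the $KK$-theoretic treatment of full free products, which supplies the split exact triangle $\mathbb{C} \to M_N(\mathbb{C})\oplus(C(S^1)\oplus\mathbb{C}) \to F$ (the inclusion being split by the retraction onto the $\mathbb{C}$-summand of $C(S^1)\oplus\mathbb{C}$) as an actual $KK$-statement rather than a mere six-term sequence. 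Combined with matrix-stability, giving $\FO(TM_N)^+ \sim_{KK} F$, and with the split unitalization extension of Step 3, this produces $\FO(TM_N) \sim_{KK} C(S^1)$. This is precisely the homotopy-invariant, matrix-stable argument underlying Theorem \ref{graphversusfreegraph}, now applied to the amplification of the single self-loop graph, and it is the step demanding the most care.
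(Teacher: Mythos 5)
Your proposal is correct in substance and reaches both halves of the theorem, but it takes a genuinely different route at each key step, so let me compare. For the isomorphism $M_N(\FO(TM_N)^+) \cong M_N(\mathbb{C}) *_1 (C(S^1)\oplus\mathbb{C})$, the paper specializes Theorem \ref{amplificationmain}(a), which follows McClanahan: it writes down explicit mutually inverse $\ast$-homomorphisms $F$ and $G$ on generators ($G(S_{eij}) = \sum_k e_{ki}S_e e_{jk}$, $F(S_e) = \sum_{ij}S_{eij}\otimes e_{ij}$) and verifies the relations by hand. Your Steps 1--2 obtain the same isomorphism by representability: both algebras corepresent the functor ``a system of matrix units in $D$ together with a normal partial isometry in $D$'', using the clean observation that $S = UP$ for a commuting pair $(U,P)$, i.e.\ a unital representation of $C(S^1)\oplus\mathbb{C}$. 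This is a more conceptual packaging of the same isomorphism and is perfectly valid (your maps are exactly the paper's $F$ and $G$ in disguise). For the $KK$-equivalence, the paper proves Proposition \ref{unitalfreeproductkk} ($M_N(\mathbb{C}) *_1 A^+ \sim_{KK} A^+$) by constructing explicit $KK$-classes $[\phi]$, $[\psi]$ and rotation homotopies, invoking Germain/Fima--Germain only to reduce the check of $[\psi]\circ[\phi] = \id$ to the two canonical legs $j_{A^+}$ and $j_{M_N(\mathbb{C})}$. You instead use the Germain triangle directly, together with the fact that the unit map $\mathbb{C}\to M_N(\mathbb{C})\oplus(C(S^1)\oplus\mathbb{C})$ is split by a genuine $\ast$-homomorphism retraction, to identify the cofiber with $M_N(\mathbb{C})\oplus C(S^1)$ and hence $F \sim_{KK} \mathbb{C}\oplus C(S^1)$. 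That computation is sound, and your $K$-theory bookkeeping in Step 3 is correct.

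The one place you need to firm things up is the very last reduction in Step 4. After matrix stability and the split unitalization you arrive at $\FO(TM_N)\oplus\mathbb{C} \sim_{KK} C(S^1)\oplus\mathbb{C}$, and you then ``strip off'' the $\mathbb{C}$. Outside the bootstrap class (and $\FO(TM_N)$ is non-nuclear and not known to satisfy the UCT, as you yourself note) one cannot cancel a $KK$-direct summand blindly: $A\oplus\mathbb{C}\sim_{KK}B\oplus\mathbb{C}$ does not by itself give $A\sim_{KK}B$. What rescues the argument --- and what the paper explicitly records in the proof of Theorem \ref{amplificationmain}(b) --- is that all the equivalences in the chain are compatible with the canonical augmentation morphisms to $\mathbb{C}$, so they restrict to equivalences between the kernels of the augmentations; equivalently, the cofibers of the split inclusions $\mathbb{C}\to\FO(TM_N)^+$ and $\mathbb{C}\to C(S^1)^+$ are matched by the equivalence, and cofibers of isomorphic split monomorphisms agree. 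You should make this naturality check explicit (it is routine for your triangle-based argument, since the retraction you use is the augmentation itself), but as written it is the only genuine gap.
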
 

Here $ *_1 $ denotes the unital free product and $ \FO(TM_N)^+ $ is the minimal unitarization of $ \FO(TM_N) $. 

With little extra effort one can also determine generators for the $ K $-groups in Theorem \ref{Ktheoryquantumtrivial}. More precisely, if we 
write $ S = (S_{ij}) $ for the matrix of generators of $ \FO(TM_N) $, then these are represented by the projection $ S^* S \in M_N(\FO(TM_N)) $ 
and the unitary $ S - (1 - S^* S) \in M_N(\FO(TM_N)^+) $, respectively. 
\end{example} 

\begin{remark} 
Combining Theorem \ref{Ktheoryquantumtrivial} and Proposition \ref{sumfreeproduct} below one can determine 
the $ K $-theory of $ \FO(TB) $ for general $ B $. More precisely, if $ B \cong \bigoplus_{a = 1}^d M_{N_a}(\mathbb{C}) $ then we obtain 
\begin{align*}
K_0(\FO(TB)) &= \mathbb{Z}^d, \\
K_1(\FO(TB)) &= \mathbb{Z}^d,
\end{align*}
taking into account \cite{Cuntzfreeproduct}. 
\end{remark}

\subsection{Diagonal quantum graphs} \label{pardiagonal}

A natural generalization of the trivial quantum graphs described in the previous paragraph are the {\it diagonal quantum graphs}. Here, we take $ (B,\psi) $ 
again to be an arbitrary finite quantum space in standard form, but replace the trivial quantum adjacency matrix $ A = \id $ with a map of the form 
$$
A(f_{ij}^{(a)}) = x_{ij}^{(a)} f_{ij}^{(a)} 
$$ 
for some suitable complex numbers $ x^{(a)}_{ij} \in \mathbb{C} $ for $ 1 \leq a \leq d, 1 \leq i,j \leq N_a $. Note that if $ B $ is abelian then the 
associated adjacency matrix is a diagonal matrix with entries in $ \{0,1 \} $. That is, the only edges possible are self-loops, and we recover precisely 
the classical notion of a diagonal graph.  

In the non-commutative setting the notion of a diagonal graph is somewhat richer. Namely, Lemma \ref{adjacencycomputation} shows that the only requirements 
on the coefficients $ x_{ij}^{(a)} $ are 
$$
\sum_s (Q_{(b)}^{-1})_{ss} x_{ks}^{(b)} x_{sl}^{(b)} = \delta^2 x_{kl}^{(b)} 
$$
for all $ 1 \leq b \leq d, 1 \leq k,l \leq N_b $.

\begin{example} \label{noninj}
Let $ B = M_N(\mathbb{C}) $ be equipped with the $ \delta $-form $ \psi $ corresponding to the 
diagonal matrix $ Q $ with entries $ q_1, \dots, q_N $ satisfying $ q_1 + \cdots + q_N = 1 $. Moreover let $ A $ be the diagonal quantum adjacency matrix with 
coefficients $ A^{ij}_{kl} = x_{ij} \delta_{ik} \delta_{jl} $ for some scalars $ x_{ij} $ satisfying $ \sum_s q_s^{-1} x_{ks} x_{sl} = \delta^2 x_{kl} $. 
The quantum Cuntz-Krieger algebra $ \FO(\G) $ associated with the diagonal quantum graph $ \G = (B, \psi, A) $ has generators $ S_{ij} $ for $ 1 \leq i,j \leq N $
satisfying the relations 
\begin{align*}
\sum_{kl} S_{ik} S_{lk}^* S_{lj} &= S_{ij} \\
\sum_k S^*_{ki} S_{kj} &= \sum_k x_{ij} S_{ik} S^*_{jk} 
\end{align*}
for all $ i,j $. 

Consider the special case $ x_{11} = q_1 \delta^2 $ and $ x_{ij} = 0 $ else. From the second relation above we 
get $ \sum_i S^*_{ij} S_{ij} = 0 $ for $ j > 1 $, and hence $ S_{ij} = 0 $ for all $ 1 \leq i \leq N $ and $ j > 1 $. 
This shows that the canonical linear map $ S: B \rightarrow \FO(\G) $ in the definition of a quantum Cuntz-Krieger algebra need not be injective. 

One may interpret this as a reflection of the fact that we work with rather general quantum adjacency matrices. It would be 
interesting to identify a suitable condition on directed quantum graphs $ \G $ which ensures that the map $ S: B \rightarrow \FO(\G) $ is injective. 

Note also that we have $ \sum_l S_{i1} S_{l1}^* S_{l1} = S_{i1} $ and $ \sum_k S^*_{k1} S_{k1} = x_{11} S_{11} S^*_{11} $ in the above special case. 
Hence for all complex numbers $ \epsilon_1, \dots, \epsilon_N $ satisfying $ |\epsilon_1|^2 + \cdots + |\epsilon_N|^2 = 1 $ 
and $ x_{11} |\epsilon_1|^2 = 1 $ there exists a $ \ast $-homomorphism $ \epsilon: \FO(\G) \rightarrow \mathbb{C} $ satisfying
$$ 
\epsilon(S_{ij}) = 
\begin{cases} 
\epsilon_i & j = 1 \\
0 & j > 1. 
\end{cases} 
$$
It follows in particular that the $ C^\ast $-algebra $ \FO(\G) $ admits a trace. 
\end{example}

\subsection{Direct sums and tensor products of quantum graphs} \label{parsumtensor}

Assume that $ \G_1 = (B_1, \psi_1, A_1) $ and $ \G_2 = (B_2, \psi_2, A_2) $ are directed quantum graphs. We obtain a finite quantum space structure  
on the direct sum $ B_1 \oplus B_2 $ by considering the state 
$$ 
\psi = \frac{\delta_1^2}{\delta^2} \psi_1 \oplus \frac{\delta_2^2}{\delta^2} \psi_2, 
$$
with $ \delta^2 = \delta_1^2 + \delta_2^2 $. 
It is easy to check that $ A = A_1 \oplus A_2 $ defines a quantum adjacency matrix on $ (B_1 \oplus B_2, \psi) $, 
so that $ \G_1 \oplus \G_2 = (B_1 \oplus B_2, \psi, A) $ is a directed quantum graph. Classically, this construction corresponds to taking the 
disjoint union of graphs. 

\begin{prop} \label{sumfreeproduct}
Let $ \G_1 = (B_1, \psi_1, A_1) $ and $ \G_2 = (B_2, \psi_2, A_2) $ be directed quantum graphs. Then 
$$ 
\FO(\G_1 \oplus \G_2) \cong \FO(\G_1) * \FO(\G_2)
$$
is the non-unital free product of $ \FO(\G_1) $ and $ \FO(\G_2) $. 
\end{prop}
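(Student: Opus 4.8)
The plan is to reduce to the explicit presentation of Proposition~\ref{qckconcrete} and then recognise the resulting relations as those of a free product. First I would put both $\G_1$ and $\G_2$ into standard form, so that $\G_1 \oplus \G_2$ is in standard form as well, and index the simple summands of $B = B_1 \oplus B_2$ so that $1 \leq a \leq d_1$ labels the blocks of $B_1$ and $d_1 < a \leq d$ those of $B_2$. The state defining $\G_1 \oplus \G_2$ has $Q$-matrices that are the scalar multiples $(\delta_1^2/\delta^2)$ resp.\ $(\delta_2^2/\delta^2)$ of the $Q$-matrices of $\psi_1$ resp.\ $\psi_2$; consequently the adapted matrix units of $\G_1 \oplus \G_2$ are uniform rescalings of those of the two factors, namely $f^{(a)}_{ij} = (\delta^2/\delta_1^2) g^{(a)}_{ij}$ on the first block and $f^{(a)}_{ij} = (\delta^2/\delta_2^2) g^{(a)}_{ij}$ on the second, where $g^{(a)}_{ij}$ are the adapted matrix units of $\G_1$ resp.\ $\G_2$. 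Since rescaling an entire basis by a single scalar leaves the matrix coefficients of an operator unchanged, and since $A = A_1 \oplus A_2$ is block diagonal, the coefficients $A^{rsb}_{ija}$ occurring in Proposition~\ref{qckconcrete} equal those of $A_1$ when $a,b \leq d_1$, those of $A_2$ when $a,b > d_1$, and vanish whenever $a$ and $b$ lie in different blocks.

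It follows that the relations defining $\FO(\G_1 \oplus \G_2)$ decouple completely: the generators $S^{(a)}_{ij}$ with $a \leq d_1$ satisfy exactly the defining relations of $\FO(\G_1)$, those with $a > d_1$ satisfy exactly the defining relations of $\FO(\G_2)$, and --- because the mixed coefficients vanish and $m^*$ only pairs indices lying in a single block --- no relation simultaneously involves generators from both families. I would turn this into an isomorphism using universal properties on both sides. Restricting the canonical family $S$ to each summand and rescaling, one checks that $b_1 \mapsto (\delta^2/\delta_1^2)\, S(\iota_1(b_1))$ and $b_2 \mapsto (\delta^2/\delta_2^2)\, S(\iota_2(b_2))$ are quantum Cuntz-Krieger families for $\G_1$ and $\G_2$, hence yield $\ast$-homomorphisms $\FO(\G_i) \to \FO(\G_1 \oplus \G_2)$; by the universal property of the non-unital free product these assemble into $\Phi \colon \FO(\G_1) * \FO(\G_2) \to \FO(\G_1 \oplus \G_2)$. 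Conversely, the linear map $s \colon B_1 \oplus B_2 \to \FO(\G_1) * \FO(\G_2)$ given on the two summands by $(\delta_1^2/\delta^2)\, j_1 S^{\G_1}$ and $(\delta_2^2/\delta^2)\, j_2 S^{\G_2}$, with $j_i$ the canonical inclusions, is a quantum Cuntz-Krieger $\G_1 \oplus \G_2$-family and produces the inverse $\Psi$; comparing $\Phi$ and $\Psi$ on generators shows they are mutually inverse.

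The technical care needed is twofold. First, one must track the scalars $\delta^2/\delta_i^2$ through relation~a): replacing a family $s$ by $\lambda s$ scales its left-hand side by $|\lambda|^2 \lambda$ while the right-hand side scales by $\lambda$, and one uses $m^*|_{B_i} = (\delta^2/\delta_i^2) m_i^*$ to see that precisely the chosen scalar repairs relation~a); relation~b) is quadratic in $s$ and is therefore automatically scale-balanced. Second, and conceptually the main point, when verifying the relations for the combined family $s$ inside the free product one must confirm that no cross terms between $\FO(\G_1)$ and $\FO(\G_2)$ ever occur: this is guaranteed exactly because $m^*$ and $A = A_1 \oplus A_2$ are block diagonal, so every product appearing in relations~a) and~b) stays within a single copy. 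This is the step where the absence of any relation linking the two subalgebras --- the defining feature distinguishing the free product from, say, the tensor product --- is genuinely used, and it is the part I would write out in full detail.
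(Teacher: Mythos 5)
Your argument is correct and follows essentially the same route as the paper, which simply invokes the universal properties of both sides together with the observation that $ m^* $ and $ A_1 \oplus A_2 $ do not mix the two summands; you have just written out in full the scalar bookkeeping (the factors $ \delta^2/\delta_i^2 $ relating $ m^*|_{B_i} $ to $ m_i^* $ and the adapted matrix units) that the paper leaves implicit. The rescalings you identify are the right ones, and relation b) being quadratic is indeed why only relation a) constrains the scalar.
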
 

\begin{proof} 
This follows directly from the universal properties of the algebras involved, noting that the quantum adjacency matrix $ A_1 \oplus A_2 $ 
does not mix generators from $ B_1 $ and $ B_2 $. 
\end{proof}

We can also form tensor products in a natural way. If $ \G_1 = (B_1, \psi_1, A_1) $ and $ \G_2 = (B_2, \psi_2, A_2) $ are directed quantum 
graphs then $ \psi = \psi_1 \otimes \psi_2 $ is a $ \delta $-form on the tensor product $ B_1 \otimes B_2 $ with $ \delta = \delta_1 \delta_2 $. 
Moreover $ A = A_1 \otimes A_2 $ defines a quantum adjacency matrix on $ (B_1 \otimes B_2, \psi) $. 
We let $ \G_1 \otimes \G_2 $ be the corresponding directed quantum graph. 

Compared to the case of direct sums, it seems less obvious how to describe the structure of $ \FO(\G_1 \otimes \G_2) $ in terms of $ \FO(\G_1) $ 
and $ \FO(\G_2) $ in general. We shall discuss a special case in the next section.

\section{Amplification} \label{secamplification}

In this section we study examples of quantum Cuntz-Krieger algebras obtained from classical graphs by replacing 
the vertices with matrix blocks. This \emph{amplification} procedure is a special case of the 
tensor product construction for quantum graphs described in paragraph \ref{parsumtensor}. 

Given a directed quantum graph $ \G = (B, \psi, A) $ and $ N \in \mathbb{N} $ we define the amplification $ M_N(\G) $ of $ \G $ to be the tensor 
product $ M_N(\G) = \G \otimes TM_N $, where $ TM_N $ is the trivial quantum graph on $ M_N(\mathbb{C}) $ 
as defined in paragraph \ref{parQTn}. Explicitly, $ M_N(\G) $ is the directed quantum graph with underlying $ C^\ast $-algebra $ B \otimes M_N(\mathbb{C}) $, 
state $ \phi = \psi \otimes \tr $, and quantum adjacency matrix $ A^{(N)} = A \otimes \id $. 

In the sequel we shall restrict ourselves to the case that $ \G $ is associated with a classical graph. Recall from paragraph \ref{parclassical} 
that if $ E = (E^0, E^1) $ is a simple finite directed classical graph then the adjacency matrix $ B_E $ of $ E $ induces canonically a directed 
quantum graph structure on $ C(E^0) $ with its unique $ \delta $-form. 

\begin{lemma} \label{matrixampqgraphrelations}
Let $ E = (E^0, E^1) $ be a simple finite directed classical graph and denote by $ \G = (C(E^0), \tr, B_E) $ the directed quantum graph 
corresponding to $ E $. Then the quantum Cuntz-Krieger algebra $ \FO(M_N(\G)) $ associated with the amplification $ M_N(\G) $ is 
the universal $ C^\ast $-algebra with generators $ S_{eij} $ for $ e \in E^0 $ and $ 1 \leq i,j \leq N $, satisfying the relations 
\begin{align*}
\sum_{rs} S_{eir} S^*_{esr} S_{esj} &= S_{eij} \\
\sum_k S^*_{eki} S_{ekj} &= \sum_k \sum_{f \in E^0} B_E(e,f) S_{fik} S_{fjk}^*.  
\end{align*}
\end{lemma}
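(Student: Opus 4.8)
The plan is to read off these relations directly from the general presentation of Proposition \ref{qckconcrete}, applied to the quantum graph $M_N(\G)$; the only substantive work is an explicit computation of the adapted matrix units and of the coefficients of the quantum adjacency matrix $A^{(N)} = B_E \otimes \id$. First I would record the finite quantum space structure underlying $M_N(\G)$. Its algebra is $C(E^0) \otimes M_N(\mathbb{C}) \cong \bigoplus_{e \in E^0} M_N(\mathbb{C})$, with blocks indexed by the vertices $e \in E^0$, and its state is the tensor product $\phi = \tr \otimes \tr$ of the two tracial $\delta$-forms. Since both factors are tracial, all the matrices $Q_{(e)}$ are scalar; a direct check gives $Q_{(e)} = \frac{1}{|E^0| N} I_N$, so $(C(E^0) \otimes M_N(\mathbb{C}), \phi)$ is automatically in standard form. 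Writing $\chi_e \in C(E^0)$ for the minimal projection at the vertex $e$ and $E_{ij}$ for the standard matrix units of $M_N(\mathbb{C})$, the adapted matrix units of Lemma \ref{mstarcomputation} are therefore just the rescalings
\[
f^{(e)}_{ij} = |E^0| N \cdot (\chi_e \otimes E_{ij}).
\]

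Next I would expand $A^{(N)}$ in this basis. Using $A^{(N)} = A \otimes \id$ with $A(\chi_e) = \sum_f B_E(e,f)\chi_f$ on $C(E^0)$, together with $\chi_f \otimes E_{ij} = \frac{1}{|E^0| N} f^{(f)}_{ij}$, the rescaling factors cancel and one obtains
\[
A^{(N)}(f^{(e)}_{ij}) = \sum_f B_E(e,f)\, f^{(f)}_{ij}.
\]
Comparing with the general expansion $A(f^{(a)}_{ij}) = \sum_{brs} A^{rsb}_{ija} f^{(b)}_{rs}$ identifies the coefficients as $A^{rsf}_{ije} = B_E(e,f)\, \delta_{ri}\delta_{sj}$.

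Finally I would substitute into the two relations furnished by Proposition \ref{qckconcrete}, writing $S_{eij} = S^{(e)}_{ij}$. The first relation there does not involve the adjacency matrix and reproduces $\sum_{rs} S_{eir} S^*_{esr} S_{esj} = S_{eij}$ verbatim. In the second relation the two Kronecker deltas collapse the sum $\sum_{frs} A^{rsf}_{ije}(\cdots)$ to $\sum_f B_E(e,f) \sum_k S_{fik} S^*_{fjk}$, which is precisely the claimed second relation. The universal properties of the two presentations then match up, yielding the asserted description of $\FO(M_N(\G))$.

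The only genuine obstacle is bookkeeping: one must carry the normalization constants — the various $\delta$-values, the factor $|E^0| N$, and the $Q$-factors — through the tensor product and the adapted matrix units and verify that every scalar cancels, so that the final relations carry no spurious constants. The point that makes this clean is precisely that both constituent states are traces, which forces each $Q_{(e)}$ to be scalar and the adapted matrix units to be mere rescalings of the standard matrix units.
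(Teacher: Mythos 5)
Your proposal is correct and follows exactly the same route as the paper's proof: identify the adapted matrix units of $ (C(E^0) \otimes M_N(\mathbb{C}), \tr \otimes \tr) $ as $ f^{(e)}_{ij} = |E^0| N \, \chi_e \otimes E_{ij} $, observe that $ A^{(N)}(f^{(e)}_{ij}) = \sum_f B_E(e,f) f^{(f)}_{ij} $, and read off the relations from Proposition \ref{qckconcrete}. The paper states this more tersely, but your computation of $ Q_{(e)} $, the coefficient identification $ A^{rsf}_{ije} = B_E(e,f)\delta_{ri}\delta_{sj} $, and the cancellation of normalizations are all the same steps, just written out in full.
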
 

\begin{proof} 
Consider the generators $ S_{eij} = S(f^{(e)}_{ij}) $ in $ \FO(M_N(\G)) $ associated with the 
adapted matrix units $ f^{(e)}_{ij} = n N \delta_e \otimes e_{ij} $, where $ e \in E^0 $ and $ n $ is the number of vertices of $ E $. 
Noting that the quantum adjacency matrix of $ M_N(\G) $ is given by 
$$ 
A^{(N)}(f^{(e)}_{ij}) = \sum_{f \in E^0} B_E(e,f) f^{(f)}_{ij}, 
$$
the assertion is a direct consequence of Proposition \ref{qckconcrete}. 
\end{proof}

We will follow arguments of McClanahan \cite{McClanahanunitarymatrix} to study the structure of the quantum Cuntz-Krieger algebras 
in Lemma \ref{matrixampqgraphrelations}. As a first step we discuss a slight strengthening of Theorem 2.3 in \cite{McClanahanunitarymatrix}. 
If $ A $ is a $ C^\ast $-algebra we write $ A^+ $ for the unital $ C^\ast $-algebra obtained by adjoining an identity element to $ A $, 
and if $ A,B $ are unital $ C^\ast $-algebras we denote by $ A *_1 B $ their unital free product. 

\begin{prop} \label{unitalfreeproductkk}
Let $ A $ be a separable $ C^\ast $-algebra. Then $ M_N(\mathbb{C}) *_1 A^+ $ is $ KK $-equivalent to $ A^+ $. 
\end{prop}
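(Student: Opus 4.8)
The plan is to avoid building explicit generators-and-relations maps and instead reduce the statement to a formal manipulation in $ KK $, using the behaviour of the full free product under the six-term exact sequence of Cuntz \cite{Cuntzfreeproduct} together with split exactness of $ KK $-theory. The essential point is that the only nontrivial input is a \emph{natural} $ KK $-equivalence, valid for arbitrary separable coefficients, so that no bootstrap or nuclearity hypothesis on $ A $ is required; this is exactly what strengthens the (essentially $ K $-theoretic) statement of \cite[Theorem 2.3]{McClanahanunitarymatrix}.

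First I would invoke Cuntz's description of the full free product amalgamated over the units. Writing $ \iota_1: \mathbb{C} \to M_N(\mathbb{C}) $ and $ \iota_2: \mathbb{C} \to A^+ $ for the unital inclusions, the results of \cite{Cuntzfreeproduct} exhibit $ M_N(\mathbb{C}) *_1 A^+ $, up to a natural $ KK $-equivalence, as the cofibre (homotopy pushout) of
$$ (\iota_1, -\iota_2): \mathbb{C} \longrightarrow M_N(\mathbb{C}) \oplus A^+, $$
the associated cofibre sequence inducing precisely the Mayer--Vietoris six-term exact sequence of the free product. I would stress that this identification is realised by honest homotopies, hence is a genuine isomorphism in $ KK $ for all separable $ A $, in contrast to a mere agreement of $ K $-groups.

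The second step is the observation that this cofibre map is a split monomorphism in $ KK $. The canonical character $ q: A^+ \to \mathbb{C} $ satisfies $ q \circ \iota_2 = \id_{\mathbb{C}} $, so the morphism $ r = (0, -q): M_N(\mathbb{C}) \oplus A^+ \to \mathbb{C} $ is a left inverse of $ (\iota_1, -\iota_2) $. Crucially this uses only that the \emph{unit} inclusion $ \iota_2 $ is split, and it completely sidesteps the fact that $ \iota_1 $ is multiplication by $ N $ on $ K_0 $ and hence not invertible. By split exactness of $ KK $-theory the cofibre is therefore $ KK $-equivalent to the complementary summand $ \ker(r) = M_N(\mathbb{C}) \oplus A $, where $ A = \ker q $. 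Morita invariance gives $ M_N(\mathbb{C}) \sim_{KK} \mathbb{C} $, and split exactness applied to the split extension $ 0 \to A \to A^+ \xrightarrow{q} \mathbb{C} \to 0 $ gives $ A^+ \sim_{KK} \mathbb{C} \oplus A $. Combining these,
$$ M_N(\mathbb{C}) *_1 A^+ \sim_{KK} M_N(\mathbb{C}) \oplus A \sim_{KK} \mathbb{C} \oplus A \sim_{KK} A^+, $$
which is the claim.

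The main obstacle is the first step: making precise the natural $ KK $-equivalence between the full free product and the cofibre of $ (\iota_1, -\iota_2) $, with no bootstrap assumption on $ A $. One may either cite Cuntz's construction directly, or, in the concrete spirit of Theorem \ref{graphversusfreegraph}, reprove it by a homotopy swindle: identify $ M_N(\mathbb{C}) *_1 A^+ $ with $ M_N(C) $ for the full corner $ C = e_{11}(M_N(\mathbb{C}) *_1 A^+)e_{11} $ via the matrix units of the first free factor, and use a rotation homotopy built from these matrix units to contract the relevant composites onto corner embeddings. Everything after this step is formal homological algebra in $ KK $, so the entire difficulty is concentrated in establishing this single naturality statement.
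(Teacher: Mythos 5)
Your argument is correct in substance, but it takes a genuinely different route from the paper. Both proofs rest on the same nontrivial input, namely the exact triangle $\mathbb{C} \xrightarrow{u} M_N(\mathbb{C}) \oplus A^+ \rightarrow M_N(\mathbb{C}) *_1 A^+ \rightarrow \Sigma\mathbb{C}$ for the full unital free product; note that for the amalgamated-over-units case this is due to Germain and Fima--Germain \cite{Germainfreeproduct}, \cite{FimaGermainamalgamated} rather than to \cite{Cuntzfreeproduct}, and the paper cites it accordingly. The difference is in how the triangle is used: the paper constructs explicit morphisms $\phi: M_N(\mathbb{C}) *_1 A^+ \rightarrow M_N(\mathbb{C}) \otimes A^+$ and $\psi$ in the other direction, verifies $[\phi]\circ[\psi] = \id$ directly, and uses the triangle only to reduce the verification of $[\psi]\circ[\phi] = \id$ to the two canonical free factors, where it is settled by rotation homotopies; you instead stay entirely inside the triangulated category $KK$ and never write down a map. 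Your key observation --- that $u = (\iota_1, -\iota_2)$ is a split monomorphism in $KK$ via $r = (0,-q)$, so the triangle splits --- is correct, but the identification of the cone with ``$\ker(r) = M_N(\mathbb{C}) \oplus A$'' is stated too loosely, since $-q$ is not a $\ast$-homomorphism and the complementary summand of a split mono is only determined as the cone, which still has to be computed. This is easily repaired: the upper-triangular $KK$-automorphism $\left(\begin{smallmatrix} \id & \iota_1 \circ q \\ 0 & \id \end{smallmatrix}\right)$ of $M_N(\mathbb{C}) \oplus A^+$ carries $u$ to $(0,-\iota_2)$, whose cone is visibly $M_N(\mathbb{C}) \oplus \mathrm{cone}(\iota_2) \cong M_N(\mathbb{C}) \oplus A$, and then Morita invariance and $A^+ \sim_{KK} \mathbb{C} \oplus A$ finish the argument exactly as you say. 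What your approach buys is brevity and a clean separation of the formal homological algebra from the one substantive input; what the paper's approach buys is an explicit pair of mutually inverse $KK$-classes, which is exploited later in the proof of Theorem \ref{amplificationmain} where one needs the equivalence to be compatible with the augmentation morphisms to $\mathbb{C}$ --- a point that would require an extra (routine but not free) verification in your formal version. Your remark that only the splitting of $\iota_2$ matters, and not the non-invertibility of $\iota_1$ on $K_0$, is exactly the right diagnosis of why the statement holds with no bootstrap hypothesis, just as in \cite{McClanahanunitarymatrix}.
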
 

\begin{proof} 
This fact is certainly known to experts, but we shall give the details for the convenience of the reader. 

Note first that $ A^+ $ is $ KK $-equivalent to the direct sum $ A \oplus \mathbb{C} $. 
This equivalence is implemented by taking the direct sum of the canonical $ * $-homomorphisms $ A \rightarrow A^+ $ 
and $ \mathbb{C} \rightarrow A^+ $ at the level of $ KK $-theory. 

We consider the unital $ * $-homomorphism $ \phi: M_N(\mathbb{C}) *_1 A^+ \rightarrow M_N(\mathbb{C}) \otimes A^+ $ given by 
$$ 
\phi(e_{ij}) = e_{ij} \otimes 1, \qquad \phi(a) = e_{11} \otimes a, 
$$ 
for $ 1 \leq i,j \leq N $ and $ a \in A $, and view this as a class $ [\phi] \in KK(M_N(\mathbb{C}) *_1 A^+, A^+) $. 

In the opposite direction we define a map $ \psi_A: A \rightarrow M_N(\mathbb{C}) \otimes (M_N(\mathbb{C}) *_1 A^+) $ by 
$$ 
\psi_A(a) = \sum_{kl} e_{kl} \otimes e_{1k} a e_{l1}. 
$$
Then 
\begin{align*}
\psi_A(a) \psi_A(b) &= \sum_{klrs} e_{kl} e_{rs} \otimes e_{1k} a e_{l1} e_{1r} b e_{s1} \\
&= \sum_{kls} e_{ks} \otimes e_{1k} a e_{l1} e_{1l} b e_{s1} \\
&= \sum_{ks} e_{ks} \otimes e_{1k} a b e_{s1} = \psi_A(ab) 
\end{align*} 
and $ \psi_A(a^*) = \psi_A(a)^* $, so that the map $ \psi_A $ is a $ * $-homomorphism. 

Consider also the $ * $-homomorphism $ \psi_\mathbb{C}: \mathbb{C} \rightarrow M_N(\mathbb{C}) \otimes (M_N(\mathbb{C}) *_1 A^+) $ 
given by $ \psi_\mathbb{C}(1) = e_{11} \otimes e_{11} $. Combining the maps $ \psi_A $ and $ \psi_\mathbb{C} $, and using that $ A^+ $ is $ KK $-equivalent 
to $ A \oplus \mathbb{C} $, we obtain a class in $ KK(A^+, M_N(\mathbb{C}) *_1 A^+) $, which we shall denote by $ [\psi] $. 

We claim that the classes $ [\phi] $ and $ [\psi] $ are mutually inverse. 
In order to determine the composition $ [\phi] \circ [\psi] \in KK(A^+, A^+) $ it suffices to 
compute $ M_N(\phi) \circ \psi_A $ and $ M_N(\phi) \circ \psi_\mathbb{C} $, respectively. 

We calculate
$$ 
(M_N(\phi) \circ \psi_A)(a) = \sum_{kl} e_{kl} \otimes \phi(e_{1k} a e_{l1}) = \sum_{kl} e_{kl} \otimes e_{1k} e_{11} e_{l1} \otimes a 
= e_{11} \otimes e_{11} \otimes a 
$$ 
for $ a \in A $ and $ (M_N(\phi) \circ \psi_\mathbb{C})(1) = M_N(\phi)(e_{11} \otimes e_{11}) = e_{11} \otimes e_{11} \otimes 1 $. 
This immediately yields $ [\phi] \circ [\psi] = \id $. 

Next consider $ [\psi] \circ [\phi] \in KK(M_N(\mathbb{C}) *_1 A^+, M_N(\mathbb{C}) *_1 A^+) $. 
Let us write $ j_{A^+}: A^+ \rightarrow M_N(\mathbb{C}) *_1 A^+ $ and $ j_{M_N(\mathbb{C})}: M_N(\mathbb{C}) \rightarrow M_N(\mathbb{C}) *_1 A^+ $ 
for the canonical inclusion homomorphisms. Moreover write $ u: \mathbb{C} \rightarrow M_N(\mathbb{C}) \oplus A^+ $ for the 
unit map. According to \cite{Germainfreeproduct}, \cite{FimaGermainamalgamated},  
the suspension of $ M_N(\mathbb{C}) *_1 A^+ $ is $ KK $-equivalent to the cone of $ u $. In order to show $ [\psi] \circ [\phi] = \id $ it therefore suffices 
to verify $ [\psi] \circ [\phi] \circ [j_{A^+}] = [j_{A^+}] $ and $ [\psi] \circ [\phi] \circ [j_{M_N(\mathbb{C})}] = [j_{M_N(\mathbb{C})}] $. 

We calculate 
$$ 
(M_N(\psi_A) \circ \phi)(a) = M_N(\psi)(e_{11} \otimes a) = \sum_{kl} e_{11} \otimes e_{kl} \otimes e_{1k} a e_{l1} 
$$ 
for $ a \in A $. 
Pick a continuous path of unitaries $ U_t $ in $ M_N(\mathbb{C}) \otimes M_N(\mathbb{C}) $ such that $ U_0 = \id $ and 
$$ 
U_1(e_k \otimes e_1) = e_1 \otimes e_k 
$$ 
for all $ k $, and push this into the last two tensor factors of $ M_N(\mathbb{C}) \otimes M_N(\mathbb{C}) \otimes (M_N(\mathbb{C}) *_1 A^+) $ via the 
obvious map. Then conjugating $ (M_N(\psi_A) \circ \phi)(a) $ by $ U_1 $ gives $ e_{11} \otimes e_{11} \otimes a $ for all $ a \in A $. 
It follows that $ [\psi] \circ [\phi] \circ [j_A] = [j_A] $, where we write $ j_A $ for the restriction of $ j_{A^+} $ to $ A \subset A^+ $. 

Next we calculate 
$$ 
(M_N(\psi_\mathbb{C}) \circ \phi)(1) = M_N(\psi_\mathbb{C})(1 \otimes 1) = 1 \otimes e_{11} \otimes e_{11}.  
$$ 
Conjugating this with the unitary $ U_1 $ from above, pushed into the first and third tensor factors, gives $ e_{11} \otimes e_{11} \otimes 1 $. 
Hence $ [\psi] \circ [\phi] \circ [j_\mathbb{C}] = [j_\mathbb{C}] $, where $ j_\mathbb{C} $ denotes the restriction of $ j_{A^+} $ 
to $ \mathbb{C} \subset A^+ $. Combining these two observations gives $ [\psi] \circ [\phi] \circ [j_{A^+}] = [j_{A^+}] $. 

Finally, we have 
$$ 
(M_N(\psi_\mathbb{C}) \circ \phi)(e_{ij}) = M_N(\psi_\mathbb{C})(e_{ij} \otimes 1) = e_{ij} \otimes e_{11} \otimes e_{11}, 
$$ 
so that conjugating $ (M_N(\psi_\mathbb{C}) \circ \phi)(e_{ij}) $ by $ U_1 $ in the first and third tensor factors 
gives $ e_{11} \otimes e_{11} \otimes e_{ij} $ for all $ i,j $. 
We conclude $ [\psi] \circ [\phi] \circ [j_{M_N(\mathbb{C})}] = [j_{M_N(\mathbb{C})}] $, and this finishes the proof. 
\end{proof} 

With these preparations in place let us now present our main result on amplified quantum Cuntz-Krieger algebras. 

\begin{theorem} \label{amplificationmain}
Assume that $ E = (E^0, E^1) $ is a finite directed simple graph and let $ \G = (C(E^0), \tr, B_E) $ be the corresponding 
directed quantum graph. Then the following holds. 
\begin{bnum} 
\item[a)] We have an isomorphism $ M_N(\FO(M_N(\G))^+) \cong M_N(\mathbb{C}) *_1 (\FO(\G)^+) $. 
\item[b)] $ \FO(M_N(\G)) $ is $ KK $-equivalent to the classical Cuntz-Krieger algebra $ \O_{B_E} $. 
\end{bnum} 
\end{theorem}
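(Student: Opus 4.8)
The plan is to establish part (a) first and then deduce (b) from it together with Propositions \ref{unitalfreeproductkk} and \ref{qgraphalgebraclassical} and Theorem \ref{graphversusfreegraph}. The starting observation is that, writing $S_e = (S_{eij}) \in M_N(\FO(M_N(\G)))$ for the matrix formed by the generators attached to a vertex $e \in E^0$, the two relations of Lemma \ref{matrixampqgraphrelations} say exactly that $S_e S_e^* S_e = S_e$ and $S_e^* S_e = \sum_f B_E(e,f) S_f S_f^*$. Thus the $S_e$ are matrix partial isometries satisfying precisely the defining relations of the free Cuntz-Krieger algebra $\FO_{B_E} \cong \FO(\G)$ from Proposition \ref{qgraphalgebraclassical}, now realised inside $M_N(\FO(M_N(\G)))$ rather than in scalars. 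This matrix-amplified reformulation is what makes the free-product structure visible.

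For part (a) I would run the corner argument underlying \cite{McClanahanunitarymatrix}. Set $C = M_N(\mathbb{C}) *_1 \FO(\G)^+$, write $e_{ij}$ for the matrix units of the unital copy of $M_N(\mathbb{C})$ and $T_e$ for the images of the generators of $\FO(\G) = \FO_{B_E}$. Since $\sum_i e_{ii} = 1_C$, the $e_{ij}$ form a full system of $N \times N$ matrix units, so $C \cong M_N(e_{11} C e_{11})$ via $x \mapsto (e_{1i} x e_{j1})_{ij}$, and it suffices to identify the corner $e_{11}Ce_{11}$. I would show that the elements $\sigma_{eij} := e_{1i} T_e e_{j1}$ satisfy the defining relations of $\FO(M_N(\G))$: a direct computation using $\sum_r e_{rr} = 1$, the partial isometry identity $T_e T_e^* T_e = T_e$, and $T_e^* T_e = \sum_f B_E(e,f) T_f T_f^*$ yields $\sum_{rs}\sigma_{eir}\sigma_{esr}^*\sigma_{esj} = \sigma_{eij}$ and $\sum_k \sigma_{eki}^*\sigma_{ekj} = \sum_k \sum_f B_E(e,f)\sigma_{fik}\sigma_{fjk}^*$. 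Since these $\sigma_{eij}$ together with $e_{11}$ generate $e_{11}Ce_{11}$, the universal property produces a surjection $\FO(M_N(\G))^+ \to e_{11}Ce_{11}$.

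To see this surjection is an isomorphism I would build its inverse from the homomorphism $\Phi: C \to M_N(\FO(M_N(\G))^+)$ determined by $\Phi(e_{ij}) = e_{ij}\otimes 1$ and $\Phi(T_e) = S_e = \sum_{kl} e_{kl}\otimes S_{ekl}$; this is well defined by the universal property of the unital free product, exactly because the $S_e$ satisfy the $\FO_{B_E}$-relations recorded in the first paragraph. Restricting $\Phi$ to the corner lands in $(e_{11}\otimes 1)M_N(\FO(M_N(\G))^+)(e_{11}\otimes 1) \cong \FO(M_N(\G))^+$ and sends $\sigma_{eij}\mapsto S_{eij}$ and $e_{11}\mapsto 1$. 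Checking on generators that the two maps are mutually inverse gives $e_{11}Ce_{11}\cong \FO(M_N(\G))^+$, hence $M_N(\mathbb{C})*_1\FO(\G)^+ \cong M_N(\FO(M_N(\G))^+)$, which is (a).

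For part (b), matrix stability of $KK$ applied to (a) shows $\FO(M_N(\G))^+$ is $KK$-equivalent to $M_N(\FO(M_N(\G))^+) \cong M_N(\mathbb{C})*_1\FO(\G)^+$, and Proposition \ref{unitalfreeproductkk} with $A = \FO(\G)$ identifies the latter, up to $KK$-equivalence, with $\FO(\G)^+$. One then descends from unitizations: using the canonical split extension $0 \to D \to D^+ \to \mathbb{C}\to 0$, the resulting $KK$-equivalence $\FO(M_N(\G))^+ \to \FO(\G)^+$ is compatible with the augmentations to $\mathbb{C}$ (as can be read off from $\Phi$, which annihilates the generators modulo the matrix part), and a triangular-matrix argument in the additive category $KK$ then forces a $KK$-equivalence of the complementary summands, namely $\FO(M_N(\G))$ and $\FO(\G)$. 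Finally $\FO(\G) = \FO_{B_E}$ is $KK$-equivalent to $\O_{B_E}$ by Theorem \ref{graphversusfreegraph}, giving (b). The main obstacle is part (a): the conceptual step is recognising the corner/free-product structure, after which the relation checks are routine; in (b) the only delicate point is verifying augmentation-compatibility so that the descent to the non-unital algebras is legitimate.
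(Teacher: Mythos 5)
Your proof is correct and follows essentially the same route as the paper: part (a) is the McClanahan-style argument with the same two maps (your $\Phi$ is the paper's $F$, and your corner elements $e_{1i}T_e e_{j1}$ correspond to the paper's $g(S_{eij})=\sum_k e_{ki}S_e e_{jk}$ under the canonical identification of $e_{11}Ce_{11}$ with the relative commutant of $M_N(\mathbb{C})$), and part (b) is the same deduction via Proposition \ref{unitalfreeproductkk}, augmentation-compatibility, and Theorem \ref{graphversusfreegraph}.
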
 

\begin{proof} 
$ a) $ In the sequel we shall write $ C = M_N(\mathbb{C}) *_1 (\FO(\G)^+) $ and $ D = \FO(M_N(\G)) $. 

We define a $ \ast $-homomorphism $ g: D \rightarrow C $ by 
$$
g(S_{eij}) = \sum_k e_{ki} S_e e_{jk}  
$$
on generators. To check that this is well-defined we use Lemma \ref{matrixampqgraphrelations} to calculate 
\begin{align*}
\sum_{kl} g(S_{eik}) g(S_{elk})^* g(S_{elj}) &= \sum_{rstkl} e_{ri} S_e e_{kr} e_{sk} S_e^* e_{ls} e_{tl} S_e e_{jt} \\
&= \sum_{rkl} e_{ri} S_e e_{kk} S_e^* e_{ll} S_e e_{jr} \\
&= \sum_r e_{ri} S_e S_e^* S_e e_{jr} \\
&= \sum_r e_{ri} S_e e_{jr} \\
&= g(S_{eij}) 
\end{align*}
and 
\begin{align*}
\sum_k g(S_{eki})^* g(S_{ekj}) &= \sum_{rsk} e_{ri} S_e^* e_{kr} e_{sk} S_e e_{js} \\
&= \sum_r e_{ri} S_e^* S_e e_{jr} \\
&= \sum_r \sum_{f \in E^0} B_E(e,f) e_{ri} S_f S_f^* e_{jr} \\
&= \sum_k \sum_{f \in E^0} B_E(e,f) g(S_{fik}) g(S_{fjk})^*
\end{align*}
for $ e \in E^0 $ and $ 1 \leq i,j \leq N $. 

Let $ g^+: D^+ \rightarrow C $ be the unital extension of $ g $. It is easy to see that the image of $ g^+ $ is contained in the relative 
commutant $ M_N(\mathbb{C})' $ of $ M_N(\mathbb{C}) $ inside the free product. In fact, we have 
$$ 
g(S_{eij}) e_{kl} = \sum_r e_{ri} S_e e_{jr} e_{kl} = e_{ki} S_e e_{jl} = \sum_r e_{kl} e_{ri} S_e e_{jr} = e_{kl} g(S_{eij})  
$$ 
for all $ i,j,k,l $. 
We can thus extend $ g^+ $ to a unital $ \ast $-homomorphism $ G: M_N(D^+) \rightarrow C $ by setting $ G(e_{ij}) = e_{ij} $ 
and $ G(x) = g(x) $ for $ x \in D^+ $. 

Let us also define a unital $ \ast $-homomorphism $ F: C \rightarrow M_N(D^+) = D^+ \otimes M_N(\mathbb{C}) $ by 
\begin{align*}
F(e_{ij}) &= 1 \otimes e_{ij} \\
F(S_e) &= \sum_{ij} S_{eij} \otimes e_{ij}. 
\end{align*}
To see that this is well-defined we only need to check that these formulas define unital $ \ast $-homomorphisms from $ M_N(\mathbb{C}) $ 
and $ \FO(\G)^+ $ into $ M_N(D^+) $, respectively. For $ M_N(\mathbb{C}) $ this is obvious. For $ \FO(\G)^+ $ we need to check the free Cuntz-Krieger 
relations for the elements $ F(S_e) $. In fact, each $ F(S_e) $ is a partial isometry by construction, and using Lemma \ref{matrixampqgraphrelations} we calculate
\begin{align*}
F(S_e)^* F(S_e) &= \sum_{ijkl} S_{eij}^* S_{ekl} \otimes e_{ji} e_{kl} \\
&= \sum_{ijl} S_{eij}^* S_{eil} \otimes e_{jl} \\
&= \sum_{ijl} \sum_{f \in E^0} B_E(e,f) S_{fji} S_{fli}^* \otimes e_{jl} \\
&= \sum_{f \in E^0} B_E(e,f) \sum_{ijkl} S_{fji} S_{flk}^* \otimes e_{ji} e_{kl} \\
&= \sum_{f \in E^0} B_E(e,f) F(S_f) F(S_f)^* 
\end{align*}  
as required. 

Next observe that $ F \circ G: M_N(D^+) \rightarrow M_N(D^+) $ satisfies 
\begin{align*} 
(F \circ G)(S_{eij} \otimes 1) &= \sum_k F(e_{ki}) F(S_e) F(e_{jk}) \\
&= \sum_{krs} (1 \otimes e_{ki}) (S_{ers} \otimes e_{rs}) (1 \otimes e_{jk}) = S_{eij} \otimes 1 
\end{align*}
for all $ e \in E^0 $ and $ (F \circ G)(e_{ij}) = e_{ij} $ for all $ i,j $. This implies $ F \circ G = \id $. 
Similarly, we have 
\begin{align*} 
(G \circ F)(S_e) &= \sum_{ij} G(S_{eij} \otimes e_{ij}) = \sum_{ij} e_{ii} S_e e_{jj} = S_e 
\end{align*}
for all $ e \in E^0 $, and $ (G \circ F)(e_{ij}) = e_{ij} $ for all $ i,j $. We conclude that $ G \circ F = \id $. 

$ b) $ Clearly $ M_N(\FO(M_N(\G))^+) $ is $ KK $-equivalent to $ \FO(M_N(\G))^+ $. 
According to Proposition \ref{unitalfreeproductkk}, we also know that $ M_N(\mathbb{C}) *_1 (\FO(\G)^+) $ is $ KK $-equivalent to $ \FO(\G)^+ $. 
It is easy to check that these equivalences are both compatible with the canonical augmentation morphisms to $ \mathbb{C} $. 
Hence $ \FO(M_N(\G)) $ is $ KK $-equivalent to $ \FO(\G) $. Finally, recall from Theorem \ref{graphversusfreegraph} that the free 
Cuntz-Krieger algebra $ \FO(\G) = \FO_{B_E} $ is $ KK $-equivalent to $ \O_{B_E} $. 
\end{proof}

Under some mild extra assumptions, Theorem \ref{amplificationmain} allows one to compute the $ K $-theory of $ \FO(M_N(\G)) $ in terms of the graph $ E $, 
see \cite{Cuntzmarkov2} and chapter 7 in \cite{Raeburngraph}. 

Finally, remark that if $ E $ is the graph with one vertex and one self-loop then we have $ \FO(\G) = \FO_{B_E} = C(S^1) $, 
and $ \FO(M_N(\G)) = \FO(TM_N) $ is the quantum Cuntz-Krieger algebra of the trivial quantum graph on $ M_N(\mathbb{C}) $. 
Therefore Theorem \ref{amplificationmain} implies Theorem \ref{Ktheoryquantumtrivial}.

\section{Quantum symmetries of quantum Cuntz-Krieger algebras} \label{secquantumsymmetry} 

In this section we study how quantum symmetries and quantum isomorphisms of directed quantum graphs induce symmetries of their associated 
quantum Cuntz-Krieger algebras. This will be useful in particular to exhibit relations between the $ C^\ast $-algebras corresponding to quantum 
isomorphic quantum graphs. 

\subsection{Gauge actions} \label{pargauge}

Before discussing quantum symmetries, let us first show that there is a canonical gauge action on quantum Cuntz-Krieger algebras, thus 
providing very natural classical symmetries. This is analogous to the well-known gauge action on Cuntz-Krieger algebras and graph $ C^\ast $-algebras, which plays 
a crucial role in the analysis of the structure of these $ C^\ast $-algebras, compare \cite{Raeburngraph}. 

Let $ \G = (B, \psi, A) $ be a directed quantum graph, and let $ \FO(\G) $ be the corresponding quantum Cuntz-Krieger algebra. 
For $ \lambda \in U(1) $ consider the linear map $ S_\lambda: B \rightarrow \FO(\G) $ given by 
$$ 
S_\lambda(b) = \lambda S(b), 
$$
where $ S: B \rightarrow \FO(\G) $ is the canonical linear map. Then we 
have $ S_\lambda^*(b) = (\lambda S(b^*))^* = \overline{\lambda} S^*(b) $ for all $ b \in B $, 
and using this relation it is easy to check that $ S_\lambda: B \rightarrow \FO(\G) $ is a quantum Cuntz-Krieger $ \G $-family. 
By the universal property of $ \FO(\G) $ we obtain a corresponding automorphism $ \alpha_\lambda \in \Aut(\FO(\G)) $, 
and these automorphisms combine to a strongly continuous action of $ U(1) $ on $ \FO(\G) $. 

In terms of the generators of $ \FO(\G) $ as in Proposition \ref{qckconcrete} the gauge action is given by 
$$
\alpha_\lambda(S^{(a)}_{ij}) = \lambda S^{(a)}_{ij},  
$$
from which it is easy to determine the action on arbitrary noncommutative polynomials in the generators, and the 
decomposition into spectral subspaces. 

In some cases one may define more general gauge type actions. For instance, for the complete quantum graph $ K(M_N(\mathbb{C}), \tr) $ from paragraph \ref{parQKn} 
and the trivial quantum graph $ TM_N $ from paragraph \ref{parQTn} we have an action of $ U(1) \times U(1)^N $, given by 
$$
\alpha_{\lambda \mu}(S_{ij}) = \lambda \frac{\mu_i}{\mu_j} \,S_{ij} 
$$
on generators. In fact, one may even extend this to an action of $ U(1) \times U(N) $ by setting
$$
\alpha_{\lambda U}(S) = \lambda U S U^*, 
$$
where $ S = (S_{ij}) $ is the generating matrix partial isometry. 

However, none of the above actions seems to suffice to obtain structural information about quantum Cuntz-Krieger algebras in the same way 
as for classical graph algebras. In particular, the corresponding fixed point algebras tend to have a more complicated structure than in 
the classical setting. 

It turns out that this deficiency can be compensated to some extent by considering actions of compact quantum groups instead, and in 
particular symmetries arising from suitable monoidal equivalences between quantum automorphism groups of directed quantum graphs. We will explain these  
constructions in the following paragraphs.

\subsection{Compact quantum groups} 

Let us first give a quick review of the definition of compact quantum groups and their action on $ C^\ast $-algebras. For more background and further information 
we refer to \cite{Woronowiczleshouches}, \cite{NTlecturenotes}. 

A compact quantum group $ G $ is given by a unital $ C^\ast $-algebra $ C(G) $ together with a 
unital $ \ast $-homomorphism $ \Delta: C(G) \rightarrow C(G) \otimes C(G) $ 
such that $ (\Delta \otimes \id) \Delta = (\id \otimes \Delta) \Delta $ and the density conditions
$$ 
[\Delta(C(G)) (C(G) \otimes 1)] = C(G) \otimes C(G) = [\Delta(C(G)) (1 \otimes C(G))] 
$$
hold. 

We will mainly work with the canonical dense Hopf $ \ast $-algebra $ \Poly(G) \subset C(G) $, consisting of the matrix coefficients 
of all finite dimensional unitary representations of $ G $. For the definition of unitary representations and their intertwiners see \cite{NTlecturenotes}. 
The collection of all finite dimensional unitary representations of $ G $ forms naturally a $ C^\ast $-tensor category $ \Rep(G) $. 

On the $ C^\ast $-level we will only consider the universal completions of $ \O(G) $ in the sequel, and always denote them by $ C(G) $. 
With this in mind, a morphism $ H \rightarrow G $ of compact quantum groups is nothing but a $ \ast $-homomorphism $ C(G) \rightarrow C(H) $ 
compatible with the comultiplications. Equivalently, such a morphism is given by a homomorphism $ \Poly(G) \rightarrow \Poly(H) $ of 
Hopf $ \ast $-algebras. One says that $ H $ is a quantum subgroup of $ G $ if there exists a morphism $ H \rightarrow G $ 
such that the corresponding homomorphism of Hopf $ \ast $-algebras $ \Poly(G) \rightarrow \Poly(H) $ is surjective. 

By definition, an action of a compact quantum group $ G $ on a $ C^\ast $-algebra $ A $ is a $ \ast $-homomorphism $ \alpha: A \rightarrow A \otimes C(G) $ 
satisfying $ (\alpha \otimes \id)\alpha = (\id \otimes \Delta)\alpha $ and the density condition $ [(1 \otimes C(G)) \alpha(A)] = A \otimes C(G) $. 
A $ C^\ast $-algebra $ A $ equipped with an action of $ G $ will also be called a $ G $-$ C^\ast $-algebra. 
Every $ G $-$ C^\ast $-algebra $ A $ contains a canonical dense $ \ast $-subalgebra $ \A \subset A $, given by 
the algebraic direct sum of the spectral subspaces of the action. Moreover, the map $ \alpha $ restricts to 
a $ \ast $-homomorphism $ \alpha: \A \rightarrow \A \otimes \Poly(G) $, and this defines an algebra coaction in the sense of Hopf algebras. 
In particular, one has $ (\id \otimes \epsilon)\alpha(a) = a $ for all $ a \in \A $, where $ \epsilon: \Poly(G) \rightarrow \mathbb{C} $ 
is the counit. 

If $ A $ is a $ G $-$ C^\ast $-algebra then the fixed point subalgebra of $ A $ is defined by 
$$ 
A^G = \{a \in A \mid \alpha(a) = a \otimes 1 \},  
$$
and a unital $ G $-$ C^\ast $-algebra $ A $ is called ergodic if $ A^G = \mathbb{C} 1 $. 
The same terminology is also used for $ \ast $-algebras equipped with algebra coactions of $ \Poly(G) $. 

Let us now review the definition of quantum automorphism groups of finite quantum spaces in the sense of Definition \ref{deffqs}. 
These quantum groups were introduced by Wang \cite{Wangqsymmetry} and studied further by Banica \cite{Banicageneric} and others. 
If $ G $ is a compact quantum group and $ \omega: A \rightarrow \mathbb{C} $ a state on a $ G $-$ C^\ast $-algebra $ A $ with 
action $ \alpha: A \rightarrow A \otimes C(G) $, then we say that the action preserves $ \omega $ if 
$$
(\omega \otimes \id) \alpha(a) = \omega(a) 1 
$$
for all $ a \in A $.  

\begin{definition} \label{defqut}
Let $ (B, \psi) $ be a finite quantum space. The quantum automorphism group of $ (B, \psi) $ is the universal compact quantum group $ G^+(B,\psi) $ 
equipped with an action $ \beta: B \rightarrow B \otimes C(G^+(B, \psi)) $ which preserves $ \psi $. 
\end{definition} 

In other words, if $ G $ is a compact quantum group and $ \gamma: B \rightarrow B \otimes C(G) $ an action of $ G $ preserving $ \psi $, then 
there exists a unique $ * $-homomorphism $ \pi: C(G^+(B, \psi)) \rightarrow C(G) $, compatible with the comultiplications, such that the diagram 
$$
\xymatrix{
B \ar@{->}[r]^{\!\!\!\!\!\!\!\!\!\!\!\!\!\!\!\!\!\!\!\!\!\! \beta} \ar@{->}[rd]_{\gamma} & B \otimes C(G^+(B, \psi)) \ar@{->}[d]^{\id \otimes \pi} \\ 
& B \otimes C(G) 
}
$$
is commutative. 

The most prominent example of a quantum automorphism group is the quantum permutation group $ S_N^+ $. This is the quantum automorphism 
group of $ B = \mathbb{C}^N $ with its unique $ \delta $-form. The corresponding $ C^\ast $-algebra $ C(S_N^+) = C(G^+(\mathbb{C}^N, \tr)) $ 
is the universal $ C^\ast $-algebra generated by projections $ u_{ij} $ for $ 1 \leq i,j \leq N $ such that 
$$
\sum_k u_{ik} = 1 = \sum_k u_{kj} 
$$
for all $ i,j $. These relations can be phrased by saying that the matrix $ u = (u_{ij}) $ is a magic 
unitary. The comultiplication $ \Delta: C(S_N^+) \rightarrow C(S_N^+) \otimes C(S_N^+) $ is defined by
$$
\Delta(u_{ij}) = \sum_{k = 1}^n u_{ik} \otimes u_{kj} 
$$
on generators. 

\begin{remark} 
Quantum automorphism groups can always be described explicitly in terms of generators and relations, see Proposition 2.10 in \cite{Mrozinskiso3deformations}. 
More precisely, let us assume that $ (B, \psi) $ is a finite quantum space in standard form as in section \ref{parqgraph}, so that 
$$ 
B = \bigoplus_{a = 1}^d M_{N_a}(\mathbb{C}), \qquad \psi(x) = \sum_{a = 1}^d \Tr(Q_{(a)} x_a)
$$
for $ x = (x_1, \dots, x_d) \in B $. Then the Hopf $ \ast $-algebra $ \Poly(G^+(B, \psi)) $ is generated 
by elements $ v_{ija}^{rsb} $ for $ 1 \leq a, b \leq d $ and $ 1 \leq i,j \leq N_a, 1 \leq r,s \leq N_b $, satisfying the relations 

\begin{itemize}
\item[(A1a)] $ \sum_w v_{kla}^{xwc} v_{rsb}^{wyc} = \delta_{ab} \delta_{lr} v_{ksa}^{xyc} $
\item[(A1b)] $ \sum_w (Q_{(c)})^{-1}_{ww} v^{srb}_{ywc} v^{lka}_{wxc} = \delta_{lr} \delta_{ab} (Q_{(a)})^{-1}_{ll} v^{ska}_{yxc} $
\item[(A2)] $ (v_{kla}^{xyc})^* = v_{lka}^{yxc} $
\item[(A3a)] $ \sum_{xb} (Q_{(b)})_{xx} v^{xxb}_{kla} = \delta_{kl} (Q_{(a)})_{kk} $
\item[(A3b)] $ \sum_{ka} v^{xyb}_{kka} = \delta_{xy} $.
\end{itemize}

In terms of the standard matrix units $ e^{(a)}_{ij} $ for $ B $ and the generators $ v_{ija}^{rsb} $, the defining 
action $ \beta: B \rightarrow B \otimes \Poly(G^+(B,\psi)) $ is given by 
$$ 
\beta(e^{(a)}_{ij}) = \sum_{bkl} e^{(b)}_{kl} \otimes v_{ija}^{klb}, 
$$
and the matrix $ v = (v_{ija}^{rsb}) $ is also called the fundamental matrix of $ G^+(B, \psi) $. 

We will reobtain the above description of the $ \ast $-algebra $ \Poly(G^+(B, \psi)) $ as a special case of Proposition \ref{qisorelations} below. 
\end{remark}

\subsection{Quantum symmetries of quantum graphs} \label{parqsym}

In this paragraph we discuss the quantum automorphism group of a directed quantum graph, and also quantum isomorphisms relating a pair
of directed quantum graphs. 

Recall first that if $ E = (E^0, E^1) $ is a simple finite 
graph then the automorphism group $ \Aut(E) $ consists of all bijections of $ E^0 $ which preserves the adjacency relation in $ E $. 
If $ |E^0| = N $ and $ A \in M_N(\mathbb{Z}) $ is the adjacency matrix of $ E $, then this can be expressed as 
$$
\Aut(E) = \{\sigma \in S_N \mid \sigma A = A \sigma\} \subset S_N, 
$$
where one views elements of the symmetric group as permutation matrices. In \cite{Banicaqutgraph}, Banica defined the quantum automorphism group $ G^+(E) $ 
of the graph $ E $ via the $ C^\ast $-algebra
$$
C(G^+(E)) = C(S_N^+)/\langle u A = A u \rangle, 
$$
where $ u = (u_{ij}) \in M_N(C(S_N^+)) $ denotes the defining magic unitary matrix. This yields a quantum subgroup of $ S_N^+ $, which contains the 
classical automorphism group $ \Aut(E) $ as a quantum subgroup. 

If $ \G = (B, \psi, A) $ is a directed quantum graph we shall say that an action $ \beta: B \rightarrow B \otimes C(G) $ 
of a compact quantum group $ G $ is compatible with $ A: B \rightarrow B $ if $ \beta \circ A = (A \otimes \id) \circ \beta $. 
Motivated by the considerations in \cite{Banicaqutgraph}, we define the quantum automorphism group of a directed quantum graph as follows, 
compare \cite{BCEHPSWbigalois}. 

\begin{definition}
Let $ \G = (B, \psi, A) $ be a directed quantum graph. The quantum automorphism group $ G^+(\G) $ of $ \G $ is the universal compact quantum 
group equipped with a $ \psi $-preserving action $ \beta: B \rightarrow B \otimes C(G^+(\G)) $ which is compatible with the quantum adjacency matrix $ A $. 
\end{definition} 

That is, if $ G $ is a compact quantum group and $ \gamma: B \rightarrow B \otimes C(G) $ an action of $ G $ which preserves $ \psi $ 
and is compatible with $ A $, then there exists a unique $ * $-homomorphism $ \pi: C(G^+(\G)) \rightarrow C(G) $, compatible with the comultiplications, 
such that the diagram 
$$
\xymatrix{
B \ar@{->}[r]^{\!\!\!\!\!\!\!\!\!\!\!\!\!\!\!\!\!\!\!\!\!\! \beta} \ar@{->}[rd]_{\gamma} & B \otimes C(G^+(\G)) \ar@{->}[d]^{\id \otimes \pi} \\ 
& B \otimes C(G) 
}
$$
is commutative. 

Comparing this with Definition \ref{defqut}, it is straightforward to check that $ C(G^+(\G)) $ can be identified with the quotient of $ C(G^+(B,\psi)) $ 
obtained by imposing the relation $ (1 \otimes A) v = v (1 \otimes A) $ on the fundamental matrix $ v $ of $ G^+(B,\psi) $. 

\begin{remark} 
If $ \G = K(M_N(\mathbb C), \tr) $ or $ \G = TM_N $ is the complete or the trivial quantum graph on $ M_N(\mathbb{C}) $, then it is easy to see that 
compatibility with the quantum adjacency matrix is in fact automatic. That is, we have $ G^+(\G) = G^+(M_N(\mathbb{C}), \tr) $ in either case. 
\end{remark} 

Let us recall that two compact quantum groups $ G_1, G_2 $ are called monoidally equivalent if their representation categories $ \Rep(G_1) $ and $ \Rep(G_2) $ 
are unitarily equivalent as $ C^* $-tensor categories \cite{BdRV}, \cite{NTlecturenotes}. A monoidal equivalence is a unitary tensor 
functor $ F: \Rep(G^+(\G_1)) \rightarrow \Rep(G^+(\G_2)) $ whose underlying functor is an equivalence.  

Assume that $ \G_i = (B_i, \psi_i, A_i) $ are directed quantum graphs for $ i = 1,2 $. Then the quantum automorphism group $ G^+(\G_i) $ is a 
quantum subgroup of $ G^+(B_i, \psi_i) $ such that the quantum adjacency matrix $ A_i $ is an intertwiner 
for the defining representation $ B_i = L^2(B_i) $ of $ G^+(\G_i) $. Note also that the multiplication map $ m_i: B_i \otimes B_i \rightarrow B_i $ 
and the unit map $ u_i: \mathbb{C} \rightarrow B_i $ are intertwiners for the action of $ G^+(\G_i) $, so that $ B_i $ 
becomes a monoid object in the tensor category $ \Rep(G^+(\G_i)) $. 

In analogy to \cite{BCEHPSWbigalois} we give the following definition. 

\begin{definition} \label{defqiso}
Two directed quantum graphs $ \G_i = (B_i, \psi_i, A_i) $ for $ i = 1,2 $ are quantum isomorphic if there exists a monoidal 
equivalence $ F: \Rep(G^+(\G_1)) \rightarrow \Rep(G^+(\G_2)) $ such that 
\begin{bnum} 
\item[a)] $ F $ maps the monoid object $ B_1 $ to the monoid object $ B_2 $. 
\item[b)] $ F(A_1) = A_2 $.  
\end{bnum} 
We will write $ \G_1 \cong_q \G_2 $ in this case. 
\end{definition} 

From Definition \ref{defqiso} it is easy to see that the notion of quantum isomorphism is an equivalence relation on isomorphism classes of directed 
quantum graphs. For concrete computations it is however more convenient to describe quantum isomorphisms in terms of bi-Galois 
objects \cite{BCEHPSWbigalois}, sometimes also called linking algebras.  

Concretely, if $ \G_i = (B_i, \psi_i, A_i) $ for $ i = 1,2 $ are directed quantum graphs then $ \O(G^+(\G_2, \G_1)) $ is the bi-Galois object 
generated by the coefficients of a unital $ \ast $-homomorphism 
$$
\beta_{ji}: B_i \to B_j \otimes \Poly(G^+(\G_j, \G_i)) 
$$ 
satisfying the conditions
$$
(\psi_j \otimes \id) \beta_{ji}(x) = \psi_i(x)1 
$$
for all $ x \in B_i $ and 
$$ 
(A_j \otimes \id) \beta_{ji} = \beta_{ji} A_i. 
$$ 
Note that these conditions generalize the requirements on the action of the quantum automorphism group of a quantum graph to be state-preserving and 
compatible with the quantum adjacency matrix, respectively. 

We write $ C(G^+(\G_j, \G_i)) $ for the universal enveloping $ C^\ast $-algebra of $ \Poly(G^+(\G_j, \G_i)) $. 
In exactly the same way as in \cite{BCEHPSWbigalois} one then arrives at the following characterization of quantum isomorphisms.  

\begin{theorem} \label{starbigaloischar}
Let $ \G_1, \G_2 $ be directed quantum graphs. Then the following conditions are equivalent. 
\begin{bnum} 
\item[a)] $ \G_1 $ and $ \G_2 $ are quantum isomorphic.  
\item[b)] $ \Poly(G^+(\G_2, \G_1)) $ is non-zero. 
\item[c)] $ \Poly(G^+(\G_2, \G_1)) $ admits a nonzero faithful state. 
\item[d)] $ C(G^+(\G_2, \G_1)) $ is non-zero. 
\end{bnum}
\end{theorem}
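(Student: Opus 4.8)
The plan is to follow the bi-Galois strategy of \cite{BCEHPSWbigalois} essentially verbatim, the only genuinely new input being compatibility with the quantum adjacency matrices $ A_i $, which is handled in exactly the same way as preservation of the monoid structure on the $ B_i $. The engine driving the argument is the theory of monoidal equivalence from \cite{BdRV}: a monoidal equivalence between two compact quantum groups is the same datum as a unital $ \ast $-algebra carrying two commuting ergodic coactions of full quantum multiplicity, a so-called bi-Galois object, and every such object admits a unique invariant state, which is faithful on its spectral subalgebra. I would prove the four conditions equivalent through the cycle $ (a) \Rightarrow (b) \Rightarrow (c) \Rightarrow (d) \Rightarrow (b) $ together with $ (c) \Rightarrow (a) $, thereby reducing the whole statement to the single assertion that a \emph{nonzero} linking algebra $ \Poly(G^+(\G_2, \G_1)) $ is automatically a bi-Galois object.

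For $ (a) \Rightarrow (b) $, I would begin with a monoidal equivalence $ F: \Rep(G^+(\G_1)) \to \Rep(G^+(\G_2)) $ as in Definition \ref{defqiso}. By \cite{BdRV} the functor $ F $ is induced by a nonzero bi-Galois object $ \B $, and the Tannaka--Krein reconstruction yields a unital $ \ast $-homomorphism $ \beta_{21}: B_1 \to B_2 \otimes \B $ whose coefficients satisfy precisely the state-preservation and adjacency-compatibility relations defining $ \Poly(G^+(\G_2, \G_1)) $. Here condition (a) of Definition \ref{defqiso} guarantees that $ F $ carries the monoid object $ B_1 $ to $ B_2 $, which is what makes $ \beta_{21} $ unital and multiplicative, while condition (b), namely $ F(A_1) = A_2 $, produces the intertwining relation $ (A_2 \otimes \id)\beta_{21} = \beta_{21} A_1 $. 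The universal property of $ \Poly(G^+(\G_2, \G_1)) $ then furnishes a unital, hence nonzero, $ \ast $-homomorphism $ \Poly(G^+(\G_2, \G_1)) \to \B $, so that $ \Poly(G^+(\G_2, \G_1)) \neq 0 $.

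The heart of the matter, and the step I expect to be the main obstacle, is the implication $ (b) \Rightarrow (c) $, where one must upgrade the purely algebraic nonvanishing of $ \Poly(G^+(\G_2, \G_1)) $ to the statement that it is a genuine bi-Galois object. The relations defining $ \beta_{21} $ equip $ \Poly(G^+(\G_2, \G_1)) $ with commuting right and left coactions of $ G^+(\G_1) $ and $ G^+(\G_2) $, and following \cite{BCEHPSWbigalois} one checks that the universal relations force these coactions to be ergodic and of full quantum multiplicity, the nonvanishing hypothesis being exactly what prevents the spectral subspaces from degenerating. Granting this, $ \Poly(G^+(\G_2, \G_1)) $ is a bi-Galois object, and the unique invariant state of an ergodic action, faithful on the spectral subalgebra, supplies the faithful state required by (c). The delicate point is precisely the verification of ergodicity and full multiplicity from the relations alone.

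The remaining implications are formal. For $ (c) \Rightarrow (d) $, the GNS representation associated with a faithful state is faithful, so the completion $ C(G^+(\G_2, \G_1)) $ is nonzero. For $ (d) \Rightarrow (b) $, since $ C(G^+(\G_2, \G_1)) $ is by definition the universal enveloping $ C^\ast $-algebra of $ \Poly(G^+(\G_2, \G_1)) $, its nonvanishing forces $ \Poly(G^+(\G_2, \G_1)) \neq 0 $. Finally, for $ (c) \Rightarrow (a) $, once $ \Poly(G^+(\G_2, \G_1)) $ is known to be a bi-Galois object the correspondence of \cite{BdRV} returns a monoidal equivalence $ F: \Rep(G^+(\G_1)) \to \Rep(G^+(\G_2)) $; the intertwiner relations for the multiplication and unit maps show that $ F $ sends the monoid object $ B_1 $ to $ B_2 $, and the relation $ (A_2 \otimes \id)\beta_{21} = \beta_{21} A_1 $ translates into $ F(A_1) = A_2 $, so that $ \G_1 \cong_q \G_2 $ in the sense of Definition \ref{defqiso}.
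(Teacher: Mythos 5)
Your proposal follows exactly the route the paper intends: the paper gives no written proof of Theorem \ref{starbigaloischar}, stating only that one argues ``in exactly the same way as in \cite{BCEHPSWbigalois},'' and your sketch is a faithful expansion of that argument, correctly identifying the one genuinely new ingredient (compatibility with the quantum adjacency matrices, handled as an extra intertwiner relation) and the one delicate step (that a nonzero linking algebra is automatically a bi-Galois object, whence ergodicity, full quantum multiplicity, and the faithful invariant state). The proposal is correct and takes essentially the same approach as the paper.
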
 

If the equivalent conditions in Theorem \ref{starbigaloischar} are satisfied then $ \Poly(G^+(\G_2, \G_1)) $ is a 
$ \Poly(G^+(\G_2)) $-$ \Poly(G^+(\G_1)) $ bi-Galois object in a natural way \cite{Shopfgalois}. 
In particular, there exist ergodic left and right actions of $ G^+(\G_2) $ and $ G^+(\G_1) $ on $ \Poly(G^+(\G_2, \G_1)) $, respectively. 
Moreover, $ \Poly(G^+(\G_2, \G_1)) $ is equipped with a unique faithful state which is invariant with respect to both actions. 

For $ \G_1 = \G_2 $ and the identity monoidal equivalence, the $ \ast $-algebra $ \Poly(G^+(\G_2, \G_1)) $ equals $ \Poly(G^+(\G_1)) = \Poly(G^+(\G_2)) $, 
both actions are implemented by the comultiplication, and the invariant faithful state is nothing but the Haar state. 

\begin{remark} 
The abelianization of $ \Poly(G^+(\G_2, \G_1)) $ is the algebra of coordinate functions on the space of ``classical isomorphisms'' 
between the quantum graphs $ \G_1 $ and $ \G_2 $, that is, the space of unital $ \ast $-isomorphisms $ \varphi: B_1 \to B_2 $ satisfying 
$$
\psi_2 \circ \varphi = \psi_1, \qquad A_2 \circ \varphi = \varphi \circ A_1.
$$ 
If moreover each $ \G_i $ is associated with a classical directed graph $ E_i = (E_i^0, E_i^1) $ as in paragraph \ref{parclassical},  then by Gelfand duality 
such a map $ \varphi $ corresponds precisely to a graph isomorphism $ \varphi_*: E_2 \to E_1 $ via $ \varphi(f) = f \circ \varphi_* $ for $ f \in C(E_1^0) $. 
This is the reason for the ordering of the quantum graphs in our notation $ \Poly(G^+(\G_2, \G_1)) $.
\end{remark} 

\begin{remark}
There is a canonical algebra isomorphism $ S: \Poly(G^+(\G_2, \G_1)) \to \Poly(G^+(\G_1, \G_2))^{op} $, which can be viewed as a generalization of the 
antipode of the Hopf $ \ast $-algebra associated to a compact quantum group. 
More precisely, if $ (e_m) $ and $ (f_n) $ are orthonormal bases for $ B_1 $ and $ B_2 $, respectively, and we 
write $ \beta_{21}(e_m) = \sum_n f_n \otimes u_{nm} $, then $ u = (u_{ij}) \in \End(B_1, B_2) \otimes \Poly(G^+(\G_2, \G_1)) $ is a unitary matrix, 
and there is an algebra isomorphism $ S: \Poly(G^+(\G_2, \G_1)) \to \Poly(G^+(\G_1, \G_2))^{op} $ given by  
$$
(\id \otimes S)(u) = u^* = u^{-1}, \qquad (\id \otimes S)(u^*) = (J_2^t \bar{u} (J_1^{-1})^t),
$$
where $ (\bar{u})_{kl} = (u_{kl}^*), J_i: B_i \to B_i $ is the anti-linear involution map given by $ J_i(b) = b^* $ and $ t $ denotes
the transpose map. We refer to \cite{BCEHPSWbigalois} for more details. 
\end{remark} 

\begin{remark} \label{wealth}
We have a wealth of examples quantum isomorphisms between the complete quantum graphs $ K(B,\psi) $ introduced in paragraph \ref{parQKn}, and also between 
the trivial quantum graphs $ T(B,\psi) $ introduced in paragraph \ref{parQTn}. Recall that $ K(B, \psi) $ (resp. $ T(B,\psi) $) is defined by equipping the finite 
quantum space $ (B, \psi) $ with the quantum adjacency matrix $ A: L^2(B) \rightarrow L^2(B) $ given by $ A(b) = \delta^2 \psi(b) 1 $ (resp. $ A = \id $). 
More precisely, if $ (B_i, \psi_i) $ are finite quantum spaces for $ i = 1,2 $, with $ \delta_i $-forms $ \psi_i $, then 
$$
K(B_1, \psi_1) \cong_q K(B_2,\psi_2) \iff T(B_1, \psi_1) \cong_q T(B_2, \psi_2) \iff \delta_1 = \delta_2 .
$$ 
These equivalences follow from work of DeRijdt and Vander Vennet in \cite{dRV}, where unitary fiber functors on quantum automophism groups of finite quantum 
spaces equipped with $ \delta $-forms were classified. 
\end{remark} 

Let $ \G_i = (B_i, \psi_i, A_i) $ be directed quantum graphs in standard form, in the sense explained in paragraph \ref{parqgraph}. Explicitly, we fix 
multimatrix decompositions 
$$
B_i = \bigoplus_{a = 1}^{d_i} M_{N^i_a}(\mathbb{C})
$$ 
and diagonal positive invertible matrices $ Q^i_{(a)} $ implementing $ \psi_i $. Let us express the quantum adjacency matrices relative to 
the standard matrix units $ e_{kl}^{(a)} \in M_{N^i_a}(\mathbb{C}) $, so that 
$$
A_i(e^{(a)}_{kl}) = \sum_{brs} (A_i)_{kla}^{rsb} e^{(b)}_{rs}. 
$$
We then obtain the following result, compare \cite{Mrozinskiso3deformations} for the case $ \G_1 = \G_2 $. 

\begin{prop} \label{qisorelations}
Let $ \G_1 $ and $ \G_2 $ be directed quantum graphs given as above. Then $ \Poly(G^+(\G_2, \G_1)) $ is the universal unital $ \ast $-algebra with 
generators $ v_{ija}^{klb} $ for $ 1 \leq i,j \leq N^1_a $, $ 1 \leq k,l \leq N^2_b $, $ 1 \leq a \leq d_1 $, $ 1 \leq b \leq d_2 $, satisfying the 
relations
\begin{itemize}
\item[(A1a)] $ \sum_w v_{kla}^{xwc} v_{rsb}^{wyc} = \delta_{ab} \delta_{lr} v_{ksa}^{xyc} $
\item[(A1b)] $ \sum_l (Q^1_{(a)})^{-1}_{ll} v_{mla}^{xwb} v_{lka}^{zyc} = \delta_{bc} \delta_{wz} (Q^2_{(c)})^{-1}_{zz} v_{mka}^{xyc} $ 
\item[(A2)] $ (v_{kla}^{xyb})^* = v_{lka}^{yxb} $
\item[(A3a)] $ \sum_{xb} (Q^2_{(b)})_{xx} v^{xxb}_{kla} = \delta_{kl} (Q^1_{(a)})_{kk} $
\item[(A3b)] $ \sum_{ka} v^{xyb}_{kka} = \delta_{xy} $
\item[(A4)] $ \sum_{rsb} (A_2)^{xyc}_{rsb} v^{rsb}_{kla} = \sum_{rsb} (A_1)_{kla}^{rsb} v_{rsb}^{xyc} $
\end{itemize}
for all admissible indices. 
\end{prop}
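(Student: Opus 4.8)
The plan is to follow the template of \cite{BCEHPSWbigalois}, and of the case $ \G_1 = \G_2 $ treated in \cite{Mrozinskiso3deformations}, by translating the abstract defining properties of the linking algebra into the explicit relations (A1a)--(A4). Concretely, I would let $ \A $ denote the universal unital $ \ast $-algebra generated by symbols $ v_{ija}^{klb} $ subject to (A1a)--(A4), and define a linear map $ \beta_{21}: B_1 \to B_2 \otimes \A $ by $ \beta_{21}(e^{(a)}_{ij}) = \sum_{bkl} e^{(b)}_{kl} \otimes v_{ija}^{klb} $. The whole proof then reduces to showing that (A1a)--(A4) hold in $ \A $ if and only if $ \beta_{21} $ is a unital $ \ast $-homomorphism which preserves the states, is compatible with the quantum adjacency matrices, and is compatible with the adjoints $ m_i^* $ of the multiplication maps. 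The identification $ \A \cong \Poly(G^+(\G_2,\G_1)) $ then follows from the universal property defining the bi-Galois object.

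First I would match each relation to a structural condition by expanding both sides on the standard matrix units $ e^{(a)}_{ij} $ and comparing coefficients. Unpacking $ \beta_{21}(e^{(a)}_{ij})^* = \beta_{21}((e^{(a)}_{ij})^*) $ yields (A2); multiplicativity $ \beta_{21}(e^{(a)}_{kl} e^{(a)}_{rs}) = \beta_{21}(e^{(a)}_{kl}) \beta_{21}(e^{(a)}_{rs}) $, using $ e^{(b)}_{xw} e^{(c)}_{w'y} = \delta_{bc} \delta_{ww'} e^{(b)}_{xy} $, yields (A1a); unitality $ \beta_{21}(1) = 1 \otimes 1 $ with $ 1 = \sum_{ak} e^{(a)}_{kk} $ yields (A3b); and state preservation $ (\psi_2 \otimes \id) \beta_{21} = \psi_1(\cdot) 1 $, using $ \psi_i(e^{(a)}_{kl}) = \delta_{kl} (Q^i_{(a)})_{kk} $, yields (A3a). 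Finally, applying $ A_2 \otimes \id $ to $ \beta_{21} $ and $ \beta_{21} $ to $ A_1 $, with $ A_i(e^{(a)}_{kl}) = \sum_{brs} (A_i)^{rsb}_{kla} e^{(b)}_{rs} $, produces (A4) upon comparing coefficients.

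The one relation demanding extra care is (A1b). Using Lemma \ref{mstarcomputation} together with the passage from the adapted matrix units $ f^{(a)}_{ij} $ to the standard matrix units $ e^{(a)}_{ij} $, I would first record the formula $ m_i^*(e^{(a)}_{ij}) = \sum_k (Q^i_{(a)})^{-1}_{kk} e^{(a)}_{ik} \otimes e^{(a)}_{kj} $. Comparing $ (m_2^* \otimes \id) \beta_{21} $ with $ \beta_{21}^{(2)} \circ m_1^* $, where $ \beta_{21}^{(2)} $ denotes the induced coaction on $ B_1 \otimes B_1 $, and matching coefficients of $ e^{(b)}_{pq} \otimes e^{(c)}_{rs} $, then gives precisely (A1b). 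The conceptual heart of the proof, and its main obstacle, is to recognize that (A1b) must appear in the presentation at all: at the purely $ \ast $-algebraic level it is genuinely independent of (A1a), (A2), (A3a), (A3b), as one already sees in the magic-unitary specialization $ B_i = \mathbb{C}^N $, where (A1a) and (A1b) reduce to the two independent orthogonality conditions on the columns and rows of the magic unitary $ u $. The relation is automatic only in the $ C^\ast $-setting, where a sum of projections equal to $ 1 $ forces mutual orthogonality. The correct justification in the $ \ast $-algebraic setting is that $ \Poly(G^+(\G_2,\G_1)) $ is a bi-Galois object arising from a unitary tensor functor, so the coefficient matrix $ v $ is a unitary corepresentation and hence intertwines $ m_i^* $ as well as $ m_i $; this intertwining is exactly (A1b).

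With this dictionary between (A1a)--(A4) and the structural conditions in hand, I would conclude as follows. The conditions identified above are precisely those characterizing the coefficient map of the bi-Galois object $ \Poly(G^+(\G_2,\G_1)) $, so the map $ \beta_{21}: B_1 \to B_2 \otimes \A $ constructed from the generators of $ \A $ satisfies the universal property and induces a $ \ast $-homomorphism $ \Poly(G^+(\G_2,\G_1)) \to \A $. Conversely, the actual coefficients of $ \Poly(G^+(\G_2,\G_1)) $ satisfy (A1a)--(A4), so by the universal property of $ \A $ there is a $ \ast $-homomorphism $ \A \to \Poly(G^+(\G_2,\G_1)) $. Since both maps fix the generators $ v_{ija}^{klb} $, they are mutually inverse, which proves the claim.
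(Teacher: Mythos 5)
Your proposal is correct and follows essentially the same route as the paper: write the universal coefficient map $\beta_{21}(e^{(a)}_{ij}) = \sum_{bkl} e^{(b)}_{kl} \otimes v^{klb}_{ija}$ and match each of (A1a), (A2), (A3a), (A3b), (A4) to multiplicativity, involutivity, state preservation, unitality and adjacency compatibility by comparing coefficients on the standard matrix units — exactly the dictionary the paper sets up. The one place where you genuinely diverge is (A1b): the paper identifies it with the existence of the generalized antipode, i.e.\ the algebra anti-isomorphism $S: \Poly(G^+(\G_1,\G_2)) \to \Poly(G^+(\G_2,\G_1))$ given by $S(v^{kla}_{rsb}) = (Q^2_{(b)})_{ss}(Q^1_{(a)})^{-1}_{ll} v^{srb}_{lka}$, whereas you identify it with the intertwining relation $(m_2^* \otimes \id)\beta_{21} = (\id \otimes \id \otimes m_{\O})(\id \otimes \sigma \otimes \id)(\beta_{21} \otimes \beta_{21}) m_1^*$. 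Both readings are valid: your coefficient computation for the $m^*$-condition does reproduce (A1b) verbatim, and your justification for why this condition belongs in the presentation (the fiber functor is unitary, so $v$ intertwines $m^*$ as well as $m$) is sound. In fact your version has the practical advantage that it is precisely the form in which (A1b) is consumed later, in the proof of Theorem \ref{qisolift}, while the paper's antipode formulation makes the symmetry $\Poly(G^+(\G_2,\G_1)) \cong \Poly(G^+(\G_1,\G_2))^{op}$ manifest. Your side remark on the independence of (A1a) and (A1b) at the $\ast$-algebraic level, illustrated by the row/column orthogonality conditions in the magic unitary case, is a correct and worthwhile observation that the paper does not make explicit.
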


\begin{proof}
The following argument is analogous to the one for Proposition 2.10 in \cite{Mrozinskiso3deformations}, compare \cite{Wangqsymmetry}. 
Expressing the universal morphism $ \beta_{21}: B_1 \to B_2 \otimes \Poly(G^+(\G_2, \G_1))$ relative to the bases chosen as above, we can write 
$$ 
\beta_{21}(e^{(a)}_{kl}) = \sum_{xyb} e_{xy}^{(b)} \otimes v^{xyb}_{kla}.
$$
Then $ \Poly(G^+(\G_2, \G_1)) $ is generated as a $ \ast $-algebra by the elements $ v^{xyb}_{kla} $. Now the conditions on this implementing 
a bi-Galois object are equivalent to the equations listed above. More precisely, we have 
\begin{bnum}
\item[$\bullet$] \emph{$ (A1a) \iff \beta_{21} $ is an algebra homomorphism}. 
This follows from 
$$
\beta_{21}(e^{(a)}_{kl}) \beta_{21}(e^{(b)}_{rs}) = \sum_{xwc myd} e_{xw}^{(c)} e_{my}^{(d)} \otimes v^{xwc}_{kla} v^{myd}_{rsb} 
= \sum_{xwc n} e_{xy}^{(c)} \otimes v^{xwc}_{kla} v^{wyc}_{rsb} 
$$
and 
$$
\beta_{21}(e^{(a)}_{kl} e^{(b)}_{rs}) = \delta_{ab} \delta_{lr} \beta_{ji}(e^{(a)}_{ks}) = \sum_{xyc} \delta_{ab} \delta_{lr} e_{xy}^{(c)} \otimes v^{xyc}_{ksa}.
$$
\item[$\bullet$] \emph{$(A1b) \iff S: \Poly(G^+(\G_1, \G_2)) \to \Poly(G^+(\G_2, \G_1)) $ given by 
$$ 
S(v^{kla}_{rsb}) = (Q^2_{(b)})_{ss} (Q^1_{(a)})^{-1}_{ll} v_{lka}^{srb} 
$$ 
defines an algebra anti-isomorphism}. Indeed, we have 
$$
\sum_l S(v^{lma}_{wxb}) S(v^{kla}_{yzc}) = \sum_l (Q^2_{(b)})_{xx} (Q^1_{(a)})^{-1}_{mm} v_{mla}^{xwb} (Q^2_{(c)})_{zz} (Q^1_{(a)})^{-1}_{ll} v_{lka}^{zyc} 
$$
and 
$$
\delta_{bc} \delta_{wz} S(v^{kma}_{yxc}) = \delta_{bc} \delta_{wz} (Q^2_{(c)})_{xx} (Q^1_{(a)})^{-1}_{mm} v_{mka}^{xyc},  
$$
so this statement follows in combination with $ (A1a) $. 
\item[$\bullet$] \emph{$(A2) \iff \beta_{21} $ is involutive}. This follows immediately from $ (e^{(a)}_{kl})^* = e^{(a)}_{lk} $.
\item[$\bullet$] \emph{$(A3a) \iff (\psi_2 \otimes \id) \circ \beta_{21}(b) = \psi_1(b)1 $ for all $ b \in B_1 $}.
This follows from 
$$
(\psi_2 \otimes \id) \circ \beta_{21}(e^{(a)}_{kl}) = \sum_{xyb} \psi_2(e_{xy}^{(b)}) v^{xyb}_{kla} = \sum_{xb} (Q_{(b)}^2)_{xx} v^{xxb}_{kla}
$$
and 
$$
\psi_1(e^{(a)}_{kl})1 = (Q_{(a)}^1)_{kk} \delta_{kl}. 
$$
\item[$\bullet$] \emph{$(A3b) \iff \beta_{21} $ is unital}. This follows from 
$$
\beta_{21}(1) = \sum_{ak} \beta_{21}(e^{(a)}_{kk}) = \sum_{xybak} e_{xy}^{(b)} \otimes v^{xyb}_{kka}. 
$$
\item[$\bullet$] \emph{$(A4) \iff \beta_{21} \circ A_1 = (A_2 \otimes \id) \circ \beta_{21} $}. This follows from 
$$
(\beta_{21} \circ A_1)(e^{(a)}_{kl}) = \sum_{rsb} (A_1)_{kla}^{rsb} \beta_{21}(e^{(b)}_{rs}) = \sum_{rsbxyc} (A_1)_{kla}^{rsb} e^{(c)}_{xy} \otimes v_{rsb}^{xyc}
$$
and 
$$
(A_2 \otimes \id) \circ \beta_{21}(e^{(a)}_{kl}) = \sum_{rsb} A_2(e_{rs}^{(b)}) \otimes v^{rsb}_{kla} 
= \sum_{rsbxyc} (A_2)_{rsb}^{xyc} e^{(c)}_{xy} \otimes v^{rsb}_{kla}. 
$$
\end{bnum} 
Combining these observations yields the claim. 
\end{proof}

\subsection{Quantum symmetries of quantum Cuntz-Krieger algebras} 
 
We shall now show that quantum automorphisms and quantum isomorphisms of directed quantum graphs lift naturally to the level of 
their associated $ C^\ast $-algebras. 

Firstly, we have the following lifting result for quantum symmetries, compare the work in \cite{SchmidtWeberqsym} on classical graph $ C^\ast $-algebras.  

\begin{theorem} \label{qutprop} 
Let $ \G = (B, \psi, A) $ be a directed quantum graph. Then the canonical action $ \beta: B \rightarrow B \otimes C(G^+(\G)) $ 
of the quantum automorphism group of $ \G $ induces an action $ \hat{\beta}: \FO(\G) \rightarrow \FO(\G) \otimes C(G^+(\G)) $ such that
$$
\hat{\beta}(S(b)) = (S \otimes \id)\beta(b) 
$$
for all $ b \in B $. 
\end{theorem}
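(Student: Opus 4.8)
The plan is to verify that the composite map $\hat\beta_0 := (S \otimes \id)\beta : B \to \FO(\G) \otimes C(G^+(\G))$ is a quantum Cuntz-Krieger $\G$-family in the target algebra $\FO(\G) \otimes C(G^+(\G))$, where $\G$ is regarded as a quantum graph over this new base via the obvious extension. Once this is checked, the universal property of $\FO(\G)$ (Definition \ref{defqck}) produces a unique $\ast$-homomorphism $\hat\beta : \FO(\G) \to \FO(\G) \otimes C(G^+(\G))$ with $\hat\beta(S(b)) = (S \otimes \id)\beta(b)$, which is our candidate action. First I would set up the right framework: for a $C^\ast$-algebra $C$ and a linear map $t : B \to D \otimes C$ whose ``components'' behave like partial isometries, conditions (a) and (b) in the definition of a quantum Cuntz-Krieger family make sense verbatim, using the multiplication of $D \otimes C$ and the adjoint $m^*$ of the multiplication on $B$. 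The key point is that $\beta : B \to B \otimes C(G^+(\G))$ is a unital $\ast$-homomorphism that preserves $\psi$ and satisfies $\beta A = (A \otimes \id)\beta$; these are precisely the intertwining properties needed to transport relations (a) and (b) through $\beta$.

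The main verification runs as follows. Because $\beta$ is a $\ast$-homomorphism preserving $\psi$, it intertwines both $m$ and $m^*$ in the sense that $\beta \circ m = (m \otimes \mu_C)(\id_B \otimes \Sigma \otimes \id_C)(\beta \otimes \beta)$ (where $\mu_C$ is multiplication on $C$ and $\Sigma$ the flip) and, dually, $(m^* \otimes \id_C)\beta = (\id \otimes \Sigma \otimes \id)(\beta \otimes \beta) m^*$ — the latter being the statement that $m^*$ is an intertwiner for the comodule structure, which holds because $\psi$ is preserved. Feeding these compatibilities into relation (a), the left side
\[
\mu_{D\otimes C}(\id \otimes \mu_{D\otimes C})(\hat\beta_0 \otimes \hat\beta_0^* \otimes \hat\beta_0)(\id \otimes m^*)m^*
\]
collapses: the $S$-factors reproduce relation (a) for $S$ itself, while the $C(G^+(\G))$-factors multiply up to recombine via $\beta$ being an algebra homomorphism, leaving exactly $(S \otimes \id)\beta = \hat\beta_0$. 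For relation (b) one uses $\beta A = (A \otimes \id)\beta$ to move the $A$ across, reducing to relation (b) for $S$ itself. I would most likely carry out these two checks in terms of the concrete generators $S^{(a)}_{ij}$ of Proposition \ref{qckconcrete}, since the fundamental matrix $v$ of $G^+(\G)$ then makes the bookkeeping explicit: relations (A1a), (A2) for $v$ handle the $\ast$-homomorphism/involution compatibility, and (A4) handles compatibility with $A$.

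After obtaining the $\ast$-homomorphism $\hat\beta$, it remains to confirm it is genuinely an action of the compact quantum group $G^+(\G)$, i.e. to verify the coaction identity $(\hat\beta \otimes \id)\hat\beta = (\id \otimes \Delta)\hat\beta$ and the density (Podleś) condition $[(1 \otimes C(G^+(\G)))\hat\beta(\FO(\G))] = \FO(\G) \otimes C(G^+(\G))$. Both reduce to statements on generators: the coaction identity follows because $\beta$ satisfies the corresponding identity $(\beta \otimes \id)\beta = (\id \otimes \Delta)\beta$ as an action on $B$, and since the generators $S(b)$ determine $\hat\beta$ uniquely, coassociativity propagates from $\beta$ to $\hat\beta$ by uniqueness in the universal property. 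The density condition follows from the density condition for $\beta$ together with the fact that the $S(b)$ generate $\FO(\G)$ and have uniformly bounded, in fact norm-$\le 1$, matrix entries (as recorded after Proposition \ref{qckconcrete}).

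I expect the main obstacle to be the bookkeeping in relation (a): it is a triple-product relation threaded through two applications of $m^*$, so carefully tracking how the three $C(G^+(\G))$-valued legs multiply together and reassemble into a single application of $\beta$ — using only that $\beta$ is a unital $\ast$-homomorphism preserving $\psi$ — requires some care. Working with the explicit generator relations of Proposition \ref{qckconcrete} and the fundamental-matrix relations (A1a), (A1b), (A2), (A4) of Proposition \ref{qisorelations} (specialized to $\G_1 = \G_2 = \G$) should make this manageable, as each algebraic manipulation on the $S$-side is mirrored by a defining relation on the $v$-side. The verification of the coaction axioms and density, by contrast, is essentially formal once relations (a) and (b) are secured.
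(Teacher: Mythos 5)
Your proposal is correct and follows the same overall strategy as the paper: show that $ \hat{\beta}_0 = (S \otimes \id)\beta $ is a quantum Cuntz-Krieger $ \G $-family in $ \FO(\G) \otimes C(G^+(\G)) $, with the crux being the compatibility of $ m^* $ with $ \beta $, and then invoke the universal property of $ \FO(\G) $. The one genuine difference lies in how that crux is established. The paper does not prove Theorem \ref{qutprop} directly; it derives it as the case $ \G_1 = \G_2 $ of Theorem \ref{qisolift}, which is formulated for an arbitrary linking algebra $ \Poly(G^+(\G_j,\G_i)) $, and there the identity $ (m_j^* \otimes \id)\beta_{ji} = (\id \otimes \id \otimes m_\O)(\id \otimes \sigma \otimes \id)(\beta_{ji} \otimes \beta_{ji}) m_i^* $ is verified by a direct computation with the generator relation (A1b) of Proposition \ref{qisorelations}, since in the bi-Galois setting one cannot simply appeal to unitary representation theory. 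You instead observe that, for the quantum automorphism group itself, $ L^2(B,\psi) $ carries a unitary representation because $ \beta $ preserves $ \psi $, so $ m $ being an intertwiner (as $ \beta $ is an algebra map) forces $ m^* $ to be one as well; this is a cleaner route for the special case at hand, though it does not directly yield the paper's more general statement. Two small points: your displayed ``dual'' formula $ (m^* \otimes \id)\beta = (\id \otimes \Sigma \otimes \id)(\beta \otimes \beta)m^* $ omits the multiplication of the two $ C(G^+(\G)) $-legs needed to make the types match (your prose makes clear you intend it), and your explicit verification of the coaction identity and the Podle\'s density condition is actually a welcome addition --- the paper's Theorem \ref{qisolift} only produces the $ \ast $-homomorphism and leaves the action axioms implicit.
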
 

The proof of Theorem \ref{qutprop} will be obtained as a special case of the more general Theorem \ref{qisolift} on quantum isomorphisms below. 
Nonetheless, for the sake of clarity we have decided to state this important special case separately. 

\begin{remark} 
There are typically plenty of quantum automorphisms of $ \FO(\G) $, and in fact, even $ * $-automorphisms, which do not arise from quantum automorphisms 
as in Theorem \ref{qutprop}. For instance, the gauge action on the free Cuntz-Krieger algebra associated with a classical directed graph cannot be described 
this way, compare paragraph \ref{pargauge}.   
\end{remark} 

Now assume that $ \G_1, \G_2 $ are quantum isomorphic directed quantum graphs in standard form, with corresponding linking algebras $ \Poly(G^+(\G_j, \G_i)) $.  
The associated $ \ast $-homomorphisms $ \beta_{ji}: B_i \to B_j \otimes \Poly(G^+(\G_j,\G_i)) $ for $ 1 \leq i, j \leq 2 $ are given by 
$$
\beta_{ji}(e^{(a)}_{kl}) = \sum_{xyb} e_{xy}^{(b)} \otimes v^{xyb}_{kla}
$$ 
in terms of the standard matrix units. Here $ v^{xyb}_{kla} $ are the generators of $ \Poly(G^+(\G_j,\G_i)) $ as in Proposition \ref{qisorelations}. 

\begin{theorem} \label{qisolift}
Let $ \G_i = (B_i, \psi_i, A_i) $ for $ i = 1,2 $ be directed quantum graphs and assume that $ \G_1 \cong_q \G_2 $. Then there exists $ \ast $-homomorphisms 
$$
\hat{\beta}_{ji}: \FO(\G_i) \to \FO(\G_j) \otimes C(G^+(\G_j, \G_i))
$$
for $ 1 \leq i,j \leq 2 $ such that 
$$ 
\hat{\beta}_{ji}(S_i(b)) = (S_j \otimes \id) \beta_{ji}(b) 
$$ 
for all $ b \in B_i $.
\end{theorem}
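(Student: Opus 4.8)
The plan is to invoke the universal property of $\FO(\G_i)$ from Definition \ref{defqck}. For fixed $i,j$ I set $D = \FO(\G_j)\otimes C(G^+(\G_j,\G_i))$ and define a linear map $s_{ji}\colon B_i \to D$ by $s_{ji} = (S_j\otimes\id)\beta_{ji}$, where $S_j\colon B_j \to \FO(\G_j)$ is the canonical quantum Cuntz-Krieger $\G_j$-family. A short computation using that $\beta_{ji}$ is a $\ast$-homomorphism gives $s_{ji}^* = (S_j^*\otimes\id)\beta_{ji}$. It then suffices to verify that $s_{ji}$ is a quantum Cuntz-Krieger $\G_i$-family, that is, that it satisfies conditions a) and b) preceding Definition \ref{defqck}; the asserted $\ast$-homomorphism $\hat\beta_{ji}$ together with the formula $\hat\beta_{ji}(S_i(b)) = (S_j\otimes\id)\beta_{ji}(b)$ is then exactly what the universal property produces.

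The key structural input is that $\beta_{ji}$ intertwines every piece of data entering these relations. By Proposition \ref{qisorelations} the map $\beta_{ji}$ is a unital, involutive algebra homomorphism (relations (A1a), (A2), (A3b)), preserves the states (relation (A3a)), and satisfies $(A_j\otimes\id)\beta_{ji} = \beta_{ji}A_i$ (relation (A4)). I also need that $\beta_{ji}$ intertwines the adjoint multiplications, namely
\[
(m_j^*\otimes\id)\beta_{ji} = (\id\otimes\id\otimes\mu_P)(\id\otimes\sigma\otimes\id)(\beta_{ji}\otimes\beta_{ji})m_i^*,
\]
where $\mu_P$ is the multiplication of $C(G^+(\G_j,\G_i))$ and $\sigma$ the tensor flip. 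Conceptually this holds because $m_i^*$ is the adjoint in the $C^\ast$-tensor category of the intertwiner $m_i$, and a monoidal equivalence is implemented by a unitary tensor functor, so it respects adjoints; concretely it can be checked from relations (A1a) and (A1b), the latter being precisely the relation involving the weight matrices $Q$ that makes the fundamental corepresentation unitary.

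With these intertwining properties in hand, I would verify b) as follows. Expanding $s_{ji}$ and $s_{ji}^*$ and using the $m^*$-intertwining to absorb the copy of $m_i^*$ and combine the $C(G^+(\G_j,\G_i))$-legs via multiplicativity, the left-hand side of b) becomes $(\mu_{\FO(\G_j)}(S_j^*\otimes S_j)m_j^*\otimes\id)\beta_{ji}$. Relation b) for the family $S_j$ rewrites this as $(\mu_{\FO(\G_j)}(S_j\otimes S_j^*)m_j^* A_j\otimes\id)\beta_{ji}$, and finally the compatibility $(A_j\otimes\id)\beta_{ji} = \beta_{ji}A_i$ together with the same intertwining run backwards identifies this with the right-hand side $\mu_D(s_{ji}\otimes s_{ji}^*)m_i^* A_i$. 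Condition a) is handled in exactly the same spirit, now using that $\beta_{ji}$ intertwines the iterated map $(\id\otimes m_i^*)m_i^*$ with $(\id\otimes m_j^*)m_j^*$ (which follows from multiplicativity together with the $m^*$-intertwining), reducing a) for $s_{ji}$ to relation a) for $S_j$, which holds.

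The main obstacle is the $m^*$-intertwining relation: multiplicativity, involutivity, state preservation and $A$-compatibility of $\beta_{ji}$ are immediate from Proposition \ref{qisorelations}, but the relation for the adjoint multiplication is the one genuinely encoding unitarity of the linking corepresentation, and it is the place where relation (A1b) and the weight matrices $Q$ must be used. Apart from this, the computations are routine once one keeps careful track of the tensor-leg bookkeeping when distributing $\mu_D$ over the two tensor factors of $D$. The case $i = j$ with the identity monoidal equivalence specializes to Theorem \ref{qutprop}, so the same argument proves that statement as well.
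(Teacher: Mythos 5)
Your proposal is correct and follows essentially the same route as the paper: define $s_{ji}=(S_j\otimes\id)\beta_{ji}$, establish the $m^*$-intertwining identity $(m_j^*\otimes\id)\beta_{ji}=(\id\otimes\id\otimes\mu_P)(\id\otimes\sigma\otimes\id)(\beta_{ji}\otimes\beta_{ji})m_i^*$ via relation (A1b) and the weighted formula for $m_i^*$, then reduce conditions a) and b) for $s_{ji}$ to those for $S_j$ together with $(A_j\otimes\id)\beta_{ji}=\beta_{ji}A_i$, and invoke the universal property. You have correctly identified the one non-routine ingredient (the adjoint-multiplication intertwining, which the paper also singles out and proves first), so there is nothing substantive to add.
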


\begin{proof}
Observe first that for $ i = j $ we are precisely in the situation of Theorem \ref{qutprop}, so that Theorem \ref{qutprop} is indeed 
a special case of the claim at hand.  

Let us write $ m_\O: \O \otimes \O \rightarrow \O $ for the multiplication in $ \O = \Poly(G^+(\G_j, \G_i)) $. We claim that
$$ 
(m_j^* \otimes \id)\beta_{ji} = (\id \otimes \id \otimes m_\O)(\id \otimes \sigma \otimes \id)(\beta_{ji} \otimes \beta_{ji}) m_i^*,
$$ 
where $ m_i: B_i \rightarrow B_i \rightarrow B_i $ denotes multiplication in $ B_i $ and $ \sigma $ is the flip map. 
Indeed, rewriting Lemma \ref{mstarcomputation} in terms of the standard matrix units yields
$$
m_i^*(e^{(a)}_{kl}) = \sum_r (Q^i_{(a)})^{-1}_{rr}\, e^{(a)}_{kr} \otimes e^{(a)}_{rl},
$$ 
and using relation $ (A1b) $ from Proposition \ref{qisorelations} we get 
\begin{align*}
(m_j^* \otimes \id)\beta_{ji}(e^{(a)}_{kl}) &= \sum_{xnb} m_j^*(e^{(b)}_{xn}) \otimes v^{xnb}_{kla} \\
&= \sum_{xybn} (Q_{(b)}^j)^{-1}_{yy} e^{(b)}_{xy} \otimes e^{(b)}_{yn} \otimes v^{xnb}_{kla} \\
&= \sum_{xybmnc} (Q_{(b)}^j)^{-1}_{yy} \delta_{bc} \delta_{ym} e^{(b)}_{xy} \otimes e^{(c)}_{mn} \otimes v^{xnb}_{kla} \\
&= \sum_{wxybmnc} (Q_{(a)}^i)^{-1}_{ww} e^{(b)}_{xy} \otimes e^{(c)}_{mn} \otimes v^{xyb}_{kwa} v^{mnc}_{wla} \\
&= \sum_w (Q_{(a)}^i)^{-1}_{ww} (\id \otimes \id \otimes m_\O)(\id \otimes \sigma \otimes \id)(\beta_{ji} \otimes \beta_{ji}) (e^{(a)}_{kw} \otimes e^{(a)}_{wl}) \\
&= (\id \otimes \id \otimes m_\O)(\id \otimes \sigma \otimes \id)(\beta_{ji} \otimes \beta_{ji}) m_i^*(e^{(a)}_{kl})  
\end{align*}
as required. 

Now consider the linear map $ s: B_i \rightarrow \FO(\G_j) \otimes C(G^+(\G_j, \G_i)) = D $ given by $ s = (S_j \otimes \id) \beta_{ji} $. Then 
$$ 
s^*(b) = s(b^*)^* = (S_j \otimes \id)\beta_{ji}(b^*)^* = (S_j^* \otimes \id)\beta_{ji}(b), 
$$
and we claim that $ s $ is a quantum Cuntz-Krieger $ \G_i $-family in $ D $. Writing $ \mu $ for the multiplication in $ \FO(\G_j) $ 
and $ \mu_D $ for the one in $ D $, our above considerations yield
\begin{align*}
&\mu_D (\id \otimes \mu_D)(s \otimes s^* \otimes s)(\id \otimes m_i^*)m_i^* \\
&= \mu_D (\id \otimes \mu_D)(S_j \otimes \id \otimes S_j^* \otimes \id \otimes S_j \otimes \id)(\beta_{ji} \otimes \beta_{ji} \otimes \beta_{ji})
(\id \otimes m_i^*)m_i^* \\
&= \mu_D (\id \otimes \id \otimes \mu \otimes \id)(S_j \otimes \id \otimes S_j^* \otimes S_j \otimes m_\O) \sigma_{45} 
(\beta_{ji} \otimes \beta_{ji} \otimes \beta_{ji}) (\id \otimes m_i^*)m_i^* \\
&= \mu_D(\id \otimes \id \otimes \mu \otimes \id)(S_j \otimes \id \otimes S_j^* \otimes S_j \otimes \id)
(\id \otimes \id \otimes m_j^* \otimes \id)(\beta_{ji} \otimes \beta_{ji})m_i^* \\
&= (\mu \otimes \id)(\id \otimes \mu \otimes \id)(S_j \otimes S_j^* \otimes S_j \otimes m_\O)
(\id \otimes m_j^* \otimes \id)(\id \otimes \sigma \otimes \id)(\beta_{ji} \otimes \beta_{ji})m_i^* \\
&= (\mu \otimes \id)(\id \otimes \mu \otimes \id)(S_j \otimes S_j^* \otimes S_j \otimes \id)(\id \otimes m_j^* \otimes \id) (m_j^* \otimes \id)\beta_{ji} \\
&= (S_j \otimes \id)\beta_{ji} = s,  
\end{align*}
and similarly
\begin{align*}
\mu_D (s^* \otimes s) m_i^* 
&= (\mu \otimes m_\O) \sigma_{23} (S_j^* \otimes \id \otimes S_j \otimes \id)(\beta_{ji} \otimes \beta_{ji})m_i^* \\ 
&= (\mu \otimes \id)(S_j^* \otimes S_j \otimes m_\O) \sigma_{23} (\beta_{ji} \otimes \beta_{ji})m_i^* \\ 
&= (\mu \otimes \id)(S_j^* \otimes S_j \otimes \id)(m_j^* \otimes \id)\beta_{ji} \\ 
&= (\mu \otimes \id)(S_j \otimes S_j^* \otimes \id)(m_j^* \otimes \id)(A_j \otimes \id)\beta_{ji} \\ 
&= (\mu \otimes \id)(S_j \otimes S_j^* \otimes \id)(m_j^* \otimes \id)\beta_{ji} A_i \\ 
&= \mu_D (s \otimes s^*) m_i^* A_i, 
\end{align*}
using the quantum Cuntz-Krieger relation for $ S_j $. Hence the universal property of $ \FO(\G_i) $ yields the claim. 
\end{proof}
 
\begin{remark} \label{remhatinjective} 
If we denote by $ \C_i \subset \FO(\G_i) $ the dense $ \ast $-subalgebra generated by $ S_i(B_i) $, then the restriction of the map $ \hat{\beta}_{ji} $ 
in Theorem \ref{qisolift} to $ \C_i $ is injective. Indeed, there exists a canonical unital $ \ast $-isomorphism 
\begin{align*}
\theta_i^j: \Poly(G^+(\G_i)) &\to \Poly(G^+(\G_i, \G_j)) \Box_{\Poly(\G_j)} \Poly(G^+(\G_j, \G_i)),  
\end{align*}
where 
\begin{align*}
\Poly(G^+(\G_i, \G_j)) \Box_{\Poly(\G_j)} \Poly(G^+(\G_j, \G_i)) &= \{x \mid (\rho_j \otimes \id)(x) = (\id \otimes \lambda_j)(x) \} \\
&\subset \Poly(G^+(\G_i, \G_j)) \otimes \Poly(G^+(\G_j, \G_i)), 
\end{align*}
and 
\begin{align*}
\rho_j&: \Poly(G^+(\G_i, \G_j)) \rightarrow \Poly(G^+(\G_i, \G_j)) \otimes \Poly(G^+(\G_j)) \\
\lambda_j&: \Poly(G^+(\G_j, \G_i)) \rightarrow \Poly(G^+(\G_j)) \otimes \Poly(G^+(\G_j, \G_i)) 
\end{align*}
are the canonical ergodic actions of $ G^+(\G_j) $ on the linking algebras. 
The map $ \theta_i^j $ satisfies 
$$
(\hat{\beta}_{ij} \otimes \id) \hat{\beta}_{ji}(x) = (\id \otimes \theta_i^j) \hat{\beta}_{ii}(x) 
$$
for all $ x \in \C_i $. 
If $ \epsilon_i: \Poly(G^+(\G_i, \G_j)) \Box_{\Poly(\G_j)} \Poly(G^+(\G_j, \G_i)) \cong \Poly(G^+(\G_i)) \to \mathbb{C} $ is the character given by 
the counit of $ \Poly(G^+(\G_i)) $, this implies 
$$
(\id \otimes \epsilon_i)(\hat{\beta}_{ij} \otimes \id) \hat{\beta}_{ji}(x) = x 
$$
for $ x \in \C_i $. Hence the restriction of $ \hat{\beta}_{ji} $ to $ \C_i $ is indeed injective.

However, it is not clear whether the map $ \hat{\beta}_{ji}: \FO(\G_i) \to \FO(\G_j) \otimes C(G^+(\G_j, \G_i)) $ itself is injective. In 
the following section we show that this is at least sometimes the case.
\end{remark}

\section{Unitary error bases and finite dimensional quantum symmetries} \label{secunitaryerror} 

In this section we apply the general results of the previous section to certain pairs of complete quantum graphs and 
trivial quantum graphs, respectively. More precisely, we fix $ N \in \mathbb{N} $ and consider 
\begin{align*}
\G_1^K(N) &= K_{N^2} = K(\mathbb{C}^{N^2}, \tr) \\ 
\G_2^K(N) &= K(M_N(\mathbb{C}), \tr) 
\end{align*}
and
\begin{align*}
\G_1^T(N) &= T_{N^2} = T(\mathbb{C}^{N^2}, \tr) \\ 
\G_2^T(N) &= T(M_N(\mathbb{C}), \tr) = TM_N,  
\end{align*}
compare section \ref{secexamples}. 
The similarity between these pairs stems from the fact that we have canonical identifications 
\begin{align*}
G^+(\G_1^K(N)) &= G^+(\G_1^T(N)) = S_{N^2}^+, \\
G^+(\G_2^K(N)) &= G^+(\G_2^T(N)) = G^+(M_N(\mathbb{C}), \tr), 
\end{align*}
respectively. We will therefore also use the short hand notation $ G^+(\G_1(N)) $ and $ G^+(\G_2(N)) $ for these quantum automorphism groups. 

We recall that $ G^+(\G_1(N)) $ and $ G^+(\G_2(N)) $ are monoidally equivalent, and that we have quantum isomorphisms $ \G_1^K(N) \cong_q \G_2^K(N) $ 
and $ \G_1^T(N) \cong_q \G_2^T(N) $, see the remarks in paragraph \ref{parqsym}. This means in particular that there exists a 
bi-Galois object $ \Poly(G^+(\G_2(N), \G_1(N))) $ linking $ G^+(\G_1(N)) $ and $ G^+(\G_2(N)) $. If $ X $ is a set of cardinality $ N^2 $, then 
this $ \ast $-algebra can be described in terms of generators $ v^{rs}_x $ with $ 1 \leq r,s \leq N $ and $ x \in X $, satisfying the relations as 
in Proposition \ref{qisorelations}.

\subsection{Representations from unitary error bases}  

With some inspiration from quantum information theory, we shall now construct unital $ * $-homomorphisms from the linking algebra $ \Poly(G^+(\G_2(N), \G_1(N))) $ 
to $ M_N(\mathbb{C}) $. The key tool in this construction is the notion of a unitary error basis \cite{Wernerteleportation}. 

\begin{definition}
Let $ N \in \mathbb{N} $ and let $ X $ be a finite set of cardinality $ N^2 $. A {\it unitary error basis} for $ M_N(\mathbb{C}) $ is a 
basis $ \mathcal{W} = \{w_x\}_{x \in X} $ for $ M_N(\mathbb{C}) $ consisting of unitary matrices that are orthonormal with respect to the normalized 
trace inner product, so that 
$$
\tr(w_x^*w_y) = \delta_{xy} 
$$
for all $ x, y \in X $. 
\end{definition}

Unitary error bases play a fundamental role in quantum information theory. In particular, they form a one-to-one correspondence with ``tight'' quantum 
teleportation and superdense coding schemes \cite{Wernerteleportation}. 

\begin{prop} \label{constructpiW}
Let $ N \in \mathbb{N} $ and assume that $ \mathcal{W} = \{w_x\}_{x \in X} $ is a unitary error basis for $ M_N(\mathbb{C}) $. With the notation as above, 
there exists a unital $ \ast $-representation $ \pi_{\mathcal{W}}: \Poly(G^+(\G_2(N),\G_1(N))) \to M_N(\mathbb{C}) $ such that 
$$
\pi_{\mathcal{W}}(v^{rs}_x) = \frac{1}{N} w_x^* e_{rs} w_x
$$
for all $ r,s,x $. 
\end{prop}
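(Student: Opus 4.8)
The plan is to invoke the universal property of the linking algebra presented in Proposition \ref{qisorelations}: writing $V^{rs}_x := \frac{1}{N}\, w_x^* e_{rs} w_x \in M_N(\mathbb{C})$ for the proposed images of the generators, it suffices to check that the family $\{V^{rs}_x\}$ satisfies the defining relations (A1a), (A1b), (A2), (A3a), (A3b) and (A4). Here the two finite quantum spaces are $B_1 = \mathbb{C}^{N^2} = C(X)$ and $B_2 = M_N(\mathbb{C})$, both carrying their normalized traces, so that $\delta_1 = \delta_2 = N$, the weight data are the scalars $Q^1_{(x)} = 1/N^2$ and the matrix $Q^2 = \frac{1}{N}\id$, and each generator $v^{rs}_x$ carries $B_2$-matrix indices $r,s$ together with a $B_1$-block label $x \in X$. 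Since compatibility with the quantum adjacency matrix is automatic for the complete and trivial quantum graphs in question, relation (A4) will impose nothing new, so the essential work is to verify the five quantum-permutation-type relations for the $V^{rs}_x$.

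Before the verification I would record the three properties of a unitary error basis that drive the argument: (i) each $w_x$ is unitary; (ii) orthonormality, in the form $\Tr(w_x^* w_y) = N\delta_{xy}$; and (iii) the completeness (``twirl'') identity $\sum_{x \in X} w_x^* a w_x = N \Tr(a)\,1$, valid for every $a \in M_N(\mathbb{C})$. The point of (iii) is that $\{w_x/\sqrt{N}\}_{x \in X}$ is an orthonormal basis of $M_N(\mathbb{C})$ for the inner product $\bra a,b\ket = \Tr(a^* b)$, and the map $a \mapsto \sum_x w_x^* a w_x$ is independent of the choice of orthonormal basis; evaluating it on the standard matrix-unit basis gives $N\Tr(a)\,1$.

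With these in hand the relations fall out quickly. Relation (A2) is immediate from $e_{rs}^* = e_{sr}$ and unitarity of $w_x$, giving $(V^{rs}_x)^* = V^{sr}_x$. Relation (A1b) reduces, after cancelling $w_x w_x^* = 1$, to the matrix-unit identity $e_{pt} e_{t'q} = \delta_{tt'} e_{pq}$, the $Q$-factors $N^2$ and $N$ accounting exactly for the normalization. For (A1a) I would compute $\sum_w V^{pw}_x V^{wq}_y = \frac{1}{N^2}\,\Tr(w_x w_y^*)\, w_x^* e_{pq} w_y$, the inner sum producing the trace of $w_x w_y^*$; orthonormality (ii) then collapses this to $\delta_{xy} V^{pq}_x$, handling the diagonal and off-diagonal blocks uniformly. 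Relation (A3a) follows from $\sum_r e_{rr} = 1$ together with unitarity, matching the scalar $(Q^1_{(x)})_{11} = 1/N^2$ on the right. Finally (A3b), namely $\sum_{x \in X} V^{pq}_x = \delta_{pq}\,1$, is precisely the completeness identity (iii) applied to $a = e_{pq}$. As for (A4), one checks that both sides evaluate to $\delta_{pq}\,1$ in the complete case (using (iii) once more) and reduce tautologically to $V^{pq}_x$ in the trivial case; alternatively one simply invokes that (A4) is redundant for these graphs.

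The only genuinely nontrivial input is the completeness identity (iii), which is where the hypothesis that $\mathcal{W}$ is a \emph{basis} — rather than merely a collection of $N^2$ unitaries — is essential; indeed it is exactly relation (A3b) that would fail for an arbitrary family of unitaries. Everything else is bookkeeping with matrix units and the normalizations dictated by the traces on $B_1$ and $B_2$. Once all relations are verified, the universal property of $\Poly(G^+(\G_2(N),\G_1(N)))$ produces the desired unital $\ast$-homomorphism $\pi_{\mathcal{W}}$, with $\ast$-compatibility guaranteed by the match between (A2) and $(V^{rs}_x)^* = V^{sr}_x$.
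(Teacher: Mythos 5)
Your proposal is correct and follows essentially the same route as the paper: define $V^{rs}_x = \frac{1}{N} w_x^* e_{rs} w_x$ and verify the relations of Proposition \ref{qisorelations} one by one, using unitarity, orthonormality of the $w_x$ for (A1a), and the depolarizing identity for (A3b) and the complete-graph case of (A4). The only cosmetic difference is that you sketch why the depolarizing property follows from orthonormality, whereas the paper cites it directly from \cite{Wernerteleportation}.
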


\begin{proof}
Recalling that we write $ e_{rs} \in M_N(\mathbb{C}) $ for the standard matrix units, let us define
$$ 
V^{rs}_x = \frac{1}{N} w_x^* e_{rs} w_x 
$$ 
for all $ 1 \leq r,s \leq N $ and $ x \in X $. It suffices to check that the elements $ V^{rs}_x \in M_N(\mathbb{C}) $ satisfy  
the relations in Proposition \ref{qisorelations}. 

In order to do this, we recall from Theorem 1 in \cite{Wernerteleportation} that a unitary error basis 
can be equivalently characterized by the following properties for a family of unitaries $ \mathcal{W} = \{w_x\}_{x \in X} \subset M_N(\mathbb{C}) $:

\begin{bnum}
\item[a)] ({\it Depolarizing property}): $ \sum_{x \in X} w_x^*a w_x = N \Tr(a) 1 $ for $ a \in M_N(\mathbb{C}) $.
\item[b)] ({\it Maximally entangled basis property}): If $ \Omega = \frac{1}{\sqrt{N}}\sum_{i = 1}^N e_{i} \otimes e_{i} \in \mathbb{C}^N \otimes \mathbb{C}^N $ 
is a maximally entangled state and $ \Omega_x = (w_x \otimes 1)\Omega $, then $ \{\Omega_x\}_{x \in X} $ is an orthonormal basis 
for $ \mathbb{C}^N \otimes \mathbb{C}^N $.
\end{bnum} 

Observing that $ Q^1 = N^{-2} \id $ and $ Q_2 = N^{-1} \id $ we therefore we have to verify the following relations: 

\begin{bnum} 
\item[$\bullet$] \emph{$(A1a) \iff \sum_w V_{x}^{rw} V_{y}^{ws} = \delta_{xy} V_{x}^{rs} $}. This follows from 
\begin{align*}
\sum_t V_{x}^{rt} V_{y}^{ts} &= N^{-2} \sum_t w_x^*e_{rt} w_x w_y^*e_{ts}w_y \\
&= N^{-2} \Tr(w_xw_y^*) w_x^*(e_{rs})w_y = \delta_{xy} N^{-1} w_x^*(e_{rs})w_x = \delta_{xy} V_{x}^{rs}.
\end{align*}

\item[$\bullet$] \emph{$ (A1b) \iff V^{ji}_{x} V^{sr}_{x} = \delta_{is} N^{-1} V^{jr}_{x} $}. 
This follows directly from 
$$ 
(w_x^*e_{ji}w_x)(w_x^*e_{sr}w_x) = \delta_{is} w_x^*e_{jr}w_x.
$$

\item[$\bullet$] \emph{$ (A2) \iff (V^{ij}_{x})^* = V^{ji}_{x} $}. This is immediate.

\item[$\bullet$] \emph{$ (A3a) \iff \sum_i N V^{ii}_{x} = 1 $}. This follows from 
$$
\sum_i N V^{ii}_{x} = \sum_i w_x^* e_{ii} w_x = w_x^* w_x = 1.
$$

\item[$\bullet$] \emph{$ (A3b) \iff \sum_{z} V^{ij}_{z} = \delta_{ij}1 $}. This is the depolarizing property of $ \mathcal{W} $. 

\item[$\bullet$] \emph{$ (A4) \iff \sum_{rs} (A_2)_{rs}^{ij} V^{rs}_{x} = \sum_y (A_1)^{y}_{x} V_{y}^{ij} $}. 
For the trivial quantum graphs this is obvious. In the case of complete quantum graphs we have $ (A_1)^x_y = 1, (A_2)^{ij}_{kl} = N \delta_{ij} \delta_{kl} $
for all $ x,y,i,j,k,l $. Combining this with relations (A3a) and (A3b) yields the claim. More precisely, using (A3a) we obtain
\begin{align*}
\text{(A4)} &\iff \delta_{ij}\sum_s N V^{ss}_x = \sum_{y} V_{y}^{ij} \\
&\iff \delta_{ij}1 = \sum_y V^{ij}_y \\
&\iff \text{(A3b)}
\end{align*}
as required.
\end{bnum} 
This completes the proof. 
\end{proof}

\begin{remark} \label{pauli}
It is easy to construct examples of unitary error bases. 
Let $ X, Z \in M_N(\mathbb{C}) $ be the generalized Pauli matrices given by their action on the standard basis $ |0 \ket, \dots, |N - 1 \ket $ 
of $ \mathbb{C}^N $ according to the formulas
$$
X|j \ket  = \omega^j |j \ket, \qquad Z|j \ket  = |j + 1 \ket, 
$$     
where we write $ \omega = e^{\frac{2\pi i}{N}} $ and calculate modulo $ N $. Then $ \mathcal{W} = \{X^jZ^k\}_{0 \leq j,k \leq N-1} $ is a unitary error 
basis for $ M_N(\mathbb{C}) $. 
\end{remark}

\subsection{Applications} 

We shall use Proposition \ref{constructpiW} to study the structure of the quantum Cuntz-Krieger algebras $ \FO(\G_2^K(N)) = \FO(K(M_N,\tr)) $ 
and $ \FO(\G_2^T(N)) = \FO(TM_N) $, by comparing them with $ \FO(\G_1^K(N)) $ and $ \FO(\G_1^T(N)) $, respectively. 
Recall from paragraph \ref{parQKn} that $ \FO(\G_1^K(N)) $ identifies canonically with the Cuntz algebra $ \O_{N^2} $. 
Note also that $ \FO(\G_1^T(N)) = \ast_{N^2} C(S^1) $ is the non-unital free product of $ N^2 $ copies of $ C(S^1) $, compare Proposition \ref{sumfreeproduct}.  

\begin{prop} \label{Cuntzembedding}
There are injective $ \ast $-homomorphisms
\begin{align*}
\pi_N^K&: \O_{N^2} \hookrightarrow M_N(\FO(K(M_N, \tr))) \\
\sigma_N^K&: \FO(K(M_N, \tr)) \hookrightarrow M_N(\O_{N^2})
\end{align*}
and 
\begin{align*}
\pi_N^T&: \ast_{N^2} C(S^1) \hookrightarrow M_N(\FO(TM_N)) \\
\sigma_N^T&: \FO(TM_N) \hookrightarrow M_N(\ast_{N^2}C(S^1))
\end{align*}
for all $ N \in \mathbb{N} $. 
\end{prop}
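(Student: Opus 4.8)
The plan is to build all four maps as composites of the lifted quantum isomorphisms of Theorem \ref{qisolift} with finite-dimensional representations of the linking algebras coming from a unitary error basis, and then to establish injectivity by a single \emph{teleportation} argument that works uniformly. First I would fix a unitary error basis $ \mathcal{W} = \{w_x\}_{x \in X} $ for $ M_N(\mathbb{C}) $, for instance the generalized Pauli basis of Remark \ref{pauli}. By Proposition \ref{constructpiW} this produces a unital $ \ast $-representation $ \pi_{\mathcal{W}} \colon C(G^+(\G_2(N), \G_1(N))) \to M_N(\mathbb{C}) $; the representation of the dense Hopf $ \ast $-algebra extends to the universal $ C^\ast $-completion by its universal property, since the target is finite dimensional. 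In the opposite direction I would obtain a companion representation $ \pi_{\mathcal{W}}' \colon C(G^+(\G_1(N), \G_2(N))) \to M_N(\mathbb{C}) $, either by repeating the construction of Proposition \ref{constructpiW} with the two graphs interchanged, or by composing $ \pi_{\mathcal{W}} $ with the antipode isomorphism $ S $ recalled after Theorem \ref{starbigaloischar} and the $ \ast $-preserving transpose anti-automorphism of $ M_N(\mathbb{C}) $. Since Proposition \ref{constructpiW} treats the complete and the trivial graphs simultaneously, the same constructions apply verbatim to the pairs $ \G_1^T(N), \G_2^T(N) $.

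Using the $ \ast $-homomorphisms $ \hat{\beta}_{ji} $ from Theorem \ref{qisolift} together with the identifications $ \FO(\G_1^K(N)) = \O_{N^2} $ and $ \FO(\G_1^T(N)) = \ast_{N^2} C(S^1) $, and the fact that $ \FO(\G_j) \otimes M_N(\mathbb{C}) = M_N(\FO(\G_j)) $, I would then set $ \pi_N^K = (\id \otimes \pi_{\mathcal{W}}) \circ \hat{\beta}_{21} $ and $ \sigma_N^K = (\id \otimes \pi_{\mathcal{W}}') \circ \hat{\beta}_{12} $, and define $ \pi_N^T, \sigma_N^T $ by the analogous formulas for the trivial graphs. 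Each of these is a $ \ast $-homomorphism with the required domain and codomain by construction, so the whole content of the proposition is injectivity, which, as the caveat at the end of Remark \ref{remhatinjective} stresses, is exactly the point that is not automatic.

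For injectivity there is a quick argument in one case: $ \O_{N^2} $ is simple for $ N \geq 2 $ (the case $ N = 1 $ being trivial), so $ \pi_N^K $ is injective as soon as it is nonzero, and applying the slice $ \id \otimes \tr $ (normalized trace on $ M_N $) to $ \pi_N^K(S_1(\delta_x)) $ yields $ \tfrac{1}{N^2} S_2(1) $, which is nonzero because the canonical map $ S_2 \colon M_N(\mathbb{C}) \to \FO(K(M_N, \tr)) $ is injective by the remark after Lemma \ref{QCChomo}. For the three remaining maps the domains are not known to be simple, and I would argue uniformly instead. The comultiplication identity $ (\hat{\beta}_{ij} \otimes \id)\hat{\beta}_{ji} = (\id \otimes \theta_i^j)\hat{\beta}_{ii} $ of Remark \ref{remhatinjective}, valid on the dense $ \ast $-subalgebra $ \C_i $, yields after composition with the error-basis representations
$$
(\sigma_N^K \otimes \id) \circ \pi_N^K = (\id \otimes \rho) \circ \hat{\beta}_{11}, \qquad \rho = (\pi_{\mathcal{W}}' \otimes \pi_{\mathcal{W}}) \circ \theta_1^2,
$$
where $ \rho $ is a finite-dimensional $ \ast $-representation of $ C(G^+(\G_1(N))) $ and $ \hat{\beta}_{11} $ is the coaction of Theorem \ref{qutprop}; both sides are bounded and agree on $ \C_1 $, hence on all of $ \O_{N^2} $. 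The symmetric identity expresses $ (\pi_N^K \otimes \id) \circ \sigma_N^K $ through $ \hat{\beta}_{22} $, and the complete analogues hold for the trivial graphs.

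The decisive point is that the vector state $ \omega_\Omega $ on $ M_N(\mathbb{C}) \otimes M_N(\mathbb{C}) $ determined by the maximally entangled vector $ \Omega $ satisfies $ \omega_\Omega \circ \rho = \epsilon $, the counit of $ \Poly(G^+(\G_1(N))) $. Granting this, $ (\id \otimes \omega_\Omega)(\sigma_N^K \otimes \id)\pi_N^K = (\id \otimes \epsilon)\hat{\beta}_{11} $ equals the identity on $ \C_1 $; since the left-hand side is bounded it is the identity on all of $ \O_{N^2} $, exhibiting a bounded left inverse of $ \pi_N^K $ and forcing $ \pi_N^K $ to be injective, and symmetrically $ \sigma_N^K $. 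The same computation with Proposition \ref{constructpiW} in its trivial-graph form disposes of $ \pi_N^T $ and $ \sigma_N^T $. I expect the identity $ \omega_\Omega \circ \rho = \epsilon $ to be the main obstacle: this is precisely where the two defining properties of a unitary error basis isolated in Proposition \ref{constructpiW}, namely the depolarizing property and the maximally entangled basis property, have to be combined with the explicit antipode $ S $ linking $ \pi_{\mathcal{W}} $ and $ \pi_{\mathcal{W}}' $, so that the two halves of the teleportation encoded in $ \theta_1^2 $ cancel and reproduce the trivial automorphism. Verifying this reduction to the counit, and checking that the tensor-leg bookkeeping in the composite $ (\sigma_N^K \otimes \id)\pi_N^K $ is consistent, is the only genuinely technical part of the argument.
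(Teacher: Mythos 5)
Your proposal is correct and follows essentially the same route as the paper: both construct the four maps by composing the lifted quantum isomorphisms of Theorem \ref{qisolift} with the unitary-error-basis representation of Proposition \ref{constructpiW}, and both prove injectivity by composing $\pi_N$ and $\sigma_N$ and slicing the two matrix legs to recover the identity on the dense $\ast$-subalgebra of generators. The only (cosmetic) difference is that the paper slices with the completely bounded multiplication map $m \circ (t \otimes \id)$ to land on $a \otimes 1$, whereas you slice with the maximally entangled vector state $\omega_\Omega = \tr \circ\, m \circ (t \otimes \id)$; your key identity $\omega_\Omega \circ \rho = \epsilon$ is exactly the cancellation $\sum_{rs} V^{sr}_y V^{rs}_x = \tfrac{1}{N}\delta_{xy} 1$ that the paper verifies directly from the relations in Proposition \ref{constructpiW}.
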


\begin{proof}
The construction of these maps for trivial quantum graphs is virtually identical to the one for complete quantum graphs. In order to treat both cases 
simultaneously we will therefore write $ \G_1(N) $ and $ \G_2(N) $ to denote either $ \G_1^K(N) $ and $ \G_2^K(N) $, or $ \G_1^T(N) $ 
and $ \G_2^T(N) $, respectively. Our task is then to define injective $ \ast $-homomorphisms
\begin{align*}
\pi_N&: \FO(\G_1(N)) \hookrightarrow M_N(\FO(\G_2(N))) \\
\sigma_N&: \FO(\G_2(N)) \hookrightarrow M_N(\FO(\G_1(N)))
\end{align*}
for $ N \in \mathbb{N} $. 

Since the quantum graphs $ \G_1(N) $ and $ \G_2(N) $ are quantum isomorphic, Theorem \ref{qisolift} yields natural $ \ast $-homomorphisms
\begin{align*}
&\hat{\beta}: \FO(\G_1(N)) \to \FO(\G_2(N)) \otimes C(G^+(\G_2(N), \G_1(N))) \\
&\hat{\gamma}: \FO(\G_2(N)) \to \FO(\G_1(N)) \otimes C(G^+(\G_2(N), \G_1(N)))^{op}, 
\end{align*} 
taking into account that $ C(G^+(\G_1(N), \G_2(N))) \cong C(G^+(\G_2(N), \G_1(N)))^{op} $. 

In order to give explicit formulas for these maps let $ X $ be a set of cardinality $ N^2 $ and denote the standard generators 
of $ \FO(\G_1(N)) $ by $ S_x = S_1(N^2 e_x) $ for $ x \in X $. Similarly, write $ S_{rs} = S_2(N e_{rs}) $ for the standard generators 
of $ \FO(\G_2(N)) = \FO(\G_2) $. Here $ S_1: \mathbb{C}^{N^2} \rightarrow \FO(\G_1(N)) $ and $ S_2: M_N(\mathbb{C}) \rightarrow \FO(\G_2(N)) $ are the 
canonical linear maps. 
Then we calculate 
\begin{align*}
\hat{\beta}(S_x) &= N \sum_{rs} S_{rs} \otimes v^{rs}_x, \\
\hat{\gamma}(S_{rs}) &= \sum_x S_x \otimes v^{sr}_x, 
\end{align*}
where $ v^{rs}_x $ for $ 1 \leq r,s \leq N, x \in X $ are the standard generators of the linking algebra $ \O(G^+(\G_2(N), \G_1(N))) $, 
see Proposition \ref{qisorelations}. 

Consider now the $ \ast $-homomorphism $ \pi_\mathcal{W}: \Poly(G^+(\G_2(N),\G_1(N))) \to M_N(\mathbb{C}) $ obtained in Proposition \ref{constructpiW}. 
Slicing the maps $ \hat{\beta}, \hat{\gamma} $ with $ \pi_\mathcal{W} $, we define the desired $ \ast $-homomorphims $ \pi_N, \sigma_N $ by 
\begin{align*}
\pi_N &= (\id \otimes \pi_{\mathcal W}) \hat{\beta}, \\
\sigma_N &= (\id \otimes t) (\id \otimes \pi_{\mathcal W}^{op}) \hat{\gamma}, 
\end{align*}
using the isomorphism $ t: M_N(\mathbb{C})^{op} \cong M_N(\mathbb{C}) $ given by sending a matrix $ Y $ to its transpose $ Y^t $. 
Concretely, if we let $ V^{rs}_x $ be constructed out of a unitary error basis as in Proposition \ref{constructpiW} then we have 
\begin{align*}
\pi_N(S_x) &= N \sum_{rs} S_{rs} \otimes V^{rs}_x, \\
\sigma_N(S_{rs}) &= \sum_x S_x \otimes (V^{sr}_x)^t.  
\end{align*}
Let us denote by $ m: M_N(\mathbb{C})^{op} \otimes M_N(\mathbb{C}) \to M_N(\mathbb{C}), m(a^{op} \otimes b) = ab $ the multiplication map. 
Using the relations in Proposition \ref{constructpiW} we readily see that 
$$
(\id \otimes m)(\id \otimes t \otimes \id)(\sigma_N \otimes \id)\pi_N(a) = a \otimes 1 
$$
for all $ a $ contained in the $ \ast $-algebra generated by the elements $ S_x $ for $ x \in X $.
Similarly, we have 
$$
(\id \otimes m)(\id \otimes t \otimes \id)(\pi_N \otimes \id)\sigma_N(b) = b \otimes 1 
$$
for all $ b $ contained in the $ \ast $-algebra generated by the elements $ S_{rs} $. 
Since $ m $ is completely bounded, it follows by continuity that $ \pi_N $ and $ \sigma_N $ are injective. 
\end{proof}

Let us continue to use the notation from above and denote the embeddings obtained in Proposition \ref{Cuntzembedding} by $ \pi_N $ and $ \sigma_N $, 
referring to either the trivial or complete quantum graphs $ \G_1(N), \G_2(N) $. Following ideas of Gao, Harris and Junge \cite{GHJteleportation}, we shall 
refine these embeddings and realize each of $ \FO(\G_1(N)) $ and $ \FO(\G_2(N)) $ as an iterated crossed product of the other algebra with respect to 
certain $ \mathbb{Z}_N $-actions, up to tensoring with matrices. This is indeed very much related to the work in \cite{GHJteleportation}, which exhibited a 
similar connection between free group $ C^\ast $-algebras and Brown's universal non-commutative unitary algebras.

In the sequel we write again $ \omega = e^{\frac{2 \pi i}{N}} $ and calculate modulo $ N $. We relabel the generators of $ \FO(\G_i(N)) $ 
in the proof of Proposition \ref{Cuntzembedding} by $ S^{(i)}_{kl} $ for $ i = 1,2 $, with indices $ 0 \leq k,l \leq N-1 $, and let $ X, Z $ be the 
generalized Pauli matrices from Remark \ref{pauli}.  

Let us consider the following order $ N $ automorphisms $ \alpha_j \in \Aut(\FO(\G_2(N))) $ for $ j = 1,2 $ given on generators by
$$
\alpha_1(S^{(2)}_{kl}) = \omega^{k-l} S^{(2)}_{kl} \qquad \alpha_2(S^{(2)}_{kl}) = S^{(2)}_{k-1, l-1}. 
$$
Note that $ \alpha_1 $ and $ \alpha_2 $ can be viewed as examples of gauge automorphisms as in paragraph \ref{pargauge}. 
More precisely, they are the gauge automorphisms associated with the unitaries $ X, Z $ in the sense that  
$$
(\alpha_1 \otimes \id)(S) = (1 \otimes X) S (1 \otimes X^*) \qquad (\alpha_2 \otimes \id)(S) = (1 \otimes Z^*) S (1 \otimes Z)
$$
for $ S = (S^{(2)}_{ij}) \in \FO(\G_2(N)) \otimes M_N(\mathbb{C}) $. 
Similarly, we define order $ N $ automorphisms $ \beta_j \in \Aut(\FO(\G_1(N))) $ for $ j = 1,2 $ by 
$$
\beta_1(S^{(1)}_{kl}) = S^{(1)}_{k-1,l} \qquad \beta_2(S^{(1)}_{kl}) = S^{(1)}_{k, l-1}. 
$$ 
Clearly, all these automorphisms define actions of $ \mathbb{Z}_N $ on $ \FO(\G_2(N)) $ and $ \FO(\G_1(N)) $, respectively. 
From the relation $ XZ = \omega ZX $ it follows that both pairs of actions $ \alpha_1, \alpha_2 $ and $ \beta_1, \beta_2 $ mutually commute. 

Let us now consider the iterated crossed products 
$$
\FO(\G_2(N)) \rtimes_{\alpha_1} \mathbb{Z}_N \rtimes_{\alpha_2} \mathbb{Z}_N, \qquad \FO(\G_1(N)) \rtimes_{\beta_1} \mathbb{Z}_N \rtimes_{\beta_2} \mathbb{Z}_N, 
$$
where $ \alpha_2, \beta_2 $ are naturally extended to the crossed products by letting $ \mathbb{Z}_N $ act on itself through appropriate dual actions. 
More precisely, given $ a \in \FO(\G_2(N)), b \in \FO(\G_1(N)) $ and $ g \in \mathbb{Z}_N \cong \{0, \ldots, N-1\} $, we let 
$$
\alpha_2(a u_g) = \alpha_2(a) \omega^g u_g, \qquad \beta_2(b u_g) = \beta_2(g) \omega^{g} u_g. 
$$ 
Abstractly, the algebra $ \FO(\G_2(N)) \rtimes_{\alpha_1} \mathbb{Z}_N \rtimes_{\alpha_2} \mathbb{Z}_N $ is the universal $ C^\ast $-algebra spanned by 
elements of the form
$$
x = \sum_{j,k = 0}^{N - 1} a_{jk} v^j w^k,  
$$
where $ a_{jk} \in \FO(\G_2(N)) $ and $ v, w $ are unitaries, such that the relations
$$
v^N = w^N = 1, \qquad v a_{jk} = \alpha_1(a_{jk}) v, \qquad w a_{jk} = \alpha_2(a_{jk}) w = a_{jk}, \qquad wv = \omega vw
$$
are satisfied. A similar description holds for $ \FO(\G_1(N)) \rtimes_{\beta_1} \mathbb{Z}_N \rtimes_{\beta_2} \mathbb{Z}_N $.

Our aim is to establish the following description of the iterated crossed products obtained in this way. 

\begin{theorem} \label{crossedproducts}
For the double crossed products with respect to the actions of $ \mathbb{Z}_N $ introduced above one obtains $ \ast $-isomorphisms 
\begin{align*}
M_N(\FO(K(M_N(\mathbb{C}), \tr))) &\cong \O_{N^2} \rtimes_{\beta_1} \mathbb{Z}_N \rtimes_{\beta_2} \mathbb{Z}_N \\
M_N(\O_{N^2}) &\cong \FO(K(M_N(\mathbb{C}), \tr)) \rtimes_{\alpha_1} \mathbb{Z}_N \rtimes_{\alpha_2} \mathbb{Z}_N 
\end{align*}
and 
\begin{align*}
M_N(\FO(TM_N)) &\cong (\ast_{N^2} C(S^1) )\rtimes_{\beta_1} \mathbb{Z}_N \rtimes_{\beta_2} \mathbb{Z}_N \\
M_N(\ast_{N^2}C(S^1)) &\cong \FO(TM_N)) \rtimes_{\alpha_1} \mathbb{Z}_N \rtimes_{\alpha_2} \mathbb{Z}_N  
\end{align*}
for all $ N \in \mathbb{N} $. 
\end{theorem}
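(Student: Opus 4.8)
The plan is to realize all four isomorphisms as instances of a single construction, treating $ \G_1(N), \G_2(N) $ abstractly so that the complete and trivial cases are handled at once, exactly as Propositions \ref{Cuntzembedding} and \ref{constructpiW} were. I will concentrate on the isomorphism $ M_N(\FO(\G_2(N))) \cong \FO(\G_1(N)) \rtimes_{\beta_1}\mathbb{Z}_N\rtimes_{\beta_2}\mathbb{Z}_N $; the companion statement is proved the same way with the roles of $ \pi_N $ and $ \sigma_N $ (and of $ \alpha_j $ and $ \beta_j $) interchanged.

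The first step is to use the embedding $ \pi_N\colon\FO(\G_1(N))\hookrightarrow M_N(\FO(\G_2(N))) $ of Proposition \ref{Cuntzembedding} to make the $ \beta $-action inner. Taking the generalized Pauli matrices $ X,Z $ of Remark \ref{pauli}, I set $ V = 1\otimes X $ and $ W = 1\otimes Z $, regarded as unitaries in the multiplier algebra $ M_N(M(\FO(\G_2(N)))) $ (multipliers are needed since $ \FO(TM_N) $ is non-unital). Using $ w_{(k,l)} = X^kZ^l $ and $ \pi_N(S_{(k,l)}) = \sum_{rs}S_{rs}\otimes w_{(k,l)}^*e_{rs}w_{(k,l)} $, a short computation shows that the phases produced by $ Xw_x^* $ and $ w_xX^* $ cancel, so that $ V\pi_N(b)V^* = \pi_N(\beta_1(b)) $ and $ W\pi_N(b)W^* = \pi_N(\beta_2(b)) $ for all $ b $. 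Together with $ V^N = W^N = 1 $ and the Heisenberg relation $ XZ = \omega ZX $ (which matches $ wv = \omega vw $ after replacing the second implementing unitary by its inverse, merely reparametrizing the second copy of $ \mathbb{Z}_N $), this is a covariant representation, so the universal property of the iterated crossed product yields a $ \ast $-homomorphism $ \Phi\colon \FO(\G_1(N))\rtimes_{\beta_1}\mathbb{Z}_N\rtimes_{\beta_2}\mathbb{Z}_N\to M_N(\FO(\G_2(N))) $ with $ b\mapsto\pi_N(b) $, $ v\mapsto V $, $ w\mapsto W $. Surjectivity of $ \Phi $ is then easy: since $ X,Z $ generate $ M_N(\mathbb{C}) $, the image contains every scalar matrix $ 1\otimes e_{ij} $, and the identity $ S_{rs}\otimes w_x^*e_{pq}w_x = (1\otimes w_x^*e_{pr}w_x)\,\pi_N(S_x)\,(1\otimes w_x^*e_{sq}w_x) $ then shows that each $ S_{rs}\otimes e_{ij} $ lies in the image, and these generate $ M_N(\FO(\G_2(N))) $.

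The main obstacle is injectivity, which I will treat by exhibiting an explicit inverse rather than by a soft faithfulness argument: the $ \FO $'s are not nuclear, and, crucially, we may not yet invoke $ \FO(K(M_N,\tr))\cong\O_{N^2} $, since Theorem \ref{quantumcompletemain} is established only later and relies on the present result. The unitaries $ v,w $ generate a copy of the twisted group $ C^\ast $-algebra $ C^\ast(\mathbb{Z}_N^2,\omega)\cong M_N(\mathbb{C}) $ inside the crossed product $ C $, producing internal matrix units $ E_{ij} $ and hence a decomposition $ C\cong M_N(E_{11}CE_{11}) $. I will then build a quantum Cuntz--Krieger $ \G_2(N) $-family inside the corner $ E_{11}CE_{11} $ out of the generators of $ \FO(\G_1(N))\subset C $ sandwiched between the internal matrix units, mirroring the formula $ \sigma_N(S_{rs}) = \sum_x S_x\otimes(V^{sr}_x)^t $, and use the universal property of $ \FO(\G_2(N)) $ to obtain a map inverting the corner. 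Verifying that the two composites with $ \Phi $ are the identity reduces precisely to the round-trip identities $ (\id\otimes m)(\id\otimes t\otimes\id)(\sigma_N\otimes\id)\pi_N(a) = a\otimes 1 $ and its companion established in the proof of Proposition \ref{Cuntzembedding}, together with the unitary-error-basis relations of Proposition \ref{constructpiW}.

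I expect the genuinely delicate point to be this last bookkeeping. The internal $ M_N(\mathbb{C}) $ coming from $ v,w $ and the copy of $ \FO(\G_1(N)) $ do \emph{not} commute inside $ C $ (conjugation by $ v,w $ implements $ \beta_1,\beta_2 $), so one cannot simply transport $ \sigma_N $ as a map on a tensor product; the family in the corner must be written down and its relations checked directly, which is exactly where the depolarizing and entanglement properties do the work. Alongside this one must correctly track the transpose and the opposite-algebra twist appearing in $ \sigma_N $, reconcile the internal matrix units arising from the Pauli relations with the standard matrix units of the external $ M_N(\mathbb{C}) $, and carry the whole argument in multiplier algebras to accommodate non-unitality. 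Once the composites are identified with the identities of Proposition \ref{Cuntzembedding}, continuity of the $ \ast $-homomorphisms involved forces them to hold on all of the respective $ C^\ast $-algebras, so $ \Phi $ is an isomorphism; the remaining three isomorphisms of Theorem \ref{crossedproducts} then follow by the symmetric argument.
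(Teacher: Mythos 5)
Your construction of the covariant representation and the resulting map $\Phi$ from the iterated crossed product to $M_N(\FO(\G_2(N)))$, together with the surjectivity argument, matches the paper's proof (the paper takes $w\mapsto 1\otimes Z^*$ rather than $1\otimes Z$, but as you note this is only a relabelling of the second copy of $\mathbb{Z}_N$). The gap is in the injectivity step, which you only outline. Your plan --- extract internal matrix units $E_{ij}$ from $v,w$ via $C^\ast(\mathbb{Z}_N^2,\omega)\cong M_N(\mathbb{C})$, write the crossed product as $M_N(E_{11}CE_{11})$, and build a quantum Cuntz--Krieger $\G_2(N)$-family in the corner --- is plausible in principle, but the two essential verifications are exactly the ones you defer: that the proposed family in the corner actually satisfies the quantum Cuntz--Krieger relations (the twists by $\beta_1,\beta_2$ produced by moving the $E_{ij}$ past the generators $S_x$ must cancel in the cubic relation, and this is not a formal consequence of anything already proved), and that $\Psi\circ\Phi=\id$ holds on the copy of $\FO(\G_1(N))$. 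Your claim that the latter ``reduces precisely to the round-trip identities'' of Proposition \ref{Cuntzembedding} is not correct as stated: those identities involve the slice-and-multiply map $(\id\otimes m)(\id\otimes t\otimes\id)$, which is completely bounded but not a $\ast$-homomorphism, so they provide a completely bounded left inverse of $\pi_N$ rather than the $\ast$-homomorphic identity $\Psi\circ\pi_N=\iota$ that your argument needs.

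The paper closes injectivity by a different and complete argument, which you ruled out prematurely. It composes $\sigma_N''$ with $\pi_N\otimes\id$ (legitimate, since injectivity of $\pi_N$ was already established in Proposition \ref{Cuntzembedding}), computes the composite on generators, and conjugates by the unitary $V(|j\rangle\otimes|k\rangle)=\omega^{-jk}|\xi_{jk}\rangle$ built from the maximally entangled basis; the result is exactly $a\mapsto\sum_{l,n}\alpha_1^{-l}\alpha_2^{-n}(a)\otimes e_{ll}\otimes e_{nn}$, $v\mapsto 1\otimes Z\otimes 1$, $w\mapsto 1\otimes X\otimes Z$, i.e.\ the canonical surjection of the full onto the reduced iterated crossed product, which is injective because $\mathbb{Z}_N$ is amenable (and symmetrically for $\pi_N''$). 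This is not a soft faithfulness argument of the kind you were avoiding: it uses neither nuclearity of the quantum Cuntz--Krieger algebras nor the identification $\FO(K(M_N(\mathbb{C}),\tr))\cong\O_{N^2}$, whose proof in Section \ref{secquantumcomplete} in fact does not depend on the present theorem. As it stands, your injectivity step is a programme rather than a proof; either carry out the corner construction in full or switch to the regular-representation argument.
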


In order to prove Theorem \ref{crossedproducts} we will construct the required isomorphisms explicitly, using again uniform notation 
to treat the cases of trivial and complete quantum graphs simultaneously. 

Consider the unitary error basis $ \W = \{X^j Z^k\}_{0 \leq j,k \leq N - 1} $ for $ M_N(\mathbb{C}) $ described in Remark \ref{pauli}.  
Moreover let $ \pi_N: \FO(\G_1(N)) \to \FO(\G_2(N)) \otimes M_N(\mathbb{C}) $ and $ \sigma_N: \FO(\G_2(N)) \to \FO(\G_1(N)) \otimes M_N(\mathbb{C}) $ be the 
corresponding embeddings constructed in the proof of Proposition \ref{Cuntzembedding}. That is, if we set 
$$ 
V^{rs}_{lm} = \frac{1}{N}(X^lZ^m)^*e_{rs}(X^lZ^m) = \frac{1}{N}\omega^{-(r-s)l}e_{r-m,s-m}, 
$$
and use our previous notation for the generators of $ \FO(\G_j(N)) $, then we obtain
$$
\pi_N(S^{(1)}_{lm}) = N \sum_{0 \le r,s \le N-1} S^{(2)}_{rs} \otimes V^{rs}_{lm} 
= \sum_{0 \le r,s \le N-1} S^{(2)}_{rs} \otimes \omega^{-(r-s)l} e_{r-m,s-m}
$$
and 
$$
\sigma_N(S^{(2)}_{jk}) = \sum_{0 \le l,m \le N-1} S^{(1)}_{lm} \otimes (V^{kj}_{lm})^{t}  
= \frac{1}{N} \sum_{0 \le l,m \le N-1} S^{(1)}_{lm} \otimes \omega^{(j-k)l} e_{j-m,k-m}.
$$
From the above formulas we can easily see that 
\begin{align*}
(1 \otimes X)\sigma_N(S^{(2)}_{jk})(1 \otimes X^*) &= \sigma_N(\alpha_1(S^{(2)}_{jk})) \\
(1 \otimes X)\pi_N(S^{(1)}_{jk})(1 \otimes X^*) &= \pi_N(\beta_1(S^{(1)}_{jk})). 
\end{align*} 
Hence $ (\sigma_N, (1 \otimes X)) $ defines a covariant representation of the dynamical system $ (\FO(\G_2(N)), \mathbb{Z}_N, \alpha_1) $, 
and similarly  $ (\pi_N, 1 \otimes X) $ defines a covariant representation of $ (\FO(\G_1(N)), \mathbb{Z}_N, \beta_1) $. 
As a consequence, we obtain $ \ast $-homomorphisms 
\begin{align*} 
\sigma_N'&: \FO(\G_2(N)) \rtimes_{\alpha_1} \mathbb{Z}_N \to \FO(\G_1(N)) \otimes M_N(\mathbb{C}) \\
\pi_N'&: \FO(\G_1(N)) \rtimes_{\beta_1} \mathbb{Z}_N \to \FO(\G_2(N)) \otimes M_N(\mathbb{C}) 
\end{align*} 
satisfying
\begin{align*}
\sigma_N'(a) &= \sigma_N(a), \qquad \sigma_N'(v) = 1 \otimes X, \\
\pi_N'(b) &= \pi_N(b), \qquad \pi_N'(v) = 1 \otimes X,  
\end{align*}
where $ a \in \FO(\G_2(N)) $ and $ b \in \FO(\G_1(N)) $, respectively. 
Similarly, we compute 
\begin{align*}
(1 \otimes Z^*) \sigma_N(S^{(2)}_{jk})(1 \otimes Z) &= \sigma_N(\alpha_2(S^{(2)}_{jk})), \\
(1 \otimes Z^*)(1 \otimes X) &= \omega (1 \otimes X)(1 \otimes Z^*), \\
(1 \otimes Z^*) \pi_N(S^{(1)}_{jk})(1 \otimes Z) &= \pi_N(\beta_2(S^{(1)}_{jk})). 
\end{align*} 
Hence $ (\sigma_N', (1 \otimes Z^*)) $ defines a covariant representation of the dynamical 
system $ (\FO(\G_2(N)) \rtimes_{\alpha_1} \mathbb{Z}_N, \mathbb{Z}_N, \alpha_2) $, 
and $ (\pi_N', (1 \otimes Z^*)) $ defines a covariant representation of $ (\FO(\G_1(N)) \rtimes_{\beta_1} \mathbb{Z}_N, \mathbb{Z}_N, \beta_2) $. 
In the same way as before we obtain associated $ \ast $-homomorphisms 
\begin{align*} 
\sigma_N''&: \FO(\G_2(N)) \rtimes_{\alpha_1} \mathbb{Z}_N \rtimes_{\alpha_2} \mathbb{Z}_N \to \FO(\G_1(N)) \otimes M_N(\mathbb{C}) \\
\pi_N''&: \FO(\G_1(N)) \rtimes_{\beta_1} \mathbb{Z}_N \rtimes_{\beta_2} \mathbb{Z}_N \to \FO(\G_2(N)) \otimes M_N(\mathbb{C}), 
\end{align*}
satisfying
\begin{align*}
\sigma_N''(a) &= \sigma_N(a), \qquad \sigma_N''(v) = \sigma_N'(v) = 1 \otimes X, \qquad \sigma_N''(w) = 1 \otimes Z^*,  \\
\pi_N''(b) &= \pi_N(b), \qquad \pi_N''(v) = \pi_N'(v) = 1 \otimes X, \qquad \pi_N''(w) = 1 \otimes Z^*,  
\end{align*}
respectively, where $ a \in \FO(\G_2(N)), b \in \FO(\G_1(N)) $. 

With these constructions in place, Theorem \ref{crossedproducts} is a consequence of the following assertion.   

\begin{theorem}
The maps $ \sigma_N'' $ and $ \pi_N'' $ are isomorphisms. 
\end{theorem}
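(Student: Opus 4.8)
The plan is to treat $ \sigma_N'' $ and $ \pi_N'' $ on the same footing, using the uniform notation $ \G_1(N), \G_2(N) $ for either the complete or the trivial pair, and to show that each map is simultaneously surjective and injective. By the evident symmetry of the construction it suffices to argue for $ \sigma_N'' \colon \FO(\G_2(N)) \rtimes_{\alpha_1} \mathbb{Z}_N \rtimes_{\alpha_2} \mathbb{Z}_N \to \FO(\G_1(N)) \otimes M_N(\mathbb{C}) $, the case of $ \pi_N'' $ being identical after exchanging the roles of the two quantum graphs and of the pairs $ (\alpha_1,\alpha_2) $ and $ (\beta_1,\beta_2) $. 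Throughout I write $ A = \FO(\G_1(N)) $ and $ B = \FO(\G_2(N)) $, and recall that $ \sigma_N'' $ restricts to $ \sigma_N $ on $ B $ and sends the crossed product unitaries to $ \sigma_N''(v) = 1 \otimes X $ and $ \sigma_N''(w) = 1 \otimes Z^* $.

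For surjectivity, I would first observe that since $ X $ and $ Z $ generate $ M_N(\mathbb{C}) $, the image of $ \sigma_N'' $ contains $ 1 \otimes M_N(\mathbb{C}) $, and in particular all matrix units $ 1 \otimes e_{pq} $. Combining these with the explicit formula $ \sigma_N(S^{(2)}_{jk}) = \frac{1}{N} \sum_{l,m} S^{(1)}_{lm} \otimes \omega^{(j-k)l} e_{j-m,k-m} $ and multiplying on either side by suitable $ 1 \otimes e_{pq} $, one isolates expressions of the form $ \frac{1}{N} \sum_l \omega^{tl} S^{(1)}_{l,n} \otimes e_{pq} $, in which the phase $ t $ and the second index $ n $ can be prescribed independently by an appropriate choice of $ j,k $. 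A discrete Fourier inversion in the variable $ t $ over $ \mathbb{Z}_N $ then recovers each $ S^{(1)}_{l_0,n} \otimes e_{pq} $ individually. Since the elements $ S^{(1)}_{lm} $ generate $ A $, these span a dense subalgebra and $ \sigma_N'' $ is onto.

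The main obstacle is injectivity. My plan is to produce an explicit left inverse $ \Phi \colon A \otimes M_N(\mathbb{C}) \to \FO(\G_2(N)) \rtimes_{\alpha_1} \mathbb{Z}_N \rtimes_{\alpha_2} \mathbb{Z}_N $ with $ \Phi \circ \sigma_N'' = \id $; together with the surjectivity just established this forces $ \sigma_N'' $ to be an isomorphism. The candidate for $ \Phi $ is dictated by the inversion above. Inside the double crossed product the unitaries $ v, w $ satisfy the Weyl relations $ v^N = w^N = 1 $ and $ wv = \omega vw $, so by the Stone--von Neumann theorem for the finite Heisenberg group they generate a copy of $ M_N(\mathbb{C}) $ (the surjection from the universal algebra for these relations cannot kill the simple algebra $ M_N(\mathbb{C}) $), with matrix units $ E_{pq} $. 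I would set $ \Phi(1 \otimes e_{pq}) = E_{pq} $ and define $ \Phi(S^{(1)}_{l_0,n} \otimes 1) $ by the very Fourier formula that recovered $ S^{(1)}_{l_0,n} \otimes e_{pq} $ in the surjectivity step, with each occurrence of $ \sigma_N(S^{(2)}) $ replaced by the corresponding generator of $ B $ and each $ 1 \otimes e_{pq} $ replaced by $ E_{pq} $. Verifying that $ \Phi $ is a well-defined $ \ast $-homomorphism is the crux: one must check that it respects the free Cuntz--Krieger relations defining $ A $ and interacts correctly with $ M_N(\mathbb{C}) $, which I expect to follow from the relations established in Proposition \ref{constructpiW} together with the identities appearing in the proof of Proposition \ref{Cuntzembedding}, namely exactly the computations already used there to show that $ \sigma_N $ and $ \pi_N $ are injective.

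Equivalently, and perhaps more transparently, I would exploit the tensor splitting induced by the copy $ C^*(v,w) \cong M_N(\mathbb{C}) $: a unital copy of $ M_N(\mathbb{C}) $ splits the ambient algebra as $ M_N(\mathbb{C}) \otimes R $ with $ R $ the relative commutant, and the problem reduces to identifying $ R $ with $ A $ in a way compatible with $ \sigma_N'' $. The one point demanding care here is \emph{unitality}: for the trivial graph $ \FO(TM_N) $ is non-unital, so that $ v, w $ and their matrix units $ E_{pq} $ live a priori in the multiplier algebra, and the splitting must be performed at the level of multipliers (alternatively one works throughout with the canonical right unit $ e $). This is the only genuinely delicate issue; the remaining steps are the same bookkeeping with roots of unity and matrix units that already underlies Propositions \ref{constructpiW} and \ref{Cuntzembedding}.
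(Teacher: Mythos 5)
Your surjectivity argument is correct and is essentially the paper's: since $ C^\ast(X, Z^\ast) = M_N(\mathbb{C}) $, the image of $ \sigma_N'' $ contains $ 1 \otimes M_N(\mathbb{C}) $ together with $ \sigma_N(\FO(\G_2(N))) $, and these generate $ \FO(\G_1(N)) \otimes M_N(\mathbb{C}) $; your Fourier-inversion refinement is a valid (if unnecessary) elaboration of this.

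The injectivity half, however, has a genuine gap. Your plan is to build a left inverse out of the universal property of $ \FO(\G_1(N)) $, sending $ 1 \otimes e_{pq} $ to matrix units $ E_{pq} $ in $ C^\ast(v,w) \cong M_N(\mathbb{C}) $ and $ S^{(1)}_{l_0 n} \otimes 1 $ to the Fourier-inverted expressions in the $ S^{(2)}_{jk} $, $ v $, $ w $. But the well-definedness of that assignment --- the verification that these elements of the double crossed product satisfy the defining relations of $ \FO(\G_1(N)) \otimes M_N(\mathbb{C}) $ (the Cuntz relations of $ \O_{N^2} $ in the complete case, the relations of $ \ast_{N^2} C(S^1) $ in the trivial case) --- is essentially the entire content of the theorem, and you explicitly defer it (``which I expect to follow\dots''). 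Your alternative route suffers the same defect: identifying the relative commutant of $ C^\ast(v,w) $ inside the crossed product with $ \FO(\G_1(N)) $ is the statement in disguise. The paper avoids ever having to check relations for a new homomorphism. It composes $ \sigma_N'' $ with the already-constructed map $ \pi_N \otimes \id $, conjugates by the unitary $ V $ built from the maximally entangled basis $ \{\xi_{jk}\} $, and recognizes $ \ad(1 \otimes V^\ast)(\pi_N \otimes \id)\sigma_N'' $ as exactly the canonical surjection from the full onto the reduced crossed product, acting by $ a \mapsto \sum_{l,n} \alpha_1^{-l}\alpha_2^{-n}(a) \otimes e_{ll} \otimes e_{nn} $, $ v \mapsto 1 \otimes Z \otimes 1 $, $ w \mapsto 1 \otimes X \otimes Z $; this surjection is injective because $ \mathbb{Z}_N $ is finite, hence amenable. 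That composition-plus-amenability device is the missing idea; without it, or without actually carrying out the relation checks you postpone, the argument does not close.
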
 

\begin{proof} 
Using $ C^\ast(X, Z^*) = M_N(\mathbb{C}) $ and the description of $ \sigma_N'' $ given above it is easy to see that $ \sigma_N'' $ is surjective. 
Explicitly, the range of $ \sigma_N'' $ contains $ \sigma_N(\FO(\G_2(N))) $ and $ 1 \otimes M_N(\mathbb{C}) $, 
and these two algebras generate $ \FO(\G_1(N)) \otimes M_N(\mathbb{C}) $. 

To show that $ \sigma_N'' $ is injective consider the $ \ast $-homomorphism 
$$
(\pi_N \otimes \id) \sigma_N: \FO(\G_2(N)) \to \FO(\G_2(N)) \otimes M_N(\mathbb{C}) \otimes M_N(\mathbb{C}). 
$$
If we denote by $ \{|\xi_{jk} \ket \}_{0 \le j,k \le N-1} \subset \mathbb{C}^N \otimes \mathbb{C}^N $ the orthonormal basis of maximally entangled 
vectors given by 
$$
\xi_{jk} = \frac{1}{\sqrt{N}} \sum_{r = 0}^{N-1} X^j Z^k |r \ket \otimes | r\ket, 
$$ 
then one obtains 
\begin{align*}
(\pi_N \otimes \id) \sigma_N''(S^{(2)}_{jk}) &= \frac{1}{N} \sum_{lmrs} S^{(2)}_{rs} \otimes \omega^{-(r-s)l} e_{r-m, s-m} \otimes \omega^{(j-k)l}e_{j-m,k-m} \\
&= \sum_{sm} S^{(2)}_{s+j-k,s} \otimes e_{s+j-k-m, s-m} \otimes e_{j-m, k-m} \\
&= \sum_{nm} S^{(2)}_{n+j, n+k} \otimes e_{n+j-m,n+k-m} \otimes e_{j-m,k-m} \\
&= \sum_{ln} \alpha_1^{-l}\alpha_2^{-n}(S^{(2)}_{jk}) \otimes |\xi_{l,n}\ket \bra \xi_{l,n}|. 
\end{align*}
Next, we define a unitary $ V $ on $ \mathbb{C}^N \otimes \mathbb{C}^N $ by setting $ V(|j\ket \otimes |k\ket) = \omega^{-jk}|\xi_{jk} \ket $. 
Then we have 
$$
V^*(1 \otimes X)V = Z \otimes 1, 
\qquad V^*(1 \otimes Z^*) V = X \otimes Z.
$$ 
Thus, if we consider the $ \ast $-homomorphism 
$$
\Phi: \FO(\G_2(N)) \rtimes_{\alpha_1} \mathbb{Z}_N \rtimes_{\alpha_2} \mathbb{Z}_N \to \FO(\G_2(N)) \otimes M_N(\mathbb{C}) \otimes M_N(\mathbb{C})
$$ 
given by $ \Phi = \ad(1 \otimes V^*)(\pi_N \otimes \id)\sigma_N'' $, then we get 
$$
\Phi(a) = \sum_{l,n} \alpha_1^{-l}\alpha_2^{-n}(a) \otimes e_{ll} \otimes e_{nn} 
$$
for all $ a \in \FO(\G_2(N)) $, and also 
$$
\Phi(v) = (1 \otimes V^*)(\pi_N \otimes \id) \sigma_N''(v)(1 \otimes V) = 1 \otimes V^*(1 \otimes X)V = 1 \otimes Z \otimes 1
$$ 
and 
$$
\Phi(w) = (1 \otimes V^*)(\pi_N \otimes \id) \sigma_N''(w)(1 \otimes V) = 1 \otimes V^*(1 \otimes Z^*)V = 1 \otimes X \otimes Z.
$$
From these formulas it follows that the image of $ \Phi $ is exactly the reduced crossed 
product $ \FO(\G_2(N)) \rtimes_{\alpha_1,r} \mathbb{Z}_N \rtimes_{\alpha_2,r} \mathbb{Z}_N $, 
and $ \Phi $ is none other than the canonical quotient map from the full crossed product to the reduced crossed product. 
Since $ \mathbb{Z}_N $ is finite, and hence amenable, the map $ \Phi $ is an isomorphism, forcing $ \sigma_N''$ to be injective. This proves the claim 
for $ \sigma_N'' $. 

For $ \pi_N'' $ one proceeds in a similar way, essentially by swapping the roles of the maps $ \pi_N $ and $ \sigma_N $ and repeating the above arguments.
\end{proof}

\begin{remark}
The first pair of isomorphisms in Theorem \ref{crossedproducts} 
should not come as a great surprise, given that Theorem \ref{quantumcompletemain} in section \ref{secexamples} already asserts an 
isomorphism $ \FO(K(M_N(\mathbb{C}), \tr)) \cong \O_{N^2} $. 
In fact, the latter isomorphism can be verified by considering the injective $ \ast $-homomorphism $ \sigma_N^K: \FO(K(M_N(\mathbb{C}), \tr)) \to M_N(\O_{N^2}) $ 
obtained in Proposition \ref{Cuntzembedding} and inspecting the relations in the proof of Proposition \ref{constructpiW}. In the next section we will 
prove Theorem \ref{quantumcompletemain} in full generality. 
\end{remark} 

\begin{remark}
Taking into account the identification $ \FO(K(M_N(\mathbb{C}), \tr)) \cong \O_{N^2} $, the statement for complete quantum graphs in Theorem \ref{crossedproducts} 
is reminiscent of Takesaki-Takai duality. However, the isomorphisms are slightly different. 
Note also that the $ C^\ast $-algebras $ *_{N^2} C(S^1) $ and $ \FO(TM_N) $ are not even Morita equivalent, compare Theorem \ref{Ktheoryquantumtrivial}.  
\end{remark} 

\begin{remark}
Using the isomorphism from Theorem \ref{Ktheoryquantumtrivial} we see that $ \pi_N^T $ induces an 
embedding $ \ast_{N^2} C(S^1) \rightarrow M_N(\mathbb{C}) *_1 (C(S^1) \oplus \mathbb{C}) $. In the notation used above this maps the 
generators $ S^{(1)}_{kl} $ to $ \sum_{rs} \omega^{k(s - r)} e_{r - l,r} S e_{s, s - l} $, where $ S $ denotes the standard generator 
of $ C(S^1) \subset C(S^1) \oplus \mathbb{C} $.  
\end{remark}

\begin{remark} 
It seems natural to look at pairs of quantum Cuntz-Krieger algebras associated to quantum isomorphic quantum graphs  
beyond the cases considered in Theorem \ref{crossedproducts}. Finding ``small'' representations of linking algebras could potentially allow one 
to transfer properties like unitality, nuclearity, or existence of traces from one algebra to the other, without a priori knowing 
whether the algebras are isomorphic or not. 
\end{remark}

\section{The structure of complete quantum Cuntz-Krieger algebras} \label{secquantumcomplete}

In this final section we discuss our main result on the structure of complete quantum Cuntz-Krieger algebras, that is, we 
provide the proof of Theorem \ref{quantumcompletemain} stated in section \ref{secexamples}. 

Let us begin with a simple lemma.

\begin{lemma} \label{cuntz-sym}
Let $ A $ be a non-zero unital $ C^\ast $-algebra and let $ n_1, n \in \mathbb{N} $. Moreover assume that $ u = (u_{xy}) \in M_{n_1, n}(A) $ is a 
rectangular unitary matrix with coefficients in $ A $. Let $ s_x $ for $ 1 \leq x \leq n_1 $ be the standard generators of $ \O_{n_1} $ and 
define elements $ \hat{s}_y \in \O_{n_1} \otimes A $ for $ 1 \leq y \leq n $ by 
$$
\hat{s}_y = \sum_{x = 1}^{n_1} s_x \otimes u_{xy}.
$$ 
Then the elements $ \hat{s}_y $ satisfy the defining relations of $ \O_n $ and 
$$
C^\ast(\hat{s}_1, \ldots, \hat{s}_n) \cong \O_n.
$$
\end{lemma}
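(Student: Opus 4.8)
The plan is to first pin down what ``rectangular unitary'' must mean here: for a genuinely rectangular $ u \in M_{n_1,n}(A) $ to deserve the name unitary we need \emph{both} matrix identities $ u^\ast u = 1_n $ and $ u u^\ast = 1_{n_1} $, which together exhibit a unitary isomorphism of the free Hilbert $ A $-modules $ A^n \cong A^{n_1} $. (This is consistent with $ n \neq n_1 $ for suitable $ A $; for example $ u = (t_1, t_2) \in M_{1,2}(\O_2) $, built from the Cuntz isometries of $ \O_2 $, is a $ 1 \times 2 $ rectangular unitary.) With both identities available, I would verify directly that the $ \hat{s}_y $ form a Cuntz family, obtain a surjection $ \O_n \to C^\ast(\hat{s}_1, \dots, \hat{s}_n) $ from the universal property, and then upgrade it to an isomorphism using simplicity of $ \O_n $.

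For the relations, using $ s_x^\ast s_{x'} = \delta_{xx'} 1 $ in $ \O_{n_1} $ I would compute
\begin{align*}
\hat{s}_y^\ast \hat{s}_{y'} = \sum_{x,x'} s_x^\ast s_{x'} \otimes u_{xy}^\ast u_{x'y'} = \sum_x 1 \otimes u_{xy}^\ast u_{xy'} = 1 \otimes (u^\ast u)_{yy'} = \delta_{yy'}\, 1 \otimes 1,
\end{align*}
so each $ \hat{s}_y $ is an isometry (with unit $ 1 \otimes 1 $) and the ranges are mutually orthogonal. Dually, using $ \sum_x s_x s_x^\ast = 1 $,
\begin{align*}
\sum_y \hat{s}_y \hat{s}_y^\ast = \sum_{x,x'} s_x s_{x'}^\ast \otimes (u u^\ast)_{xx'} = \sum_x s_x s_x^\ast \otimes 1 = 1 \otimes 1,
\end{align*}
so the range projections sum to the unit. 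Hence $ \{\hat{s}_y\}_{y=1}^n $ satisfies exactly the defining relations of $ \O_n $. These two computations are entirely routine; their only subtle point is that \emph{both} unitarity conditions are genuinely used, one for each Cuntz relation.

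By the universal property of $ \O_n $ there is then a unique $ \ast $-homomorphism $ \rho: \O_n \to \O_{n_1} \otimes A $ with $ \rho(s_y) = \hat{s}_y $, and its image is precisely $ C^\ast(\hat{s}_1, \dots, \hat{s}_n) $, so $ \rho $ is a surjection onto the algebra in question. Since $ A \neq 0 $ we have $ \hat{s}_y^\ast \hat{s}_y = 1 \otimes 1 \neq 0 $, so $ \rho \neq 0 $; and for $ n \geq 2 $ the Cuntz algebra $ \O_n $ is simple, forcing the nonzero $ \ast $-homomorphism $ \rho $ to be injective and hence an isomorphism.

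The main obstacle is the degenerate case $ n = 1 $, where $ \O_1 = C(S^1) $ is \emph{not} simple and the argument above fails. Here the rectangular unitarity conditions force $ n_1 = 1 $ as well, so $ u \in A $ is an ordinary unitary and $ \hat{s}_1 = s_1 \otimes u $ is a single unitary in $ C(S^1) \otimes A $. I would finish this case by hand, checking that its spectrum is all of $ S^1 $ — since $ \bigcup_{\lambda \in S^1} \lambda \cdot \mathrm{sp}(u) = S^1 $ — so that $ C^\ast(\hat{s}_1) \cong C(S^1) = \O_1 $. In short, the whole content of the lemma is the interpretation of ``rectangular unitary'' plus the appeal to simplicity of $ \O_n $, with the circle case treated separately.
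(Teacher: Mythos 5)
Your main argument is exactly the paper's proof: both halves of the rectangular unitarity condition $u^\ast u = 1_n$, $u u^\ast = 1_{n_1}$ are used, one for each Cuntz relation, and the isomorphism onto $C^\ast(\hat{s}_1,\dots,\hat{s}_n)$ then follows from simplicity of $\O_n$. Your reading of ``rectangular unitary'' agrees with the paper, which verifies precisely these two identities for its matrix $u$ before invoking the lemma.

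The one genuine flaw is in your treatment of the case $n = 1$ (which the paper silently skips over, so raising it is fair): the claim that rectangular unitarity forces $n_1 = 1$ is false. For example, in $A = \O_{2}$ the column $u = (t_1^\ast, t_2^\ast)^{T} \in M_{2,1}(\O_2)$ built from the Cuntz isometries satisfies $u^\ast u = t_1 t_1^\ast + t_2 t_2^\ast = 1$ and $(u u^\ast)_{xy} = t_x^\ast t_y = \delta_{xy}1$, so it is a $2 \times 1$ rectangular unitary; more generally, $n_1 \times 1$ rectangular unitaries over $A$ correspond to copies of $\O_{n_1}$ inside $A$. So you cannot reduce to a single elementary tensor $s_1 \otimes u$. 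The case is still easily closed, and by an argument in the spirit of your spectrum computation: your relations show $\hat{s}_1$ is a unitary in $\O_{n_1} \otimes A$, and the gauge action $\gamma_\lambda(s_x) = \lambda s_x$ of $U(1)$ on $\O_{n_1}$ gives automorphisms $\gamma_\lambda \otimes \id$ carrying $\hat{s}_1$ to $\lambda \hat{s}_1$, so $\mathrm{sp}(\hat{s}_1)$ is a nonempty closed rotation-invariant subset of $S^1$, hence all of $S^1$, and $C^\ast(\hat{s}_1) \cong C(S^1) = \O_1$. With that repair your proof is complete and covers a case the paper's one-line appeal to simplicity does not literally address.
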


\begin{proof}
In order to verify the Cuntz relations we calculate
\begin{align*}
\hat s_z^*\hat s_y &= \sum_{x_1,x_2} s_{x_1}^*s_{x_2} \otimes u_{x_1z}^*u_{x_2y} \\
&= \sum_{x_1} 1 \otimes u_{x_1z}^*u_{x_1y} \\
&= \delta_{y,z}(1 \otimes 1)
\end{align*}
and 
\begin{align*}
\sum_y \hat{s}_y \hat{s}_y^* &= \sum_{y, x_1, x_2} s_{x_1} s_{x_2}^* \otimes u_{x_1y} u_{x_2y}^* \\
&= \sum_{x_1, x_2} s_{x_1} s_{x_2}^* \otimes \delta_{x_1,x_2}1 \\
&= 1 \otimes 1.   
\end{align*}
Since $ \O_n $ is simple this yields the claim. 	
\end{proof}

Now let us fix a complete quantum graph $ K(B,\psi) $ satisfying the hypotheses of Theorem \ref{quantumcompletemain}, 
that is, $ (B, \psi) $ is a finite quantum space in standard form such that $ \psi: B \rightarrow \mathbb{C} $ is a $ \delta $-form 
with $ \delta^2 \in \mathbb{N} $. We shall use the same notation that as after Definition \ref{deffqs}, so 
that $ B = \bigoplus_{a = 1}^d M_{N_a}(\mathbb{C}) $ and $ \psi(x) = \sum_{a = 1}^d \Tr(Q_{(a)} x_i) $ for $ x = (x_1, \dots, x_d) $. 

By Remark \ref{wealth}, we have a quantum isomorphism $ K(B,\psi) \cong_q K_{\delta^2} $. 
Denote by $ v^x_{ija} $ for $ 1 \leq a \leq d, 1 \leq i,j \leq N_a, 1 \leq x \leq \delta^2 $ the standard generators of 
the $ C^\ast $-algebra $ A = C(G^+(K_{\delta^2}, K(B,\psi))) $ given in Proposition \ref{qisorelations}.  
Moreover let $ n = \dim(B) $ and consider the rectangular matrix $ u = (u_{ija}^x) \in M_{\delta^2,n}(A) $ given by 
$$
u_{ija}^x = (Q_{(a)})^{-1/2}_{jj} \delta^{-1} v_{ija}^x.
$$ 
Using the relations in Proposition \ref{qisorelations} one obtains 
\begin{align*} 
(u^*u)_{ija, klb} &= \sum_x (Q_{(a)})^{-1/2}_{jj} \delta^{-1} (v_{ija}^x)^* (Q_{(b)})^{-1/2}_{ll} \delta^{-1} v_{klb}^x \\
&= \sum_x (Q_{(a)})^{-1/2}_{jj} (Q_{(b)})^{-1/2}_{ll} \delta^{-2} v_{jia}^x v_{klb}^x \\
&= \sum_x (Q_{(a)})^{-1/2}_{jj} (Q_{(a)})^{-1/2}_{ll} \delta^{-2} \delta_{ab} \delta_{ik} v_{jla}^x \\
&= \sum_x (Q_{(a)})^{-1/2}_{jj} (Q_{(a)})^{-1/2}_{ll} (Q_{(a)})_{jj} \delta_{ab} \delta_{ik} \delta_{jl} \\
&= \delta_{ab} \delta_{ik} \delta_{jl} 
\end{align*} 
and 
\begin{align*}
(uu^*)_{xy} &= \sum_{ija} (Q_{(a)})^{-1/2}_{jj}\delta^{-1} v_{ija}^x (Q_{(a)})^{-1/2}_{jj}\delta^{-1} (v_{ija}^y)^* \\
&= \sum_{ija} (Q_{(a)})^{-1}_{jj} \delta^{-2} v_{ija}^x v_{jia}^y \\
&= \sum_{ia} \delta_{xy} v_{iia}^x \\
&= \delta_{xy}.
\end{align*} 
We conclude that $ u^*u = 1_{M_{n}(A)} $ and $ uu^* = 1_{M_{\delta^2}(A)} $, or equivalently, that $ u $ is unitary.  

Next, we consider the $ \ast $-homomorphism $ \hat{\beta}: \FO(K(B,\psi)) \rightarrow \O_{\delta^2} \otimes A $ from Theorem \ref{qisolift}, 
which satisfies 
$$
\hat{\beta}(S(e_{ij}^{(a)})) = \sum_{x} S(e_x) \otimes  v_{jia}^x
$$
in terms of the standard matrix units. Equivalently, if we write $ S_{ij}^{(a)} = S(f_{ij}^{(a)}) $, 
where $ f_{ij}^{(a)} = (Q_{(a)})^{-1/2}_{ii} e^{(a)}_{ij} (Q_{(a)})_{jj}^{-1/2}$ are the adapted matrix units for $ (B,\psi) $, and $ s_x = S(\delta^2e_x) $ 
for the canonical Cuntz isometries generating $ \O_{\delta^2} $, then 
\begin{align*}
\hat{\beta}(S_{ij}^{(a)}) &=(Q_{(a)})^{-1/2}_{ii}(Q_{(a)})^{-1/2}_{jj}\delta^{-2}\sum_{x} s_x \otimes v_{ija}^x\\
&= (Q_{(a)})_{ii}^{-1/2} \delta^{-1} \sum_{x} s_x \otimes u^x_{ija}.
\end{align*}
Hence the unitarity of the matrix $ u = (u_{ija}^x) $ combined with Lemma \ref{cuntz-sym} implies that the 
elements $ (Q_{(a)})^{1/2}_{ii} \delta \hat{\beta}(S_{ij}^{(a)}) $ form an $ n $-tuple of Cuntz isometries in $ \O_{\delta^2} \otimes A $. 
According to Remark \ref{remhatinjective}, the restriction of $ \hat{\beta} $ to the $ \ast $-algebra generated by the $ S_{ij}^{(a)} $ is injective.  
This shows that $ \FO(K(B,\psi)) $ is unital with unit 
$$ 
e = \sum_{ija} (Q_{(a)})_{ii} \delta^2 S_{ij}^{(a)}(S_{ij}^{(a)})^*,  
$$  
and that the elements $ (Q_{(a)})^{1/2}_{ii} \delta S_{ij}^{(a)} $ form an $ n $-tuple of Cuntz isometries generating $ \FO(K(B,\psi)) $. 
This completes the proof of Theorem \ref{quantumcompletemain}.

\begin{remark}
It seems reasonable to expect that $ \FO(K(B,\psi)) \cong \O_n $ for all choices of $ \delta $-forms $ \psi $, but we are unable to supply a proof. 
Note that when $ \delta^2 \notin \mathbb{N} $, we no longer have a quantum isomorphism between $ K(B,\psi) $ and a classical complete graph, and 
therefore a different approach would be needed. 
\end{remark}

\bibliographystyle{hacm}

\bibliography{cvoigt}

\def\cprime{$'$} \def\cprime{$'$} \def\cprime{$'$} \def\cprime{$'$}
  \def\cprime{$'$} \def\cprime{$'$}
  \def\polhk#1{\setbox0=\hbox{#1}{\ooalign{\hidewidth
  \lower1.5ex\hbox{`}\hidewidth\crcr\unhbox0}}} \def\cprime{$'$}
  \def\cprime{$'$} \def\cprime{$'$} \def\Dbar{\leavevmode\lower.6ex\hbox to
  0pt{\hskip-.23ex \accent"16\hss}D}
  \def\cftil#1{\ifmmode\setbox7\hbox{$\accent"5E#1$}\else
  \setbox7\hbox{\accent"5E#1}\penalty 10000\relax\fi\raise 1\ht7
  \hbox{\lower1.15ex\hbox to 1\wd7{\hss\accent"7E\hss}}\penalty 10000
  \hskip-1\wd7\penalty 10000\box7}
  \def\cfudot#1{\ifmmode\setbox7\hbox{$\accent"5E#1$}\else
  \setbox7\hbox{\accent"5E#1}\penalty 10000\relax\fi\raise 1\ht7
  \hbox{\raise.1ex\hbox to 1\wd7{\hss.\hss}}\penalty 10000 \hskip-1\wd7\penalty
  10000\box7}
\begin{thebibliography}{10}

\bibitem{ARcorners}
{\sc Arklint, S.~E., and Ruiz, E.}
\newblock Corners of {C}untz-{K}rieger algebras.
\newblock {\em Trans. Amer. Math. Soc. 367}, 11 (2015), 7595--7612.

\bibitem{Banicageneric}
{\sc Banica, T.}
\newblock Symmetries of a generic coaction.
\newblock {\em Math. Ann. 314}, 4 (1999), 763--780.

\bibitem{Banicafusscatalan}
{\sc Banica, T.}
\newblock Quantum groups and {F}uss-{C}atalan algebras.
\newblock {\em Comm. Math. Phys. 226}, 1 (2002), 221--232.

\bibitem{Banicaqutgraph}
{\sc Banica, T.}
\newblock Quantum automorphism groups of homogeneous graphs.
\newblock {\em J. Funct. Anal. 224}, 2 (2005), 243--280.

\bibitem{BSliberation}
{\sc Banica, T., and Speicher, R.}
\newblock Liberation of orthogonal {L}ie groups.
\newblock {\em Adv. Math. 222}, 4 (2009), 1461--1501.

\bibitem{BdRV}
{\sc Bichon, J., De~Rijdt, A., and Vaes, S.}
\newblock Ergodic coactions with large multiplicity and monoidal equivalence of
  quantum groups.
\newblock {\em Comm. Math. Phys. 262}, 3 (2006), 703--728.

\bibitem{BCEHPSWbigalois}
{\sc Brannan, M., Chirvasitu, A., Eifler, K., Harris, S., Paulsen, V., Su, X.,
  and Wasilewski, M.}
\newblock Bigalois extensions and the graph isomorphism game.
\newblock {\em Comm. Math. Phys. 375}, 3 (2020), 1777--1809.

\bibitem{Brownext}
{\sc Brown, L.~G.}
\newblock Ext of certain free product {$C^{\ast} $}-algebras.
\newblock {\em J. Operator Theory 6}, 1 (1981), 135--141.

\bibitem{Cuntzalgebras}
{\sc Cuntz, J.}
\newblock Simple {$C\sp*$}-algebras generated by isometries.
\newblock {\em Comm. Math. Phys. 57}, 2 (1977), 173--185.

\bibitem{Cuntzmarkov2}
{\sc Cuntz, J.}
\newblock A class of {$C^{\ast} $}-algebras and topological {M}arkov chains.
  {II}. {R}educible chains and the {E}xt-functor for {$C^{\ast} $}-algebras.
\newblock {\em Invent. Math. 63}, 1 (1981), 25--40.

\bibitem{Cuntzfreeproduct}
{\sc Cuntz, J.}
\newblock The {$K$}-groups for free products of {$C^{\ast} $}-algebras.
\newblock In {\em Operator algebras and applications, {P}art {I} ({K}ingston,
  {O}nt., 1980)}, vol.~38 of {\em Proc. Sympos. Pure Math.} Amer. Math. Soc.,
  Providence, R.I., 1982, pp.~81--84.

\bibitem{CKalgebras}
{\sc Cuntz, J., and Krieger, W.}
\newblock A class of {$C^{\ast} $}-algebras and topological {M}arkov chains.
\newblock {\em Invent. Math. 56}, 3 (1980), 251--268.

\bibitem{dRV}
{\sc De~Rijdt, A., and Vander~Vennet, N.}
\newblock Actions of monoidally equivalent compact quantum groups and
  applications to probabilistic boundaries.
\newblock {\em Ann. Inst. Fourier (Grenoble) 60}, 1 (2010), 169--216.

\bibitem{DSWnoncommutativegraphs}
{\sc Duan, R., Severini, S., and Winter, A.}
\newblock Zero-error communication via quantum channels, noncommutative graphs,
  and a quantum {L}ov\'{a}sz number.
\newblock {\em IEEE Trans. Inform. Theory 59}, 2 (2013), 1164--1174.

\bibitem{EnomotoWatatanigraph}
{\sc Enomoto, M., and Watatani, Y.}
\newblock A graph theory for {$C^{\ast} $}-algebras.
\newblock {\em Math. Japon. 25}, 4 (1980), 435--442.

\bibitem{EKSrankonesubspaces}
{\sc Erdos, J.~A., Katavolos, A., and Shulman, V.~S.}
\newblock Rank one subspaces of bimodules over maximal abelian selfadjoint
  algebras.
\newblock {\em J. Funct. Anal. 157}, 2 (1998), 554--587.

\bibitem{ExelLacaalgebras}
{\sc Exel, R., and Laca, M.}
\newblock Cuntz-{K}rieger algebras for infinite matrices.
\newblock {\em J. Reine Angew. Math. 512\/} (1999), 119--172.

\bibitem{FimaGermainamalgamated}
{\sc Fima, P., and Germain, E.}
\newblock The {KK}-theory of amalgamated free products.
\newblock {\em Adv. Math. 369\/} (2020), 107174, 35.

\bibitem{GHJteleportation}
{\sc Gao, L., Harris, S.~J., and Junge, M.}
\newblock Quantum teleportation and super-dense coding in operator algebras.
\newblock arXiv 1709.02785.

\bibitem{Germainfreeproduct}
{\sc Germain, E.}
\newblock {$KK$}-theory of the full free product of unital {$C^*$}-algebras.
\newblock {\em J. Reine Angew. Math. 485\/} (1997), 1--10.

\bibitem{KPhigherrankgraphcstar}
{\sc Kumjian, A., and Pask, D.}
\newblock Higher rank graph {$C^\ast$}-algebras.
\newblock {\em New York J. Math. 6\/} (2000), 1--20.

\bibitem{KPRck}
{\sc Kumjian, A., Pask, D., and Raeburn, I.}
\newblock Cuntz-{K}rieger algebras of directed graphs.
\newblock {\em Pacific J. Math. 184}, 1 (1998), 161--174.

\bibitem{MRSrepresentationsck}
{\sc Mann, M.~H., Raeburn, I., and Sutherland, C.~E.}
\newblock Representations of finite groups and {C}untz-{K}rieger algebras.
\newblock {\em Bull. Austral. Math. Soc. 46}, 2 (1992), 225--243.

\bibitem{MRplanar}
{\sc Man\v{c}inska, L., and Roberson, D.}
\newblock Quantum isomorphism is equivalent to equality of homomorphism counts
  from planar graphs.
\newblock arXiv 1910.06958.

\bibitem{McClanahanunitarymatrix}
{\sc McClanahan, K.}
\newblock {$C^*$}-algebras generated by elements of a unitary matrix.
\newblock {\em J. Funct. Anal. 107}, 2 (1992), 439--457.

\bibitem{Mrozinskiso3deformations}
{\sc Mrozinski, C.}
\newblock Quantum automorphism groups and {$SO(3)$}-deformations.
\newblock {\em J. Pure Appl. Algebra 219}, 1 (2015), 1--32.

\bibitem{MRVmorita}
{\sc Musto, B., Reutter, D., and Verdon, D.}
\newblock The {M}orita theory of quantum graph isomorphisms.
\newblock {\em Comm. Math. Phys. 365}, 2 (2019), 797--845.

\bibitem{NTlecturenotes}
{\sc Neshveyev, S., and Tuset, L.}
\newblock {\em Compact quantum groups and their representation categories},
  vol.~20 of {\em Cours Sp\'{e}cialis\'{e}s [Specialized Courses]}.
\newblock Soci\'{e}t\'{e} Math\'{e}matique de France, Paris, 2013.

\bibitem{Raeburngraph}
{\sc Raeburn, I.}
\newblock {\em Graph algebras}, vol.~103 of {\em CBMS Regional Conference
  Series in Mathematics}.
\newblock Published for the Conference Board of the Mathematical Sciences,
  Washington, DC; by the American Mathematical Society, Providence, RI, 2005.

\bibitem{Shopfgalois}
{\sc Schauenburg, P.}
\newblock Hopf-{G}alois and bi-{G}alois extensions.
\newblock In {\em Galois theory, {H}opf algebras, and semiabelian categories},
  vol.~43 of {\em Fields Inst. Commun.} Amer. Math. Soc., Providence, RI, 2004,
  pp.~469--515.

\bibitem{SchmidtWeberqsym}
{\sc Schmidt, S., and Weber, M.}
\newblock Quantum symmetries of graph {$C^*$}-algebras.
\newblock {\em Canad. Math. Bull. 61}, 4 (2018), 848--864.

\bibitem{Tomfordeunified}
{\sc Tomforde, M.}
\newblock A unified approach to {E}xel-{L}aca algebras and {$C^\ast$}-algebras
  associated to graphs.
\newblock {\em J. Operator Theory 50}, 2 (2003), 345--368.

\bibitem{Wangqsymmetry}
{\sc Wang, S.}
\newblock Quantum symmetry groups of finite spaces.
\newblock {\em Comm. Math. Phys. 195}, 1 (1998), 195--211.

\bibitem{Weaverquantumrelations}
{\sc Weaver, N.}
\newblock Quantum relations.
\newblock {\em Mem. Amer. Math. Soc. 215}, 1010 (2012), v--vi, 81--140.

\bibitem{Wernerteleportation}
{\sc Werner, R.~F.}
\newblock All teleportation and dense coding schemes.
\newblock vol.~34. 2001, pp.~7081--7094.
\newblock Quantum information and computation.

\bibitem{Woronowiczleshouches}
{\sc Woronowicz, S.~L.}
\newblock Compact quantum groups.
\newblock In {\em Sym\'etries quantiques ({L}es {H}ouches, 1995)}.
  North-Holland, Amsterdam, 1998, pp.~845--884.

\end{thebibliography}

\end{document}